\title{Equivalence between Differential Inclusions Involving Prox-regular sets and maximal monotone operators}
\newtheorem{theorem}{Theorem}[section]
\newtheorem{corollary}[theorem]{Corollary}
\newtheorem{lmm}[theorem]{Lemma}
\newtheorem{prpstn}[theorem]{Proposition}
\newtheorem{dfntn}[theorem]{Definition}
\newtheorem{xmpl}[theorem]{Example}
\newtheorem{remark}[theorem]{Remark}
\def\beq{\begin{equation}}
\def\eeq{\end{equation}}
\def\baq{\begin{eqnarray}}
\def\eaq{\end{eqnarray}}
\def\baqn{\begin{eqnarray*}}
\def\eaqn{\end{eqnarray*}}
\def\image #1 (#2,#3) (echelle #4) #5{
\dimen2=#2
\dimen3=#3
\divide \dimen2 by 1000
\multiply \dimen2 by #4
\divide \dimen3 by 1000
\multiply \dimen3 by #4
\setbox1 =\vbox to \dimen2{\hsize=\dimen3\vfill\special{picture #1
scaled #4}}
\vbox{\hsize=\dimen3\box1\medskip\centerline{#5}}
}
\begin{document}
\author{
Samir {\sc Adly}\thanks{Laboratoire XLIM, Universit\'e de Limoges, France. Email: samir.adly@unilim.fr.}, Abderrahim \textsc{Hantoute}\thanks{Center for Mathematical Modeling (CMM), Universidad de Chile. Email: ahantoute@dim.uchile.cl.}, Bao Tran \textsc{Nguyen}\thanks{Universidad de O'Higgins, Rancagua, Chile. Email: nguyenbaotran31@gmail.com }
}

\maketitle
\begin{abstract}\noindent In this paper, we study the existence and the stability in the
sense of Lyapunov of solutions for\ differential inclusions
governed by the normal cone to a prox-regular set and subject to a
Lipschitzian perturbation. We prove that such, apparently, more
general nonsmooth dynamics can be indeed remodelled into the
classical theory of differential inclusions involving maximal
monotone operators. This  result is new in the literature and
permits us to make use of the rich and abundant achievements in
this class of monotone operators to derive the desired existence
result and stability analysis, as well as the continuity and
differentiability properties of the solutions. This going back and
forth between these two models of differential inclusions is made
possible thanks to a viability result for maximal monotone
operators. As an application, we study a Luenberger-like observer,
which is shown to converge exponentially to the actual state when
the initial value of the state's estimation remains in a
neighborhood of the initial value of the original system.\end{abstract}

\noindent {\bf Keywords} Differential inclusions, prox-regular sets, maximal monotone operators, Lyapunov
functions, $a$-Lyapunov pairs, invariant sets, observer designs. \\
\noindent {\bf AMS subject classifications} 34A60, 34D05, 37B25, 49J15, 47H05, 49J52,
  93B05.
\setcounter{tocdepth}{1}
\tableofcontents


\section{Introduction}
We consider in this paper the existence and the stability in the
sense of Lyapunov of solutions  of\ the following differential
inclusion, given in a Hilbert space $H,$
\begin{equation}
\begin{cases}
\dot{x}(t)\in f(x(t))-\mathrm{N}_{C}(x(t)) & \,\text{for almost every }\,t\geq0, \\
x(0;x_{0})=x_{0}\in C, &
\end{cases}
\label{101}
\end{equation}
where $\mathrm{N}_{C}$ is the normal cone to an $r$-uniformly
prox-regular closed subset $C$ of $H.$ The dynamical system driven
by the set $C$ is subject to a Lipschitz continuous perturbation
mapping $f$ defined on $H.$ For a given initial condition $x_0\in
C$, by a solution of $(\ref{101})$ we mean an absolutely
continuous function $x(\cdot;x_{0}):[0,+%
\infty)\rightarrow H,$\ with\ $x(0;x_{0})=x_{0},$ which satisfies $(\ref{101})$ for
almost every (a.e.) $t\geq0;$ hence, in particular, $x(t)\in C$ for all $%
t\geq0$ (since the normal cone is empty outside the set $C$).
Indeed, such a solution is necessarily Lipschitz continuous on
each
interval of the form $[0,T]$ for $T\ge 0$ (see Theorem \ref{theo3.1}). Differential inclusion (\ref%
{101}) appears in the modeling of many concrete problems in economics,
unilateral mechanics, electrical engineering as well as optimal control (see
eg. \cite{abb}, \cite{brogli4}, \cite{CP}, \cite{MT2}, \cite{stewart} and references therein.)

It was recently shown in \cite{MT1} and \cite{MT2} that $(\ref{101})$ has
one and only one (absolutely continuous) solution, which satisfies the
imposed initial condition. These authors employed a regularization approach
based on the Moreau-Yosida approximation, and use the nice properties of
uniform prox-regularity to show that the approximate scheme converges to the
required solution. In this way, such an approach repeats those\ arguments of
approximation ideas which, previously, were extensively used in the setting
of differential inclusions with maximal monotone operators.

Problems dealing with the stability of solutions of $(\ref{101})$, namely
the characterization of weakly lower semi-continuous Lyapunov pairs and
functions, have been developed in \cite{MH} following the same strategy,
also based on Moreau-Yosida approximations. Most of works on these problems
use indeed this natural approximation approach; see, e.g. \cite{MT1,MT2,MH}.

In this paper, at a first glance we provide a different, but quite direct,
approach to tackle this problem. We prove that problem $(\ref{101})$ can be
equivalently written as a differential inclusion given in the current
Hilbert setting under the form
\begin{equation}
\begin{cases}
\dot{x}(t)\in g(x(t))-A(x(t)) & \,\text{a.e. \, }\,t\in\lbrack0,T], \\
x(0;x_{0})=x_{0}\in\text{dom}A, &
\end{cases}
\label{Maxmoninclusion}
\end{equation}
where $A:H \rightrightarrows H$ is an appropriate (depending on
$C$) maximal monotone operator defined on $H,$ and $g:H\to H$ is a
Lipschitz continuous mapping. Then, it will be sufficient to apply
the classical theory of maximal monotone operators (\cite{B}; see,
also, \cite{AHT1, AHT2}) to analyze the existence and the
stability of solutions for differential inclusion $(\ref{101})$.
The concept of invariant sets will be the key tool to go back and
forth between inclusions $(\ref{101})$ and
(\ref{Maxmoninclusion}). Invariant sets with respect to
differential inclusions governed by maximal monotone operators
have been studied and characterized in \cite{AHT1, AHT2}. Other
references for invariant sets, also referred to as viable sets,
and the related theory of Lyapunov stability are \cite{AC, CM, KS,
P} among others. We also refer to \cite{DRW} for an interesting
criterion for weakly invariant sets, which is established in the
finite-dimensional setting for differential inclusions governed by
one-sided Lipschitz multivalued mappings with nonempty convex and
compact values. This result has been used in \cite{CP}, always in
finite dimensions, to provide weakly and strongly invariance
criteria for closed sets with respect to more general differential
inclusions where the set $C$ in (\ref{101}) is time dependent and
$f$ is a Lipschitzian multivalued mapping.

We shall also provide different criteria for the so-called $a$-Lyapunov pairs of lower
semi-continuous functions to extend some of the results given in \cite{AHT1,
AHT2, MH} to the current setting. It is worth to observe that the assumption
of uniformly prox-regularity is required to obtain global solutions of $(\ref%
{101})$, which are defined on the whole interval $[0,T].$ However, our
analysis also works in the same way when the set $C$ is prox-regular at $%
x_{0}$ rather than being a uniformly prox-regular set; but, in this case, we
only obtain a local solution defined around $x_0.$

This paper is organized as follows. After giving the necessary notations and
preliminary results in Section 2, we review and study in Section 3 different
aspects of the theory of differential inclusions governed by maximal
monotone operators, including the existence of solutions, and we provide a
stability results dealing with the invariance of closed sets with respect to
such differential inclusions. In Sections 4, we provide the new proof of the
existence of solutions for differential inclusions \ involving normal cones
to $r$-uniformly prox-regular sets. Section 5 is devoted to the
characterization of lower semi-continuous $a$-Lyapunov pairs and functions.
Inspired from the recent paper \cite{TBP}, we give in section 6 an
application of our result to a Luenberger-like observer.
\section{Preliminaries and examples}

\subsection{Preliminary results}

In this paper, $H$ is a Hilbert space endowed with an inner product $%
\langle\cdot,\cdot\rangle$ and an associated norm $||\cdot||.$ The strong
and weak convergences in $H$ are denoted by $\rightarrow$ and $%
\rightharpoonup,$ resp. We denote by $\mathrm{B}(x,\rho)$ the closed ball centered at
$x\in H$ of radius $\rho>0,$ and particularly we use $\mathbb{B}$ for the
closed unit ball. The null vector in $H$ is written $0$.
Given a set $S\subset H,$ by $\text{co}S,$ $\text{cone}S$ and $\,\overline{S}
$ we respectively denote the \textit{convex hull, the conic hull} and the
\textit{closure} of $S.$ The dual cone of $S$ is the set
\begin{equation*}
S^{\ast}:=\{x^{\ast}\in H\mid\left\langle x^{\ast},x\right\rangle \leq0\text{
for all }x\in S\}.
\end{equation*}
The \textit{indicator }and the \textit{distance functions} are respectively
given by
\begin{equation*}
\mathrm{I}_{S}(x):=0\,\,\,\text{if}\,\,x\in S;\,+\infty\,\,\text{otherwise},%
\,\,\,\,\,\,d_{S}(x):=\inf\{||x-y||:\,\,y\in S\}
\end{equation*}
(in the sequel we shall adopt the convention $\inf\nolimits_{\emptyset
}=+\infty$). We shall write $\overset{S}{\rightharpoonup}$ for the
convergence when restricted to the set $S.$ We denote $%
\Pi_{S}$ the (\textit{orthogonal) } \textit{projection
mapping} onto $S$ defined as
\begin{equation*}
\Pi_{S}(x):=\{y\in S:\,\,||x-y||= d_{S}(x)\}.
\end{equation*}
 It is known that $\Pi_{S}$ is nonempty-valued on a dense subset of $H\setminus S$ (see e.g. \cite%
{C}).

For an extended real-valued function $\varphi:H\rightarrow\overline{\mathbb{R%
}},$ we denote $\text{dom}\varphi:=\{x\in H\mid\varphi(x)<+\infty\}$ and $%
\text{epi}\varphi:=\{(x,\alpha)\in H\times\mathbb{R}\,|\,\,\varphi
(x)\leq\alpha\}.$ Function $\varphi$ is lower semi-continuous (lsc) if $%
\text{epi}\varphi$ is closed. The \textit{contingent directional derivative}
of $\varphi$ at $x\in\text{dom}\varphi$ in the direction $v\in H$ is
\begin{equation*}
\varphi^{\prime}(x,v):=\underset{t\rightarrow0^{+},w\rightarrow v}{\lim\inf }%
\frac{\varphi(x+tw)-\varphi(x)}{t}.
\end{equation*}
A vector $\xi\in H$ is called a \textit{proximal subgradient }of $\varphi$
at $x\in H$, written $\xi\in\partial_{P}\varphi(x),$ if there are $\rho>0$
and $\sigma\geq0$ such that
\begin{equation*}
\varphi(y)\geq\varphi(x)+\langle\xi,y-x\rangle-\sigma||y-x||^{2},\,\,\,%
\forall\,\,y\in B_{\rho}(x);
\end{equation*}
a \textit{Fr\'echet subgradient} of $\varphi$ at $x$, written $\xi\in
\partial_{F}\varphi(x),$ if
\begin{equation*}
\varphi(y)\geq\varphi(x)+\langle\xi,y-x\rangle+o(||y-x||),\,\,\forall\,\,y%
\in H;
\end{equation*}
and a \textit{basic (or Limiting) subdifferential} of $\varphi$ at $x,$
written $\xi\in\partial_{L}\varphi(x),$ if there exist sequences $%
(x_{k})_{k} $ and $(\xi_{k})_{k}$ such that
\begin{equation*}
x_{k}\overset{\varphi}{\rightarrow}x,\,\,(\text{i.e., }x_{k}\rightarrow x%
\text{ and }\varphi(x_{k})\rightarrow\varphi(x)),\,\,\xi_{k}\in\partial
_{P}\varphi(x_{k}),\,\,\xi_{k}\rightharpoonup\xi.
\end{equation*}
If $x\notin\text{dom}\varphi,$ we write $\partial_{P}\varphi
(x)=\partial_{F}\varphi(x)=\partial_{L}\varphi(x)=\emptyset.$ In particular,
if $S$ is a closed set and $s\in S,$ we define the \textit{proximal normal
cone} to $S$ at $s$ as $\mathrm{N}_{S}^{P}(s)=\partial_{P}\mathrm{I}_{S}(s),$
the \textit{Fr\'echet normal} to $S$ at $s$ as\ $\mathrm{N}%
_{S}^{F}(s)=\partial_{F}\mathrm{I}_{S}(s),$ the \textit{limiting
normal cone} to $S$ at $s$ as
$\mathrm{N}_{S}^{L}(s)=\partial_{L}\mathrm{I}_{S}(s),$ and
\textit{the Clarke\ normal cone} to $S$ at $s$ as $\mathrm{N}_{S}^{C}(s)=%
\overline{\text{co}}\mathrm{N}_{S}^{L}(s).$ Equivalently, we have that $%
\mathrm{N}_{S}^{P}(s)=\text{cone}(\Pi_{S}^{-1}(s)-s),$ where $%
\Pi_{S}^{-1}(s):=\{x\in H\mid s\in\Pi_{S}(x)\}.$ The Bouligand and weak
\textit{Bouligand tangent cones to }$S$ at $x$ are defined as
\begin{align*}
T_{S}^{B}(x) & :=\left\{ v\in H\,|\,\exists\,\,x_{k}\in S,\exists
\,\,t_{k}\rightarrow0,\,\,\text{st}\,\,t_{k}^{-1}(x_{k}-x)\rightarrow v%
\mbox{ as } k\to+\infty\right\} \\
T_{S}^{\text{w}}(x) & :=\left\{ v\in H\,|\,\exists\,\,x_{k}\in
S,\exists\,\,t_{k}\rightarrow0,\,\,\text{st}\,\,t_{k}^{-1}(x_{k}-x)%
\rightharpoonup v \mbox{ as } k\to+\infty\right\} ,\text{ resp.}
\end{align*}
We also define the \textit{Clarke subgradient} of $\varphi$ at $x,$ written $%
\partial_{C}\varphi(x)$,$\ $as the vectors $\xi\in H$ such that $(\xi ,-1)\in%
\mathrm{N}_{\text{epi}\varphi}^{C}(x,\varphi(x))$, and the \textit{singular
subgradient} of $\varphi$ at $x,$ written $\partial_{\infty }\varphi(x),\ $%
as the vectors $\xi\in H$ such that $(\xi,0)\in\mathrm{N}_{\text{epi}%
\varphi}^{P}(x,\varphi(x));$ in particular, if $\xi
\in\partial_{\infty}\varphi(x),$ then\ there are sequences $x_{k}\overset{%
\varphi}{\rightarrow}x,\,\xi_{k}\in\partial_{P}\varphi(x_{k}),\,$and $%
\lambda_{k}\rightarrow0^{+}$ such that $\lambda_{k}\xi_{k}\rightarrow\xi.$
Observe that $\partial_{P}\varphi(x)\subset\partial_{F}\varphi(x)\subset
\partial_{L}\varphi(x)\subset\partial_{C}\varphi(x).$ For all these concepts
and their properties we refer to the book \cite{M}.

We shall frequently use the following version of Gronwall's Lemma:

\begin{lmm}
\label{l2}\rm{(Gronwall's Lemma; see, e.g., \cite{AB})} Let $T>0\ $and
$a,b\in L^{1}(t_{0},t_{0}+T;\mathbb{R})$ such that\ $b(t)\geq0$ a.e.
$t\in\lbrack t_{0},t_{0}+T].$ If, for some $0\leq\alpha<1$, an absolutely
continuous function $w:[t_{0},t_{0}+T]\rightarrow\mathbb{R}_{+}$ satisfies
\[
(1-\alpha)w^{\prime}(t)\leq a(t)w(t)+b(t)w^{\alpha}(t)\,\,\ \text{a.e.}%
\,\,t\in\lbrack t_{0},t_{0}+T],
\]
then
\[
w^{1-\alpha}(t)\leq w^{1-\alpha}(t_{0})e^{\int_{t_{0}}^{t}a(\tau)d\tau}%
+\int_{t_{0}}^{t}e^{\int_{s}^{t}a(\tau)d\tau}b(s)ds,\,\,\forall t\in\lbrack
t_{0},t_{0}+T].
\]

\end{lmm}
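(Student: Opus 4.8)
\medskip
\noindent\textbf{Proof strategy.}
The plan is to reduce the nonlinear differential inequality to the classical linear Gronwall inequality via the substitution $v=w^{1-\alpha}$, while dealing carefully with the fact that $s\mapsto s^{1-\alpha}$ fails to be Lipschitz at the origin when $\alpha\in(0,1)$. Since $w$ is absolutely continuous on $[t_0,t_0+T]$, it is bounded, say $0\le w(t)\le K$. For $\varepsilon>0$ I would set $w_\varepsilon:=w+\varepsilon$, so that $\varepsilon\le w_\varepsilon\le K+\varepsilon$, and $v_\varepsilon:=w_\varepsilon^{\,1-\alpha}$. Because $s\mapsto s^{1-\alpha}$ is $C^1$, hence Lipschitz, on the compact interval $[\varepsilon,K+\varepsilon]$, and $w_\varepsilon$ is absolutely continuous, the composition $v_\varepsilon$ is absolutely continuous on $[t_0,t_0+T]$ and $v_\varepsilon'(t)=(1-\alpha)\,w_\varepsilon(t)^{-\alpha}\,w'(t)$ for a.e.\ $t$.

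Next I would estimate $v_\varepsilon'$. Multiplying the hypothesis by $w_\varepsilon(t)^{-\alpha}>0$ gives, for a.e.\ $t\in[t_0,t_0+T]$,
\[
v_\varepsilon'(t)\le w_\varepsilon(t)^{-\alpha}\bigl(a(t)w(t)+b(t)w(t)^{\alpha}\bigr).
\]
Using the identity $w_\varepsilon^{-\alpha}w=w_\varepsilon^{-\alpha}(w_\varepsilon-\varepsilon)=v_\varepsilon-\varepsilon\,w_\varepsilon^{-\alpha}$, the bound $w_\varepsilon^{-\alpha}w^{\alpha}\le 1$ (valid since $0\le\alpha<1$, $b\ge0$ and $0\le w\le w_\varepsilon$), and $0\le w_\varepsilon^{-\alpha}\le\varepsilon^{-\alpha}$, one reaches
\[
v_\varepsilon'(t)\le a(t)v_\varepsilon(t)+b(t)+|a(t)|\,\varepsilon^{\,1-\alpha}\qquad\text{a.e.\ }t\in[t_0,t_0+T],
\]
the last term absorbing $-a(t)\varepsilon\,w_\varepsilon(t)^{-\alpha}$ regardless of the sign of $a(t)$. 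Then the classical linear Gronwall argument applies: multiplying by the integrating factor $e^{-\int_{t_0}^{t}a(\tau)d\tau}$ turns the left-hand side into the derivative of $t\mapsto e^{-\int_{t_0}^{t}a(\tau)d\tau}v_\varepsilon(t)$, and integration from $t_0$ to $t$ gives
\[
v_\varepsilon(t)\le v_\varepsilon(t_0)\,e^{\int_{t_0}^{t}a(\tau)d\tau}+\int_{t_0}^{t}e^{\int_{s}^{t}a(\tau)d\tau}\bigl(b(s)+|a(s)|\varepsilon^{\,1-\alpha}\bigr)ds.
\]
Finally I would let $\varepsilon\downarrow0$: pointwise $v_\varepsilon(t)\to w(t)^{1-\alpha}$ and $v_\varepsilon(t_0)\to w(t_0)^{1-\alpha}$, while $\varepsilon^{\,1-\alpha}\int_{t_0}^{t}e^{\int_{s}^{t}a(\tau)d\tau}|a(s)|\,ds\to0$ since $s\mapsto e^{\int_{s}^{t}a(\tau)d\tau}$ is bounded on $[t_0,t_0+T]$ and $a\in L^{1}$. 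This yields the asserted estimate.

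The one delicate point is the regularization step: for $\alpha\in(0,1)$ the function $w^{1-\alpha}$ need not be absolutely continuous at times where $w$ vanishes, so it cannot be differentiated directly; the shift $w\mapsto w+\varepsilon$ restores absolute continuity, and the real work is to check that the error term created by the shift is dominated by $|a|\,\varepsilon^{\,1-\alpha}$ uniformly (via $w_\varepsilon\ge\varepsilon$) and vanishes in $L^1$ as $\varepsilon\to0$. The case $\alpha=0$ is the classical linear inequality and needs no regularization.
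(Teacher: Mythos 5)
Your proof is correct. Note that the paper itself offers no argument for this lemma: it is quoted as a known result with a reference to the literature (\cite{AB}), so there is no internal proof to compare against. Your reduction is the standard and natural one: pass to $v=w^{1-\alpha}$ to linearize the inequality, and handle the failure of Lipschitz continuity of $s\mapsto s^{1-\alpha}$ at $0$ (for $\alpha\in(0,1)$) by the shift $w_\varepsilon=w+\varepsilon$, which keeps the composition absolutely continuous and makes the chain rule $v_\varepsilon'=(1-\alpha)w_\varepsilon^{-\alpha}w'$ valid a.e. The three delicate points are all dealt with properly: the bound $w_\varepsilon^{-\alpha}w^{\alpha}\le 1$ uses $b\ge0$, the error term $-a\varepsilon w_\varepsilon^{-\alpha}$ is controlled by $|a|\varepsilon^{1-\alpha}$ uniformly in $t$ thanks to $w_\varepsilon\ge\varepsilon$ and regardless of the sign of $a$, and this error disappears in the limit because $a\in L^{1}$ and the exponential factor is bounded. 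The integrating-factor step is legitimate since $t\mapsto e^{-\int_{t_0}^{t}a(\tau)d\tau}$ is absolutely continuous and the product of two absolutely continuous functions on a compact interval is again absolutely continuous, so the fundamental theorem of calculus applies. Letting $\varepsilon\downarrow0$ then yields exactly the asserted estimate, including the degenerate situations $w(t_0)=0$ and $\alpha=0$ (the latter needing no regularization, as you observe). This is a complete and self-contained proof of the cited lemma.
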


\subsection{Some examples}

\begin{xmpl}\normalfont
(\textit{Parabolic Variational Inequalities}). Let $\Omega\subset\mathbb{R}%
^N $ be an open bounded subset with a smooth boundary $\partial \Omega$. Let
us consider the following boundary value problem, with Signorini conditions,
of finding a function $(t,x)\mapsto u=u(t,x)$ such that
\begin{equation*}
(P)\left\{
\begin{array}{l}
\frac{\partial u}{\partial t}-\Delta u=f,\;\;(t,x)\in [0,T]\times\Omega, \\
u(0,x)=u_0(x),\;x\in\Omega \mbox{ (initial condition)} \\
u\geq 0,\;\frac{\partial u}{\partial n}\geq 0 \mbox{ and } u\frac{\partial u%
}{\partial n}=0\mbox{ for } (t,x)\in [0,T]\times \partial \Omega.%
\end{array}
\right.
\end{equation*}
It is well-known that the weak formulation of problem $(P)$ is given by the
following parabolic variational inequalities
\begin{equation*}
(VI)\left\{
\begin{array}{l}
\mbox{Find } u\in {\mathcal{C}} \mbox{ such that } \\
\displaystyle\int_\Omega u^{\prime }(t)(v(t)-u(t))dx+\int_\Omega \nabla
u(t)\cdot\nabla (v(t)-u(t))dx\geq \\
\qquad \qquad \displaystyle \int_\Omega f(t)(v(t)-u(t))dx,\;\forall v\in {%
\mathcal{C}}, \mbox{ a.e. } t\in [0,T].%
\end{array}
\right.
\end{equation*}
Here, ${\mathcal{C}}=\{v\in L^2(0,T;H^1(\Omega)) \,:\, v(t)\in C
\mbox{ for
a.e. } t\in [0,T]\},$ where $C=\{v\in H^1(\Omega)\,:\, v\geq 0 \mbox{ on }
\partial \Omega\}$. It is easy to see that the parabolic variational
inequality (VI) is of the form \textrm{(\ref{101})}. The convexity structure
of the set $C$ (since it is a closed convex cone) makes the problem (VI)
standard and may be straightforward. Let us consider now a function $g:%
\mathbb{R}\to \mathbb{R}$ and define the new set $C$ with the associated set
${\mathcal{C}} $
\begin{equation*}
C=\{v\in H^1(\Omega)\,:\, g(v(x))\geq 0 \mbox{ for } x\in\partial \Omega\}.
\end{equation*}
The set $C$ is no more convex and some sufficient conditions on the function
$g$ are necessary to ensure the prox-regularity of the sets $C$ and ${%
\mathcal{C}}$ (see \textrm{\cite{ant}} for more details).
\end{xmpl}

\begin{xmpl}\normalfont
(\textit{Nonlinear Differential Complementarity Systems}). Let us consider
the following ordinary differential equation, coupled with a complementarity
condition,
\begin{equation*}
(NDCS)\left\{
\begin{array}{l}
\dot{x}(t)= f(x(t))+\lambda (t),\;t\in [0,T] \\
\lambda(t),  g(x(t))\geq 0,\, \langle \lambda(t), g(x(t)) \rangle = 0,
\end{array}
\right.
\end{equation*}
where $f:\mathbb{R}^n\to\mathbb{R}^n$, $g:\mathbb{R}^n\to\mathbb{R}^m$ are of
class $C^1$ and $\lambda: [0,T]\to\mathbb{R}^m$ is a Lagrange multiplier
(unknown function). We have that
\begin{equation*}
\lambda(t),  g(x(t))\geq 0,\, \langle \lambda(t), g(x(t)) \rangle = 0 \Longleftrightarrow -\lambda(t)\in
\mathrm{N}_{\mathbb{R}^m_+}(g(x(t))).
\end{equation*}
Hence, (NDCS) is written as
\begin{equation*}
\dot{x}(t) \in f(x(t))-\mathrm{N}_{\mathbb{R}^m_+}(g(x(t))),
\end{equation*}
with $\mathrm{N}_{\mathbb{R}^m_+}(g(x(t)))=\partial \mathrm{I}_{\mathbb{R%
}^m_+}(g(x(t))$, where $\partial$ denotes the subdifferential in the sense of convex analysis. If we suppose a qualification condition such as, e.g., $%
\nabla g$ is surjective, then, using classical chain rules for Clarke generalized subdifferential (see e.g. \textrm{%
\cite{rw}}), we get
\begin{equation*}
\partial (\mathrm{I}_{\mathbb{R}^m_+}\circ g)(x)=\nabla g(x)^T \mathrm{N}_{%
\mathbb{R}^m_+}(g(x)).
\end{equation*}
By setting $C=\{x\in\mathbb{R}^n\,:\, g(x)\geq 0\}$, it is easy to see that
problem (NCDS) is equivalent to the following differential inclusion
\begin{equation*}
\dot{x}(t)\in f(x(t))-\mathrm{N}_C(x(t)),
\end{equation*}
which is of the form of $\textrm{(\ref{101})}$. Under some sufficient
conditions on the vectorial function $g$ (see \textrm{\cite[Theorem 3.5]{ant}%
}), we show that the set $C$ is $r$-prox-regular.
\end{xmpl}

\noindent Many problems in power converters electronics and unilateral
mechanics can be modeled by nonlinear differential complementarity problems
of the form (NDCS) (see e.g. \cite{abb} and \cite{stewart})).

\section{Differential inclusions involving maximal monotone operators}

We review in this section some aspects of the theory of differential
inclusions involving maximal monotone operators. Namely, we provide an
invariance result for associated closed sets that we use in the sequel.

Given a set-valued operator $A:H \rightrightarrows H,$ which we
identify with its graph, we denote its \textit{domain} by
$\text{dom}A:=\{x\in H\mid A(x)\neq\emptyset\}.$ The operator\ $A$
is \textit{monotone} if
\begin{equation*}
\left\langle x_{1}-x_{2},y_{1}-y_{2}\right\rangle \geq0\text{ \ \ for all \ }%
(x_{1},y_{1}),\text{ }(x_{2},y_{2})\in A,
\end{equation*}
and $\alpha$\textit{-hypomonotone} for $\alpha\geq0$ if the operator $%
A+\alpha\,\text{id}$ is monotone, where $\text{id}$ is the identity mapping.
We say that\ $A$ is \textit{maximal monotone} if $A$ is monotone and
coincides with every monotone operator containing its graph. In such a case,
it is known that $A(x)$ is convex and closed for every $x\in H.$ We shall
denote by $(A(x))^{\circ},$ $x\in\text{dom}A,$ the set of minimal norm vectors\
in $A(x);$ i.e., $(A(x))^{\circ}:=\{y\in A(x)\,|\,||y||=\text{min}_{z\in
A(x)}||z||\}; $ hence, for any vector $x\in\text{dom}A$ and $y\in H,$ the
set $\Pi_{A(x)}(y)$ is a singleton and we have that $(y-A(x))^{\circ}=y-\Pi
_{A(x)}(y).$

We consider the following differential inclusion
\begin{equation}
\dot{x}(t)\in f(x(t))-A(x(t)),\text{ \ }t\in\lbrack0,\infty),\text{ }%
x(0;x_0)=x_{0}\in{\overline{\text{dom}A}},  \label{201}
\end{equation}
governed by a maximal monotone operator $A: H \rightrightarrows H$, which is
subject to a perturbation by a $(\kappa-)$Lipschitz continuous mapping $%
f:H\to H.$ By a strong solution of (\ref{201}) starting at $x_0\in{\overline{%
\text{dom}A}}$ we refer to an absolutely continuous function $x(\cdot;x_0)$
which satisfies (\ref{201}) for a.e.\,$t\ge 0$, together with the initial
condition $x(0;x_0)=x_{0}$. It is known that (\ref{201}) processes a unique
strong solution whenever $x_0\in{\text{dom}A}$, $H$ is finite-dimensional, $%
\text{int(dom$A$)}\ne \emptyset$, or $A$ is the subdifferential of convex,
proper, and lower semicontinuous function. More generally, we call $%
x(\cdot;x_0)$ a weak solution of (\ref{201}) starting at $x_0\in{\overline{%
\text{dom}A}}$, the unique continuous function which is the uniform limit of
strong solutions $x(\cdot;x_k)$ with $(x_k)\subset {\text{dom}A}$ converging
to $x_0$.

The following result provides other properties of the solutions of (\ref{201}%
); for more details we refer to the book \cite{B}. To denote the
right-derivative whenever it exists we use the notation
\begin{equation*}
\frac{d^{+}x(t;x_{0})}{dt}:= \underset{h \downarrow 0}{\lim}\frac{%
x(t+h;x_0)- x(t)}{h}.
\end{equation*}

\begin{prpstn}
\label{Brezis}
Fix  $x_0, y_0 \in \overline{\normalfont{\text{dom}}A}$. Then system {\rm (\ref{201})} has a unique
continuous solution $x(t)\equiv x(t;x_0)$, $t\ge 0$,  such that, for all$\ s,t\geq0$
$$
x(s;x(t;x_{0}))=x(t+s;x_{0}), \quad
\left\Vert x(t;x_{0})-x(t;y_{0})\right\Vert \leq e^{\kappa t}||x_{0}-y_{0}||.
$$
Moreover, if $x_0 \in \normalfont{\text{dom}}A$, then
$$
\frac{d^{+}x(t;x_{0})}{dt}=[f(x(t;x_{0}))-A(x(t;x_{0}))]^{\circ}=f(x(t;x_{0}))-\Pi_{A(x(t;x_{0}))}(f(x(t;x_{0}))),
$$
and the function $t\rightarrow\frac{d^{+}x(t)}{dt}$ is
right-continuous at every $t\geq0$ with
\begin{equation}
\left\Vert \frac{d^{+}x(t)}{dt}\right\Vert \leq e^{\kappa t}\left\Vert \frac {%
d^{+}x(0)}{dt}\right\Vert .  \label{prop2.2}
\end{equation}
\end{prpstn}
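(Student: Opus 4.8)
The plan is to reduce the whole statement to the classical theory of the \emph{unperturbed} evolution inclusion $\dot z(t)+B z(t)\ni h(t)$, with $B$ maximal monotone and $h\in L^{1}$, as developed in \cite{B}, absorbing the Lipschitz term $f$ by a fixed-point argument. As motivation, note that since $f$ is $\kappa$-Lipschitz the map $-f+\kappa\,\text{id}$ is monotone and everywhere-defined continuous, so $A-f+\kappa\,\text{id}$ is maximal monotone (with domain $\text{dom}A$); thus $(\ref{201})$ is a hypomonotone perturbation of a maximal monotone flow. Concretely, I would fix $T>0$ and, for $u\in C([0,T];H)$, let $\Phi(u)$ be the unique weak solution of $\dot z+Az\ni f(u(t))$, $z(0)=x_{0}$, which exists by \cite{B} since $f\circ u\in C([0,T];H)\subset L^{1}(0,T;H)$ and $x_{0}\in\overline{\text{dom}A}$. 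The basic contraction estimate for such solutions gives $\|\Phi(u_{1})(t)-\Phi(u_{2})(t)\|\le\int_{0}^{t}\|f(u_{1}(s))-f(u_{2}(s))\|\,ds\le\kappa\int_{0}^{t}\|u_{1}(s)-u_{2}(s)\|\,ds$, hence $\|\Phi^{n}(u_{1})-\Phi^{n}(u_{2})\|_{\infty}\le((\kappa T)^{n}/n!)\,\|u_{1}-u_{2}\|_{\infty}$; for $n$ large $\Phi^{n}$ is a strict contraction, so $\Phi$ has a unique fixed point $x(\cdot;x_{0})$ on $[0,T]$, and letting $T\to\infty$ produces the unique global solution.

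Next I would apply the same estimate to the solutions issued from $x_{0}$ and $y_{0}$: $\|x(t;x_{0})-x(t;y_{0})\|\le\|x_{0}-y_{0}\|+\kappa\int_{0}^{t}\|x(s;x_{0})-x(s;y_{0})\|\,ds$, and Lemma \ref{l2} (with $\alpha=0$, $a\equiv\kappa$, $b\equiv0$) then gives $\|x(t;x_{0})-x(t;y_{0})\|\le e^{\kappa t}\|x_{0}-y_{0}\|$. For the semigroup identity I would fix $t\ge0$ and observe that, since $(\ref{201})$ is autonomous and weak solutions take their values in $\overline{\text{dom}A}$, both $s\mapsto x(t+s;x_{0})$ and $s\mapsto x(s;x(t;x_{0}))$ are weak solutions of $(\ref{201})$ with initial value $x(t;x_{0})$ at $s=0$; uniqueness forces them to coincide.

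When $x_{0}\in\text{dom}A$, I would restrict $\Phi$ to Lipschitz functions and bootstrap the regularity estimates of \cite{B} (which, for $x_{0}\in\text{dom}A$, bound the Lipschitz constant of $\Phi(u)$ in terms of that of $f\circ u$) to conclude that $x(\cdot;x_{0})$ is Lipschitz on every $[0,T]$; then $t\mapsto f(x(t;x_{0}))$ is Lipschitz, so by the regularity part of the Brezis theory (the case $x_{0}\in\text{dom}A$ with a $W^{1,\infty}$ forcing term) $x(\cdot;x_{0})$ is a strong solution, $\frac{d^{+}x(t;x_{0})}{dt}$ exists for every $t\ge0$ with $\frac{d^{+}x(t;x_{0})}{dt}=[f(x(t;x_{0}))-A(x(t;x_{0}))]^{\circ}=f(x(t;x_{0}))-\Pi_{A(x(t;x_{0}))}(f(x(t;x_{0})))$, and $t\mapsto\frac{d^{+}x(t)}{dt}$ is right-continuous. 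Finally $(\ref{prop2.2})$ follows immediately: by the semigroup law and the Lipschitz bound just used, $\|x(t+h;x_{0})-x(t;x_{0})\|=\|x(t;x(h;x_{0}))-x(t;x_{0})\|\le e^{\kappa t}\|x(h;x_{0})-x_{0}\|$, and dividing by $h>0$ and letting $h\downarrow0$ gives $\|\frac{d^{+}x(t)}{dt}\|\le e^{\kappa t}\|\frac{d^{+}x(0)}{dt}\|$.

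The hard part is the regularity step: showing that for $x_{0}\in\text{dom}A$ the weak solution is actually Lipschitz and right-differentiable with right-derivative equal to the minimal-norm section $[f(x(t))-A(x(t))]^{\circ}$. This is exactly the content of Brezis's analysis via the Yosida approximants $A_{\lambda}$ together with monotonicity arguments, and the perturbation $f$ causes no extra difficulty only because one first upgrades $x$ to a Lipschitz function, so that $f\circ x$ becomes an admissible Lipschitz forcing term. The remaining ingredients — the contraction argument, the Gronwall step, the autonomy-plus-uniqueness proof of the semigroup property, and the derivative bound — are all routine once this reduction is in place.
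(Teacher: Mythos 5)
Correct: the paper gives no proof of this proposition at all (it simply defers to Brezis \cite{B}), and your argument --- absorbing the Lipschitz perturbation by a Picard/fixed-point iteration on the unperturbed inclusion, Gronwall for the $e^{\kappa t}$ dependence, autonomy plus uniqueness for the semigroup law, and Brezis's regularity theory for $\dot z+Az\ni h$ with Lipschitz $h$ and $x_0\in\mathrm{dom}\,A$ --- is exactly the classical construction the citation points to. The only step you leave schematic, upgrading the weak solution to a Lipschitz one so that $f\circ x$ is an admissible forcing, can be closed cleanly by comparing $x(\cdot)$ with the solution driven by the constant forcing $f(x_0)$ (giving $\|x(h)-x_0\|\leq h\,\|(f(x_0)-Ax_0)^{\circ}\|e^{\kappa h}$) and then invoking your semigroup-plus-$e^{\kappa t}$ estimate.
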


We are going to characterize those closed sets which are invariant with
respect to differential inclusion (\ref{201}).

\begin{dfntn}
\label{invariant}A closed set $S\subset H$ is strongly invariant for ${\rm (\ref{201}%
)}$ if every  solution  of ${\rm(\ref{201})}$ starting in $S$ remains in this
set for all time $t\geq0.$ \\
The set $S\subset H$ is weakly invariant for ${\rm (\ref{201}%
)}$ if for every $x_0\in S$,  there exists a solution  $x(\cdot;x_0)$ of ${\rm(\ref{201})}$ such that   $x(t;x_0)\in S$ for all time $t\geq0.$ \\
When differential inclusion ${\rm (\ref{201})}$ has a  unique solution for every given initial condition, both notions coincide, and we simply say in this case that $S$ is invariant.
\end{dfntn}

Due to the semigroup property in Proposition \ref{Brezis}, it is immediately
seen that $S$ is invariant iff every solution of $(\ref{201})$ starting in $%
S $ remains in this set for all sufficiently small time $t\geq0.$ The issue
with these sets, also referred to as \textit{viable sets} for (\ref{201});
see, \cite{AC}, is to find good characterizations via explicit criteria,
which do not require an a-priori computation of the solution of (\ref{201}).
An extensive research has been done to solve this problem for different
kinds of differential inclusions and equations (\cite{C, CLR}). Complete
primal and dual characterizations are given in \cite{AHT1, AHT2}.
\begin{prpstn}\label{co.2}
 Assume that $A$ is a monotone operator, and let $S$ be a closed subset of $\normalfont{\text{dom}}A$. Suppose that $x(\cdot)$ is an absolutely continuous function such that $$
\dot{x}(t)\in f(x(t))-A(x(t)) \,\,\text{a.e.}\,\,\, t\in\left[ 0,T\right], x(0) =x_0\in S.
$$
If there are some numbers $m,\rho>0$ such that
\begin{equation}\label{2.0.0}
\underset{\xi\in\mathrm{N}_{S}^{P}(x)}{\ \sup}\;\;\underset{x^{\ast}\in A(x)\cap
\mathrm{B}(0, m)}{\min}\langle\xi,f(x)-x^{\ast}\rangle\leq 0 \,\,\, \forall x\in S\cap
\mathrm{B}(x_{0},\rho),
\end{equation}
then there is some $T^{\ast}\in(0,T]$ such that $x(t)\in S$ for all $%
t\in\lbrack0,T^{\ast}].$
\end{prpstn}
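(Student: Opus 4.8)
The plan is to argue by contradiction, exploiting the Lipschitz bound on $\dot x$ near $t=0$ together with the proximal-normal inequality $(\ref{2.0.0})$. First I would establish that, on a short interval $[0,\tau]$, the solution stays close to $x_0$: since $x(\cdot)$ is absolutely continuous and (by the argument in Theorem~\ref{theo3.1}, or directly from the inclusion and a Gronwall estimate) $\|\dot x(t)\|$ is essentially bounded on a neighborhood of $0$, we can choose $\tau>0$ small enough that $x(t)\in \mathrm{B}(x_0,\rho)$ for all $t\in[0,\tau]$. Suppose, for contradiction, that $x(t)\notin S$ for a sequence of times $t_n\downarrow 0$ (more precisely, that there is no $T^{\ast}$ as claimed; this forces points arbitrarily close to $0$ with $x(t)\notin S$). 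Consider the function $t\mapsto d_S(x(t))^2$; it is nonnegative, vanishes at $t=0$, and is Lipschitz on $[0,\tau]$, hence absolutely continuous. I would try to show it satisfies a differential inequality of the form $\frac{d}{dt}\,d_S(x(t))^2 \le c\, d_S(x(t))^2$ a.e.\ on $[0,\tau]$, which by Lemma~\ref{l2} (Gronwall, with $\alpha=0$) forces $d_S(x(t))^2\le 0$, i.e.\ $x(t)\in S$ for all $t\in[0,\tau]$, and we are done with $T^{\ast}=\tau$.

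To obtain that differential inequality, fix a point $t$ of differentiability of both $x(\cdot)$ and $d_S(x(\cdot))^2$ with $x(t)\notin S$ (if $x(t)\in S$ the derivative of $d_S(x(\cdot))^2$ is $0$ there, since the function is minimized). Let $p\in\Pi_S(x(t))$ (nonempty for a.e.\ such $t$, using density of points with nonempty projection; alternatively work along a sequence). Then $\xi:=x(t)-p\in \mathrm{N}_S^P(p)$, and since $p\in S\cap \mathrm{B}(x_0,\rho)$ (because $\|p-x_0\|\le \|p-x(t)\|+\|x(t)-x_0\|$ is small), hypothesis $(\ref{2.0.0})$ applies at $p$: there exists $x^{\ast}\in A(p)\cap \mathrm{B}(0,m)$ with $\langle \xi, f(p)-x^{\ast}\rangle\le 0$. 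Now compute, using $\frac{d}{dt}\tfrac12 d_S(x(t))^2 = \langle x(t)-p,\dot x(t)\rangle$ (standard, from $\Pi_S$ being a proximal projection) and $\dot x(t)\in f(x(t))-A(x(t))$, say $\dot x(t)=f(x(t))-y^{\ast}$ with $y^{\ast}\in A(x(t))$:
\begin{align*}
\tfrac12\tfrac{d}{dt} d_S(x(t))^2
&= \langle \xi, f(x(t))-y^{\ast}\rangle \\
&= \langle \xi, f(x(t))-f(p)\rangle + \langle \xi, x^{\ast}-y^{\ast}\rangle + \langle \xi, f(p)-x^{\ast}\rangle \\
&\le \kappa\|\xi\|^2 - \langle p-x(t), x^{\ast}-y^{\ast}\rangle + 0,
\end{align*}
where the first term uses $\|f(x(t))-f(p)\|\le \kappa\|x(t)-p\|=\kappa\|\xi\|$, and the last term is $\le 0$. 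The middle term is $-\langle p - x(t),\, x^{\ast}-y^{\ast}\rangle$; since $(p,x^{\ast}),(x(t),y^{\ast})\in A$ and $A$ is monotone, $\langle p-x(t),x^{\ast}-y^{\ast}\rangle\ge 0$, so this term is $\le 0$ as well. Hence $\tfrac12\tfrac{d}{dt} d_S(x(t))^2 \le \kappa\, d_S(x(t))^2$, i.e.\ $\tfrac{d}{dt} d_S(x(t))^2 \le 2\kappa\, d_S(x(t))^2$ a.e.\ on $[0,\tau]$, which is exactly the Gronwall hypothesis.

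Applying Lemma~\ref{l2} with $w(t)=d_S(x(t))^2$, $a\equiv 2\kappa$, $b\equiv 0$, $\alpha=0$, $t_0=0$, and $w(0)=d_S(x_0)^2=0$ gives $d_S(x(t))^2\le 0$ for all $t\in[0,\tau]$, hence $x(t)\in S$ on $[0,\tau]$; taking $T^{\ast}=\tau$ proves the claim. The main obstacle I anticipate is the justification that $d_S(x(\cdot))^2$ is differentiable a.e.\ with derivative $\langle x(t)-p(t),\dot x(t)\rangle$ when $\Pi_S$ need not be single-valued and $S$ is merely closed (not prox-regular): the clean way around this is to note that $t\mapsto d_S(x(t))^2$ is Lipschitz (composition of the $1$-Lipschitz $d_S$, up to the square, with Lipschitz $x(\cdot)$ near $0$), hence a.e.\ differentiable, and at each such point and each $p\in\Pi_S(x(t))$ one has the one-sided estimate $\limsup_{h\to 0^+}\frac{d_S(x(t+h))^2-d_S(x(t))^2}{h}\le 2\langle x(t)-p,\dot x(t)\rangle$ from $d_S(x(t+h))^2\le \|x(t+h)-p\|^2$; since the two-sided derivative exists a.e., this upper Dini derivative equals the derivative, and we may further pass to a $p$ realizing the projection (using that the set of $x$ where $\Pi_S$ is nonempty is dense, combined with a limiting argument, or simply restricting to the a.e.\ set where $x(t)$ has a nonempty projection — a set of full measure is not guaranteed directly, so one argues via points $x(t)+\epsilon v$ and lets $\epsilon\to0$, which only uses $\Pi_S\ne\emptyset$ on a dense set). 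A cleaner alternative, if one prefers to avoid these measure-theoretic subtleties, is to replace $d_S$ by its Moreau envelope $d_S^2(\cdot)$ itself (which is already what we use) and invoke the chain rule for the composition of a Lipschitz function with an absolutely continuous curve together with $\partial_P(\tfrac12 d_S^2)(y)\subset \{y-p: p\in\Pi_S(y)\}$; either route closes the gap.
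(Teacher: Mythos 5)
Your route is genuinely different from the paper's. The paper does not estimate the distance function at all: it extends the monotone operator $A$ to a maximal monotone operator $\hat{A}$ (via \cite{B}), observes that $x(\cdot)$ is then the unique solution of the $\hat{A}$-perturbed inclusion, checks that hypothesis (\ref{2.0.0}) survives the passage from $A$ to $\hat{A}$ after a localization (replacing $\mathrm{B}(0,m)$ by a larger ball $\mathrm{B}(0,k)$ and $S$ by a truncated set $S_k$ with $S_k\cap\mathrm{B}(x_0,\rho)=S\cap\mathrm{B}(x_0,\rho)$), and then invokes its authors' earlier invariance criterion for maximal monotone operators \cite[Corollary 5]{AHB}. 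Your argument is self-contained: the Gronwall inequality for $w(t)=d_S(x(t))^2$, obtained by pairing the proximal normal $\xi=x(t)-p$ with the vector $x^{\ast}\in A(p)\cap\mathrm{B}(0,m)$ furnished by (\ref{2.0.0}) and using plain monotonicity of $A$ at the pair $(p,x^{\ast})$, $(x(t),y^{\ast})$, is correct as a computation, needs no maximal monotone extension and no external invariance theorem, and in finite dimensions it is a complete (and arguably cleaner) proof. Two small points: continuity of $x(\cdot)$ alone already keeps $x(t)$ in $\mathrm{B}(x_0,\rho)$ for small $t$ (no bound on $\dot{x}$ is needed), and since $A(p)$ need not be closed you should take an $\varepsilon$-minimizer in (\ref{2.0.0}) and let $\varepsilon\downarrow 0$ rather than assume the minimum is attained — both harmless.

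The genuine gap is the one you flag yourself: in the Hilbert-space setting of the paper, $\Pi_S(x(t))$ may be empty for a closed, non-prox-regular $S$, and neither of your two proposed repairs is carried out. The "approximate point" fix as sketched does not close by itself: if you replace the projection by a point $q$ with $\|x(t)-q\|^2\le d_S(x(t))^2+\varepsilon$, the identity $w(t)=\|x(t)-q\|^2$ fails by an error of size $\varepsilon$ which, divided by $h$, blows up unless $\varepsilon=\varepsilon(h)=o(h)$; moreover $x(t)-q$ is then not a proximal normal, so one must instead take $y$ near $x(t)$ with $\Pi_S(y)\ni q$ nonempty (density of attainment points), use $\xi=y-q\in\mathrm{N}_S^P(q)$ in (\ref{2.0.0}), and control the extra term $\langle y-x(t),x^{\ast}-y^{\ast}\rangle$ by $\|y-x(t)\|\,(m+\|f(x(t))-\dot{x}(t)\|)$, which is finite at each fixed differentiability time. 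With the coupling $\varepsilon(h)=o(h)$ all error terms vanish and your differential inequality $w'(t)\le 2\kappa\,w(t)$ does hold a.e., so the gap is reparable — but it is precisely this infinite-dimensional attainment issue that the paper sidesteps by reducing to \cite[Corollary 5]{AHB}, whose proof already packages the required variational machinery. Your alternative via $\partial_P(\tfrac12 d_S^2)(y)\subset\{y-p:\ p\in\Pi_S(y)\}$ likewise needs a chain rule along the absolutely continuous curve that you do not justify. As written, then, the proposal is a valid proof in finite dimensions and a correct strategy in general, but the stated proposition lives in $H$ and the projection-attainment step must be completed for the argument to stand there.
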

\begin{proof}
According to \cite{B}, there exists a maximal monotone operator
$\hat{A}$ which extends the monotone operator $A$, so that
$x(\cdot)$ is the unique solution of the differential inclusion
$$
\dot{x}(t) \in f(x(t)) - \hat{A}(x(t)) \,\,t \ge 0, x(0)= x_0 \in
S.
$$ By (\ref{2.0.0}) and the Lipschitz continuity of $f$,
 we choose $k>m$ such that for all $x \in \mathrm{B}(x_0, \rho) \cap S$ one has
\begin{equation}\label{2.0.1}
\Vert \Pi_{A(x)}(f(x))\Vert \le k,\,\, \Vert (f(x)- A(x))^{\circ} \Vert \le k,\,\, \hat{A}(x) \cap \mathrm{B}(0, k) \ne \emptyset.
\end{equation}
Hence, for
$$
S_{k}:= \{x \in S \cap \text{dom}\hat{A} \,\mid \,\Vert (f(x)-
A(x))^{\circ}\Vert \le k\},
$$ it holds
$ S_k \cap \mathrm{B}(x_0, \rho) = S \cap \mathrm{B}(x_0, \rho), $
and so, (\ref{2.0.0}) implies that, for every $x \in S \cap
\mathrm{B}(x_0, \frac{\rho}{2}),$
$$
\underset{\xi\in\mathrm{N}_{S_k}^{P}(x)}{\
\sup}\;\;\underset{x^{\ast}\in \hat{A}(x)\cap
\mathrm{B}(0,k)}{\min}\langle\xi,f(x)-x^{\ast}\rangle \leq
\underset{\xi\in\mathrm{N}_{S}^{P}(x)}{\
\sup}\;\;\underset{x^{\ast}\in A(x)\cap \mathrm{B}(0,
k)}{\min}\langle\xi,f(x)-x^{\ast}\rangle \leq 0.
$$
Consequently, the conclusion follows by by applying
\cite[Corollary 5]{AHB}.
\end{proof}
\section{The existence result}

In this section, we use tools from convex and variational analysis
to prove the existence and uniqueness of a solution for the
differential inclusion $(\ref{101}),$
\begin{equation*}
\begin{cases}
\dot{x}(t)\in f(x(t))-\mathrm{N}_{C}(x(t)) & \,\text{a.e. }\,t\geq0 \\
x(0,x_{0})=x_{0}\in C, &
\end{cases}%
\end{equation*}
where $\mathrm{N}_{C}$ is the proximal, or, equivalently, the limiting,
normal cone to an $r$-uniformly prox-regular closed subset $C$ of $H$, and\ $%
f$ is a Lipschitz continuous mapping. We shall denote by $x(\cdot;x_{0})$
the solution of this inclusion.
\begin{dfntn}
\label{defi21}{\rm (see \cite{MT1,PRT})}\ For positive numbers
$r$ and $\alpha,$ a closed set $S$ is said to be $(r,\alpha)$-prox-regular at
$\overline{x}\in S$ provided that one has
$
x=\Pi_{S}(x+v),
$
for all $x\in S\cap \mathrm{B}(\overline{x},\alpha)$
and all $v\in\mathrm{N}_{S}^{P}(x)$ such that $||v||<r.$
\\
The set $S$ is $r$-prox-regular (resp., prox-regular) at $\overline{x}$ when
it is $(r,\alpha)$-prox-regular at $\overline{x}$ for some real $\alpha>0$
(resp., for some numbers $r, \alpha>0$). The set $S$ is said to be
$r$-uniformly prox-regular when $\alpha=+\infty.$
\end{dfntn}
It is well-known and easy to check that when $S$ is $r$-uniformly
prox-regular, then for every $x\in S,$ $\mathrm{N}_{S}^{P}(x)=\mathrm{N}%
_{S}^{C}(x)$; thus, for such sets we will simply write $\mathrm{N}_{S}(x)$
to refer to each one of these cones, and write $T_S(x)$ to refer to the
Bouligand tangent cone $T^B_S(x)=(\mathrm{N}_{S}(x))^*$.

We have the following property of $r$-uniformly prox-regular sets,
(see e.g. \cite{CT, MT1, MH, PRT}).

\begin{prpstn}
\label{prop2.1} Let $S$ be a closed subset of $H.$ If $S$ is $r$-uniformly
prox-regular, then the set-valued mapping defined by $x\mapsto \mathrm{N}%
_{S}^{P}(x)\cap\mathbb{B}$ is $\frac{1}{r}$-hypomonotone.
\end{prpstn}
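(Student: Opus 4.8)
The plan is to turn the geometric definition of $r$-uniform prox-regularity into a one-sided scalar inequality for unit proximal normals, and then to symmetrize that inequality over the two base points.

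First I would fix $x\in S$ and $\xi\in\mathrm{N}_{S}^{P}(x)\cap\mathbb{B}$, and observe that for every $\lambda\in(0,r)$ the vector $\lambda\xi$ belongs to $\mathrm{N}_{S}^{P}(x)$ with $||\lambda\xi||\le\lambda<r$; hence $r$-uniform prox-regularity (Definition \ref{defi21}) gives $x\in\Pi_{S}(x+\lambda\xi)$, i.e. $x$ is a nearest point of $S$ to $x+\lambda\xi$. Comparing with an arbitrary $y\in S$ yields $||\lambda\xi||^{2}=||(x+\lambda\xi)-x||^{2}\le||(x+\lambda\xi)-y||^{2}$; expanding the right-hand side and cancelling $\lambda^{2}||\xi||^{2}$ gives $\langle\xi,y-x\rangle\le\tfrac{1}{2\lambda}||x-y||^{2}$, and letting $\lambda\uparrow r$ produces the key estimate
\[
\langle\xi,y-x\rangle\le\frac{1}{2r}\,||x-y||^{2}\qquad\text{for all }x,y\in S\text{ and }\xi\in\mathrm{N}_{S}^{P}(x)\cap\mathbb{B}.
\]

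Next, given $x_{1},x_{2}\in S$ and $\xi_{i}\in\mathrm{N}_{S}^{P}(x_{i})\cap\mathbb{B}$ for $i=1,2$, I would apply this estimate once with $(x,\xi,y)=(x_{1},\xi_{1},x_{2})$ and once with $(x,\xi,y)=(x_{2},\xi_{2},x_{1})$, and add the two inequalities: this gives $\langle\xi_{1},x_{2}-x_{1}\rangle+\langle\xi_{2},x_{1}-x_{2}\rangle\le\tfrac{1}{r}||x_{1}-x_{2}||^{2}$, that is, $\langle\xi_{1}-\xi_{2},x_{1}-x_{2}\rangle\ge-\tfrac{1}{r}||x_{1}-x_{2}||^{2}$. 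By the definition of $\alpha$-hypomonotonicity with $\alpha=1/r$ (equivalently, monotonicity of the operator $(\mathrm{N}_{S}^{P}(\cdot)\cap\mathbb{B})+\tfrac1r\,\mathrm{id}$), this is precisely the assertion.

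I do not expect a genuine obstacle here: the argument is essentially the standard "realization" of prox-regularity as a hypomonotone normal-cone map. The only delicate point is the strict inequality $||v||<r$ in Definition \ref{defi21}, which forbids plugging in a unit normal scaled by exactly $r$; this is why I work with $\lambda<r$ and pass to the limit, which is harmless since the intermediate inequality is continuous (indeed affine) in $\lambda$.
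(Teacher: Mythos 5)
Your argument is correct and is essentially the standard one: the paper itself states Proposition \ref{prop2.1} without proof, delegating it to the cited references (e.g.\ \cite{CT, MT1, PRT}), and your derivation --- scaling a unit proximal normal by $\lambda<r$, using $x\in\Pi_{S}(x+\lambda\xi)$ to get $\langle\xi,y-x\rangle\le\frac{1}{2\lambda}\|x-y\|^{2}$, letting $\lambda\uparrow r$, and then symmetrizing --- is exactly the classical route used there (and is the same mechanism behind the equivalence (a)$\Leftrightarrow$(b) of Lemma \ref{extension} via \cite{PRT}). Your handling of the strict bound $\|v\|<r$ by passing to the limit in $\lambda$ is the right way to deal with that detail, so there is no gap.
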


Before we state the main theorem of this section we give a useful
characterization of prox-regularity.

\begin{lmm}\label{extension}The following statements are equivalent for every closed set $C\subset
H$ and every $m>0,$

{\rm{(a)}}\ $C$ is $r^{\prime}$-uniformly prox-regular for every $r^{\prime
}<r$,

{\rm{(b)} }the mapping $\mathrm{N}^P_{C}\cap \mathrm{B}(0, m)+\frac{m}{r}%
\operatorname*{id}$ is monotone,

{\rm{(c)}}\ there exists a maximal monotone operator $A$ defined on $H$ such
that
\[
\mathrm{N}^P_{C}(x)\cap \mathrm{B}(0, m)+\frac{m}{r}x\subset A(x)\subset\mathrm{N}^P%
_{C}(x)+\frac{m}{r}x\text{ \ for every }x\in C.
\]

\end{lmm}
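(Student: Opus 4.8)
The plan is to prove the cycle (a) $\Rightarrow$ (b) $\Rightarrow$ (c) $\Rightarrow$ (a); note that (c) trivially implies the monotonicity in (b) (a subset of a monotone operator is monotone), so the last arrow really only requires (c) $\Rightarrow$ (a). For (a) $\Rightarrow$ (b), I would invoke Proposition \ref{prop2.1}: for each $r'<r$ the $r'$-uniform prox-regularity of $C$ makes $x\mapsto\mathrm{N}^P_C(x)\cap\B$ $\tfrac1{r'}$-hypomonotone, i.e. $\langle x_1-x_2,w_1-w_2\rangle\ge-\tfrac1{r'}\|x_1-x_2\|^2$ for $w_i\in\mathrm{N}^P_C(x_i)\cap\B$; letting $r'\uparrow r$ gives the same with $\tfrac1r$, and since $\mathrm{N}^P_C$ is a cone, substituting $w_i=v_i/m$ with $v_i\in\mathrm{N}^P_C(x_i)\cap\mathrm{B}(0,m)$ and multiplying by $m$ yields exactly the monotonicity of $\mathrm{N}^P_C\cap\mathrm{B}(0,m)+\tfrac mr\operatorname{id}$.

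For (b) $\Rightarrow$ (c), the operator $T:=\mathrm{N}^P_C\cap\mathrm{B}(0,m)+\tfrac mr\operatorname{id}$ has domain exactly $C$ (since $0\in\mathrm{N}^P_C(x)\cap\mathrm{B}(0,m)$ there) and is monotone by (b), hence admits a maximal monotone extension $A$ by Zorn's lemma (see \cite{B}); the left inclusion in (c) is then automatic. For the right inclusion, fix $x\in C$ and $y\in A(x)$ and put $v:=y-\tfrac mr x$; testing the monotonicity of $A$ against the pairs $(x',\tfrac mr x')\in T$ for $x'\in C$ (i.e. using the zero normal vector at $x'$) collapses to $\langle v,x'-x\rangle\le\tfrac mr\|x'-x\|^2$ for every $x'\in C$, which is precisely the proximal-normal inequality and therefore says $v\in\partial_P\mathrm{I}_C(x)=\mathrm{N}^P_C(x)$, i.e. $y\in\mathrm{N}^P_C(x)+\tfrac mr x$.

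For (c) $\Rightarrow$ (a), first note (again via the cone/rescaling step) that (c) forces $x\mapsto\mathrm{N}^P_C(x)\cap\B$ to be $\tfrac1r$-hypomonotone. Now fix $r'<r$, $x\in C$ and $v\in\mathrm{N}^P_C(x)$ with $\|v\|<r'$, and set $p:=x+v$, so that $d_C(p)\le\|v\|<r'$; I must show $\Pi_C(p)=\{x\}$. If $x_1\in\Pi_C(p)$ then $p-x_1\in\mathrm{N}^P_C(x_1)$ and $\|p-x_1\|=d_C(p)<r'$, so feeding $\tfrac1{r'}(p-x_1)\in\mathrm{N}^P_C(x_1)\cap\B$ and $\tfrac1{r'}v\in\mathrm{N}^P_C(x)\cap\B$ into hypomonotonicity gives $-\tfrac1{r'}\|x-x_1\|^2\ge-\tfrac1r\|x-x_1\|^2$, i.e. $(\tfrac1r-\tfrac1{r'})\|x-x_1\|^2\ge0$, which forces $x_1=x$; hence $\Pi_C(p)\subseteq\{x\}$. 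To get $x\in\Pi_C(p)$ (this is where infinite-dimensionality bites, since $\Pi_C(p)$ could a priori be empty), choose $p_k\to p$ with $x_k\in\Pi_C(p_k)\ne\emptyset$ (possible by the density property of the metric projection recalled in Section 2); since $\|p_k-x_k\|=d_C(p_k)\to d_C(p)<r'$, one has $\tfrac1{r'}(p_k-x_k)\in\mathrm{N}^P_C(x_k)\cap\B$ for $k$ large, and hypomonotonicity against $\tfrac1{r'}v$ rearranges to $\langle x-x_k,p-p_k\rangle\ge(1-\tfrac{r'}r)\|x-x_k\|^2$, whence by Cauchy--Schwarz $\|x-x_k\|\le(1-\tfrac{r'}r)^{-1}\|p-p_k\|\to0$; continuity of $d_C$ then gives $d_C(p)=\lim\|p_k-x_k\|=\|v\|$, so $x\in\Pi_C(p)$. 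Thus $\Pi_C(p)=\{x\}$, which is the $r'$-uniform prox-regularity of $C$ in the sense of Definition \ref{defi21} with $\alpha=+\infty$.

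The genuinely delicate point is this last direction. The naive way of exploiting hypomonotonicity (testing against a zero normal vector) only recovers prox-regularity with radius $r/2$, so one really needs the projection/limit argument above to obtain the sharp radius $r'$ for every $r'<r$; and one must not take $\Pi_C(p)$ nonempty for granted but derive it from the density theorem for nearest points together with the Lipschitz continuity of $d_C$. The other two implications are bookkeeping with the cone property and the standard maximal-monotone extension theorem.
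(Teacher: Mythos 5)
Your proposal is correct, and its core coincides with the paper's proof exactly where the paper actually writes out an argument: the implication (b) $\Rightarrow$ (c) is handled in the same way (domain of $\mathrm{N}^P_C\cap \mathrm{B}(0,m)+\frac{m}{r}\operatorname{id}$ equals $C$, take any maximal monotone extension $A$, the left inclusion is automatic, and the right inclusion follows by testing monotonicity of $A$ against the pairs $(x',\frac{m}{r}x')$, $x'\in C$, which yields the global proximal-normal inequality $\langle \xi-\frac{m}{r}x, y-x\rangle\le\frac{m}{r}\|y-x\|^2$). Where you genuinely diverge is the prox-regularity part of the equivalence: the paper simply cites \cite[Theorem 4.1]{PRT} for (a) $\Leftrightarrow$ (b) and notes that (c) $\Rightarrow$ (b) is immediate, whereas you close the cycle yourself, getting (a) $\Rightarrow$ (b) from Proposition \ref{prop2.1} by letting $r'\uparrow r$ and rescaling with the cone property, and proving (c) $\Rightarrow$ (a) directly: uniqueness of the projection from the $\frac1r$-hypomonotonicity of $\mathrm{N}^P_C\cap\mathbb{B}$ (giving the sharp radius $r'<r$), and nonemptiness of $\Pi_C(x+v)$ via the density of points admitting nearest points together with the estimate $\|x-x_k\|\le(1-\frac{r'}{r})^{-1}\|p-p_k\|$ and continuity of $d_C$; I checked these computations and they are sound, including the attention to possibly empty projections in infinite dimensions. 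The trade-off is clear: the paper's route is shorter because it outsources the hardest direction to \cite{PRT}, while yours makes the lemma essentially self-contained (modulo Proposition \ref{prop2.1} and the density theorem) and exhibits explicitly why every radius $r'<r$, rather than only $r/2$, is recovered.
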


\begin{proof}
The equivalence $(a)\Longleftrightarrow(b)$ is given in \cite[Theorem 4.1]%
{PRT}, while the implication $(c)\implies(b)$ is immediate. Then we only
have to prove that $(b)\implies(c).$ If $(b)$ holds, we choose a maximal
monotone operator $A$, which extends the monotone mapping $\mathrm{N}^P_{C}\cap \mathrm{B}(0,m)+\frac{m}{r}\text{id},$ such that $C\subset \text{dom}%
A\subset\overline{\text{co}}C$ (see, e.g., \cite{B}). Moreover, we have that
\begin{equation}
\mathrm{N}^P_{C}(x)\cap \mathrm{B}(0, m)+\frac{m}{r}x\subset A(x)\subset \mathrm{N}^P
_{C}(x)+\frac{m}{r}x\,\, \forall x\in C.  \label{314}
\end{equation}
Indeed, the first inclusion is obvious. If $x\in C$ and $\xi\in A(x),$ then
for\ any $y\in C$ we have $\frac{m}{r}y\in A(y)$ (since $0\in \mathrm{N}^P_{C}(y)\cap \mathrm{B}(0,m)$) and, so, $\langle\xi-\frac{m}{r}y,x-y\rangle\geq0.$
This implies
\begin{equation*}
\langle\xi-\frac{m}{r}x,y-x\rangle\leq\frac{m}{r}||y-x||^{2},
\end{equation*}
which proves that\ $\xi-\frac{m}{r}x\in\mathrm{N}^P_{C}(x)$, for
every $\xi\in A(x).$ Hence, $A(x)\subset
\mathrm{N}^P_{C}(x)+\frac{m}{r}x$, for every $x\in C$.
\end{proof}
\bigskip
 We also need some properties of the solution of (\ref{101}).
 The assertions of the  following lemma are very natural and may have already appeared in the literature. For the convenience of the reader, we give a proof.
\begin{lmm}
\label{prop3.2}If $x(\cdot;x_{0})$ is a solution of ${\rm (\ref{101})}$, then
for a.e. $t\in\lbrack0,T]$ we have
\begin{equation}
\langle\dot{x}(t),f(x(t))-\dot{x}(t)\rangle=0,\label{e1}%
\end{equation}%
\begin{equation}
\left\Vert f(x(t))-\dot{x}(t)\right\Vert \leq\left\Vert f(x(t))\right\Vert
,\label{e2}%
\end{equation}%
\begin{equation}
||\dot{x}(t)||\leq\min\{||f(x(t))||,\,||f(x_{0})||e^{\kappa t}\},\,\,||x(t)-x_{0}%
||\leq t||f(x_{0})||e^{\kappa t}.\label{e3}%
\end{equation}
Consequently, $x(\cdot;x_{0})$ is the unique solution of {\rm(\ref{101})} on
$[0,T].$
\end{lmm}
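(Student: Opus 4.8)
The plan is to prove $(\ref{e1})$ first — this is the only place where the prox-regularity of $C$ genuinely enters — and then to deduce $(\ref{e2})$, $(\ref{e3})$, and the uniqueness statement from it, essentially by algebra and Gronwall's inequality. Throughout, one works at a.e. $t$, i.e.\ at points where $x(\cdot;x_0)$ is differentiable, the inclusion in $(\ref{101})$ holds, and $t$ is a Lebesgue point of $\|\dot x(\cdot)\|$; by absolute continuity this covers a.e.\ $t\in[0,T]$, and at such a point $\|x(t+h)-x(t)\|^{2}=o(|h|)$ as $h\to 0$.

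For $(\ref{e1})$, fix such a $t$ and put $v:=f(x(t))-\dot x(t)$, so that $v\in\mathrm N_{C}(x(t))=\mathrm N^{P}_{C}(x(t))$. From the definition of $r$-uniform prox-regularity, for $\lambda\in(0,r/\|v\|)$ one has $x(t)=\Pi_{C}(x(t)+\lambda v)$; expanding $\|\lambda v\|^{2}\le\|\lambda v-(y-x(t))\|^{2}$ for $y\in C$ and letting $\lambda\uparrow r/\|v\|$ (the inequality being trivial if $v=0$) gives
\[
\langle v,\,y-x(t)\rangle\ \le\ \frac{\|v\|}{2r}\,\|y-x(t)\|^{2}\qquad\text{for all }y\in C
\]
(a form of the prox-regularity/hypomonotonicity estimate of Proposition \ref{prop2.1}). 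Applying this with $y=x(t+h)\in C$, dividing by $h$, and letting $h\to 0^{+}$ and then $h\to 0^{-}$: the right-hand side tends to $0$ by the Lebesgue-point estimate, while the left-hand side tends to $\langle v,\dot x(t)\rangle$ with the inequality kept for $h\to0^+$ and reversed for $h\to0^-$; hence $\langle v,\dot x(t)\rangle\le 0$ and $\langle v,\dot x(t)\rangle\ge 0$, i.e.\ $\langle v,\dot x(t)\rangle=0$, which is $(\ref{e1})$.

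Given $(\ref{e1})$, one has $\langle\dot x(t),f(x(t))\rangle=\|\dot x(t)\|^{2}$, whence expanding the square yields $\|f(x(t))-\dot x(t)\|^{2}=\|f(x(t))\|^{2}-\|\dot x(t)\|^{2}$, which gives $(\ref{e2})$ and also $\|\dot x(t)\|\le\|f(x(t))\|$. Since $f$ is $\kappa$-Lipschitz, $\|f(x(t))\|\le\|f(x_{0})\|+\kappa\int_{0}^{t}\|\dot x(s)\|\,ds$, so applying Gronwall's Lemma \ref{l2} (with $\alpha=0$) to $w(t):=\int_{0}^{t}\|\dot x(s)\|\,ds$ gives $\|\dot x(t)\|\le\|f(x_{0})\|e^{\kappa t}$; integrating and using $e^{\kappa s}\le e^{\kappa t}$ for $s\le t$ gives $\|x(t)-x_{0}\|\le t\|f(x_{0})\|e^{\kappa t}$, completing $(\ref{e3})$.

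For uniqueness, let $x_{1},x_{2}$ be two solutions of $(\ref{101})$ with $x_{1}(0)=x_{2}(0)=x_{0}$. By $(\ref{e3})$ both stay, on $[0,T]$, in $\mathrm B(x_{0},R)$ with $R:=T\|f(x_{0})\|e^{\kappa T}$, so $\ell:=\|f(x_{0})\|+\kappa R<\infty$ bounds $\|f\|$ on that ball, and by $(\ref{e2})$ the vectors $v_{i}(t):=f(x_{i}(t))-\dot x_{i}(t)\in\mathrm N_{C}(x_{i}(t))$ satisfy $\|v_{i}(t)\|\le\ell$. Writing the displayed prox-regularity inequality at $x_{1}(t)$ and at $x_{2}(t)$ and adding yields $\langle v_{1}(t)-v_{2}(t),x_{1}(t)-x_{2}(t)\rangle\ge-\tfrac{\ell}{r}\|x_{1}(t)-x_{2}(t)\|^{2}$; combined with the $\kappa$-Lipschitz bound on $\langle f(x_{1}(t))-f(x_{2}(t)),x_{1}(t)-x_{2}(t)\rangle$ this gives $\tfrac12\tfrac{d}{dt}\|x_{1}(t)-x_{2}(t)\|^{2}\le(\kappa+\tfrac{\ell}{r})\|x_{1}(t)-x_{2}(t)\|^{2}$ a.e., and since $x_{1}(0)=x_{2}(0)$, Gronwall forces $x_{1}\equiv x_{2}$ on $[0,T]$. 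The step requiring the most care is $(\ref{e1})$ — upgrading the one-sided prox-regularity estimate to the orthogonality relation via the two-sided difference quotients — while for uniqueness the point is that $(\ref{e2})$–$(\ref{e3})$ supply the a priori bound $\|v_{i}(t)\|\le\ell$ that makes the hypomonotone-type estimate applicable on all of $[0,T]$.
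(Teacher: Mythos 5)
Your proof is correct and follows essentially the same route as the paper: the orthogonality relation (\ref{e1}) via two-sided difference quotients tested against the prox-regular quadratic estimate $\langle v,y-x\rangle\le\frac{\|v\|}{2r}\|y-x\|^2$, then (\ref{e2})--(\ref{e3}) by algebra and Gronwall, and uniqueness by the hypomonotone-type estimate plus Gronwall. The only differences are cosmetic (you expand the square instead of invoking the projection onto $\mathrm{N}_C(x(t))$, apply Gronwall to $\int_0^t\|\dot x\|$ rather than to $\|x(t)-x_0\|^2$, and note, harmlessly, a Lebesgue-point condition where differentiability already suffices).
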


\begin{proof}
Let $t\in(0,T]$ be a differentiability point of the solution $x(\cdot).$
Then there is some $\delta>0$ such that
\begin{equation*}
\langle f(x(t))-\dot{x}(t),x(s)-x(t)\rangle\leq\delta||x(s)-x(t)||^{2},\text{
for all }s\in\lbrack0,T],
\end{equation*}
and, so, by dividing on $s-t$ and taking the limit as $s\downarrow t$ we
derive that
\begin{equation*}
\langle f(x(t))-\dot{x}(t),\dot{x}(t)\rangle\leq0.
\end{equation*}
Similarly, when $s\uparrow t$ we get $\langle f(x(t))-\dot{x}(t),\dot {x}%
(t)\rangle\geq0,$ which yields $(\ref{e1})$. Since $f(x(t))-\dot{x}(t)\in%
\mathrm{N}_{C}(x(t))$ and $\dot{x}(t)\in\mathrm{T}_{C}^{B}(x(t)),$ statement
$(\ref{e1})$ means that $f(x(t))-\dot{x}(t)=\Pi_{\mathrm{N}%
_{C}(x(t))}(f(x(t)))$ and this yields $(\ref{e2})$, $\left\Vert f(x(t))-%
\dot {x}(t)\right\Vert \leq\left\Vert f(x(t))\right\Vert .$ Moreover, using $%
(\ref{e1})$, we have\ (for a.e. $t\in\lbrack0,T]$)
\begin{equation}
||\dot{x}(t)||^{2}=\langle\dot{x}(t),\dot{x}(t)\rangle=\langle\dot {x}%
(t),f(x(t))\rangle\leq\left\Vert \dot{x}(t)\right\Vert \left\Vert
f(x(t))\right\Vert,  \label{de}
\end{equation}
which gives us $||\dot{x}(t)||\leq||f(x(t))||.$ Then
\begin{equation*}
\begin{split}
\frac{d}{dt}\left\Vert x(t)-x_{0}\right\Vert ^{2}=2\langle x(t)-x_{0},\dot {x%
}(t)\rangle & \leq2||x(t)-x_{0}||\, ||f(x(t))|| \\
& \leq2||x(t)-x_{0}||(||f(x_{0})||+\kappa ||x(t)-x_{0}||) \\
& =2||f(x_{0})||\,||x(t)-x_{0}||+2\kappa ||x(t)-x_{0}||^{2},
\end{split}%
\end{equation*}
which by Lemma $\ref{l2}$ gives us
\begin{equation}
||x(t)-x_{0}||\leq\frac{||f(x_{0})||}{\kappa}(e^{\kappa
t}-1)\leq||f(x_{0})||te^{\kappa t},  \label{307}
\end{equation}
so that, using the inequality of the middle together with ($\ref{de}$),
\begin{equation*}
\begin{split}
||\dot{x}(t)||& \leq||f(x(t))||\leq||f(x_{0})||+\kappa ||x(t)-x_{0}|| \\
&\leq ||f(x_{0})||+||f(x_{0})||(e^{\kappa t}-1)=||f(x_{0})||e^{\kappa t}.
\end{split}%
\end{equation*}
This proves\ $(\ref{e2})$ and $(\ref{e3})$.

To finish we need to check the uniqueness of the solution. Proceeding by
contradiction, we assume that $y(\cdot)$ is another solution on $[0,T]$ of $(%
\ref{101})$. Then for all $t\in\lbrack0,T]$ such that\ $%
||f(x(t))||+||f(y(t)||>0$ and $f(y(t))-\dot{y}(t)\in\mathrm{N}_{C}(y(t))$ we
have
\begin{equation*}
\frac{f(y(t))-\dot{y}(t)}{||f(x(t))||+||f(y(t)||}\in\mathrm{N}_{C}(y(t))\cap%
\mathbb{B},
\end{equation*}
and similarly for $x(\cdot).$ Then, by the $r$-uniformly prox-regularity
hypothesis on $C,$
\begin{equation}
\langle\dot{x}(t)-\dot{y}(t),x(t)-y(t)\rangle\leq\left( \kappa+\frac{1}{r}%
\big(||f(x(t))||+||f(y(t))||\big)\right) ||x(t)-y(t)||^{2};  \label{ok}
\end{equation}
this inequality also holds when $||f(x(t))||+||f(y(t)||=0$ as a consequence
of ($\ref{de}$). By applying Gronwall's Lemma (Lemma $\ref{l2}$) with the
function $\frac{1}{2}||x(t)-y(t)||^{2},$ and observing that $x(0)=y(0)=x_{0}$%
, it follows that $x(t)=y(t)$ for every $t\in\lbrack0,T]$.
\end{proof}

The main result is given in the following theorem, using\ a convex analysis
approach, while Theorem \ref{theo3.1-b} below provides more properties of
the solution, which will be used later on.

\begin{theorem}
\label{theo3.1} The differential inclusion {\rm(\ref{101})} has a
unique solution $x(\cdot ;x_{0})$ starting at $x_{0}\in C,$ which
is Lipschitz on every bounded interval.
\end{theorem}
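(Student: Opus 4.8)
The plan is to carry out the programme announced in the introduction: use Lemma~\ref{extension} to rewrite (\ref{101}) as a perturbed maximal monotone inclusion of the type (\ref{Maxmoninclusion}), solve the latter by the classical theory recalled in Proposition~\ref{Brezis}, and then invoke the viability criterion of Proposition~\ref{co.2} to guarantee that the resulting trajectory never leaves $C$. Uniqueness on every interval $[0,T]$ is already contained in Lemma~\ref{prop3.2}, so only existence and the Lipschitz property remain.

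Fix $T>0$. Any solution $x(\cdot)$ of (\ref{101}) on $[0,T]$ satisfies $\|f(x(t))\|\le\|f(x_{0})\|e^{\kappa t}$ by Lemma~\ref{prop3.2}; accordingly, choose $m>\|f(x_{0})\|e^{\kappa T}$. Since $C$ is $r$-uniformly prox-regular, condition (a) of Lemma~\ref{extension} holds, hence so does (c): there is a maximal monotone operator $A$ on $H$ with
\[
\mathrm{N}^{P}_{C}(x)\cap\mathrm{B}(0,m)+\tfrac{m}{r}x\ \subset\ A(x)\ \subset\ \mathrm{N}^{P}_{C}(x)+\tfrac{m}{r}x\qquad\text{for every }x\in C,
\]
and, in particular, $C\subset\text{dom}A$ (take the zero vector in the left-hand inclusion). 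Put $g:=f+\tfrac{m}{r}\operatorname{id}$, a $(\kappa+\tfrac{m}{r})$-Lipschitz mapping. As $x_{0}\in C\subset\text{dom}A$, Proposition~\ref{Brezis} (applied with $g$ in the role of $f$) produces a unique solution $y(\cdot)$ of $\dot y(t)\in g(y(t))-A(y(t))$, $y(0)=x_{0}$, which is absolutely continuous and Lipschitz on $[0,T]$ and enjoys the semigroup property. On any subinterval on which $y(t)\in C$, the right-hand inclusion above yields $\dot y(t)\in g(y(t))-A(y(t))\subset f(y(t))-\mathrm{N}^{P}_{C}(y(t))$, so $y$ solves (\ref{101}) there; conversely, using the left-hand inclusion together with $\|f(x(t))-\dot x(t)\|\le\|f(x(t))\|<m$ from Lemma~\ref{prop3.2}, any solution of (\ref{101}) solves the monotone inclusion. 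It therefore remains only to prove that $y(t)\in C$ for all $t\in[0,T]$.

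For this I would apply Proposition~\ref{co.2} with $S=C$ (a closed subset of $\text{dom}A$) and perturbation $g$. Fix $\rho\in(0,(m-\|f(x_{0})\|)/\kappa)$ and let $x\in C\cap\mathrm{B}(x_{0},\rho)$, $\xi\in\mathrm{N}^{P}_{C}(x)$. If $\langle\xi,f(x)\rangle\le0$ take $x^{\ast}:=\tfrac{m}{r}x$, and otherwise take $x^{\ast}:=\tfrac{\langle\xi,f(x)\rangle}{\|\xi\|^{2}}\,\xi+\tfrac{m}{r}x$; in the latter case the vector $\tfrac{\langle\xi,f(x)\rangle}{\|\xi\|^{2}}\xi$ is a nonnegative multiple of $\xi$ of norm at most $\|f(x)\|\le\|f(x_{0})\|+\kappa\rho<m$, so in both cases $x^{\ast}\in A(x)$ and $\|x^{\ast}\|$ stays bounded by a common $m'$ over $\mathrm{B}(x_{0},\rho)$, while $\langle\xi,g(x)-x^{\ast}\rangle$ equals $\langle\xi,f(x)\rangle$ or $0$, hence is $\le0$. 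Thus criterion (\ref{2.0.0}) holds and Proposition~\ref{co.2} yields $T^{\ast}\in(0,T]$ with $y(t)\in C$ on $[0,T^{\ast}]$. To reach all of $[0,T]$, let $T^{\ast}$ be the supremum of such times; then $y(T^{\ast})\in C$ by closedness, $y|_{[0,T^{\ast}]}$ solves (\ref{101}) by the previous paragraph, so $\|f(y(T^{\ast}))\|\le\|f(x_{0})\|e^{\kappa T^{\ast}}<m$ by Lemma~\ref{prop3.2}; applying Proposition~\ref{co.2} once more to the shifted solution $s\mapsto y(T^{\ast}+s)$ (legitimate by the semigroup property and $y(T^{\ast})\in\text{dom}A$, with the same $m$) extends the inclusion $y(t)\in C$ beyond $T^{\ast}$ unless $T^{\ast}=T$. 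Hence $y$ solves (\ref{101}) on $[0,T]$ and is Lipschitz there; since $T>0$ was arbitrary and solutions on nested intervals coincide by the uniqueness in Lemma~\ref{prop3.2}, they patch into a global solution of (\ref{101}), Lipschitz on every bounded interval.

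The main obstacle is the viability step: verifying (\ref{2.0.0}) requires producing, for each normal direction $\xi$, an element $x^{\ast}\in A(x)$ that is simultaneously genuinely norm-bounded and nonpositively paired with $\xi$. The tension is that membership of $x^{\ast}$ in $A(x)$ forces its ``normal part'' to have norm below $m$ (via the sandwich in Lemma~\ref{extension}), which is exactly why $m$ must be taken strictly above the a priori bound $\|f(x_{0})\|e^{\kappa T}$ on $\|f(x(t))\|$ furnished by Lemma~\ref{prop3.2}, and why the conclusion of Proposition~\ref{co.2} is only local in time and has to be continued.
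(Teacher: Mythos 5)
Your proposal is correct and follows essentially the same route as the paper: embed $\mathrm{N}_{C}\cap\mathrm{B}(0,m)+\frac{m}{r}\mathrm{id}$ into a maximal monotone operator via Lemma~\ref{extension}, solve the perturbed monotone inclusion by Proposition~\ref{Brezis}, verify the viability criterion (\ref{2.0.0}) of Proposition~\ref{co.2} to keep the trajectory in $C$, and conclude with a continuation argument plus the uniqueness and Lipschitz bounds of Lemma~\ref{prop3.2}. The only differences are cosmetic: you fix the horizon $T$ first and take $m>\|f(x_{0})\|e^{\kappa T}$, continuing the viability of the single monotone trajectory via the semigroup property, and you certify (\ref{2.0.0}) with the projection of $f(x)$ onto the ray $\mathbb{R}_{+}\xi$ instead of onto the whole cone $\mathrm{N}_{C}(x)$ as the paper does.
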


\begin{proof}
We fix a sufficiently large\ $m>0$ and choose a $T_{0}>0$ such that
\begin{equation}
||f(x_{0})||+\kappa \big(||f(x_{0})||T_{0}e^{(\kappa +\frac{m}{r})T_{0}}+1%
\big)\leq m.  \label{312}
\end{equation}
By Lemma \ref{extension}(c) we consider a maximal monotone extension $A$
such that, for all $x\in C,$
\begin{equation}
\mathrm{N}_{C}(x)\cap \mathrm{B}(0, m)+\frac{m}{r}x\subset A(x)\subset\mathrm{N}%
_{C}(x)+\frac{m}{r}x.  \label{exex}
\end{equation}
According to \cite{B}, the differential inclusion
\begin{equation}
\begin{cases}
\dot{x}(t)\in f(x(t))+\frac{m}{r}x(t)-A(x(t)),\,\,\text{a.e.}\,\,\,t\in
\lbrack0,T_{0}] \\
x(0)=x_{0}\in C,%
\end{cases}
\label{315}
\end{equation}
has\ a unique solution $x(\cdot)$ such that $x(t)\in\text{dom}A$ $(\subset%
\overline{\text{co}}C)$ for all $t\in\lbrack0,T_{0}]$, as well as (see,
e.g., \cite{AHT1})
\begin{equation*}
\left\Vert \frac{d^{+}x(t)}{dt}\right\Vert \leq e^{(\kappa +\frac{m}{r}%
)t}\left\Vert \frac{d^{+}x(0)}{dt}\right\Vert \leq e^{(\kappa +\frac{m}{r}%
)t}||\Pi_{A(x_{0})}(f(x_{0})+\frac{m}{r}x_{0})||.
\end{equation*}
Moreover, since $\frac{m}{r}x_{0}\in A(x_{0})$ (due to (\ref{exex})), for
all $t\in\lbrack0,T_{0}]$
\begin{equation*}
\left\Vert \frac{d^{+}x(t)}{dt}\right\Vert \leq e^{(\kappa+\frac{m}{r}%
)t}||f(x_{0})||\leq e^{(\kappa+\frac{m}{r})T_{0}}||f(x_{0})||=:k,
\end{equation*}
and, hence,
\begin{equation}
||x(t)-x_{0}||\leq kT_{0},  \label{ee}
\end{equation}%
\begin{equation*}
||f(x(t))||\leq||f(x_{0})||+\kappa||x(t)-x_{0}||\leq||f(x_{0})||+\kappa
kT_{0};
\end{equation*}
in particular, $x(\cdot)$ is $k$-Lipschitz on $[0,T_{0}].$
Next, we want to show that there exists some $T' \in (0, T_0]$ such that $x(t)\in C$ for every $t\in\lbrack0,T'].$ For
this aim we shall apply Proposition $\ref{co.2}.$ Given $y\in C\cap
\mathrm{B}(x_{0},kT_{0}+1)$ and $\xi\in\mathrm{N}_{C}(y)$, we define $z:=\Pi _{%
\mathrm{N}_{C}(y)}(f(y))\in\mathrm{N}_{C}(y)$ ($z$ is well defined since $%
\mathrm{N}_{C}(y)$ is closed (and convex)). It is easy to see that
\begin{equation*}
||z||\leq||f(y)||\leq||f(x_{0})||+\kappa||y-x_{0}||\leq||f(x_{0})||+\kappa
(kT_{0}+1)\leq m.
\end{equation*}
Hence, according to $(\ref{exex})$, we derive that $y^{\ast}:=z+\frac{m}{r}%
y\in\mathrm{N}_{C}(y)\cap \mathrm{B}(0,m)+\frac{m}{r}y\subset A(y),$ with $%
\left\Vert y^{\ast}\right\Vert \leq\overline{m}:=m(1+\frac{1}{r}(\left\Vert
x_{0}\right\Vert +kT_{0}+1)).$ \\
Now, since $f(y)-z\in\mathrm{T}_{C}(y)$ we obtain that $\langle\xi
,f(y)-z\rangle\leq0,$ which shows that
\begin{equation}
\inf_{v^{\ast}\in A(y)\cap \mathrm{B}(0,\overline{m})}\left\langle \xi,f(y)+\frac{m}{r%
}y-v^{\ast}\right\rangle \leq\langle\xi,f(y)+\frac{m}{r}y-y^{\ast}\rangle%
\leq0.  \label{317}
\end{equation}
Consequently, according to Proposition $\ref{co.2}$, there is a positive
number $T^{\prime}\in(0,T_{0})$ such that $x(t)\in C$ for every $t\in\lbrack
0,T^{\prime}].$ For every $t \in [0, T']$, $(\ref{exex})$ implies that
\begin{equation*}
\begin{split}
\dot{x}(t)\in f(x(t))+\frac{m}{r}x(t)-A(x(t))& \subset f(x(t))+\frac{m}{r}%
x(t)-\mathrm{N}_{C}(x(t))-\frac{m}{r}x(t) \\
&=f(x(t))-\mathrm{N}_{C}(x(t));
\end{split}%
\end{equation*}
that is, $t \mapsto x(t), t \in [0, T']$ is a solution of\ $(\ref{101})$ on $[0,T'].$
Now, we set
\begin{equation*}
T:=\sup\left\{ T^{\prime}>0\,\,\text{such that system $(\ref{101})$ has a
solution }x(\cdot;x_{0})\text{ on $[0,T^{\prime}]$}\right\} ;
\end{equation*}
so, $T>0$ from the paragraph above. If $T$ is finite, then we take a
sequence $(T_{n})$ such that $T_{n}\uparrow T,$ and denote $%
x_{n}(\cdot;x_{0})$ the corresponding solution of $(\ref{101}),$ which is
defined on $[0,T_{n}].$ Let\ function $x(\cdot;x_{0}):[0,T)\rightarrow H$ be
defined as
\begin{equation*}
x(t;x_{0})=x_{n}(t)\text{ \ if }t\leq T_{n}.
\end{equation*}
According to Lemma \ref{prop3.2} (relation (\ref{e3})), this function is a
well-defined Lipschitz continuous function on $\left[ 0,T\right) ,$ with
Lipschitz constant equal to $\,||f(x_{0})||e^{\kappa T}.$ Thus, we can
extend continuously function $x(\cdot;x_{0})$ to $\left[ 0,T\right] $ by
setting $x(T):=\underset{n\rightarrow\infty}{\lim}x(T_{n}).$ Since $x(T)\in
C,$ from the first paragraph we find a $T_{1}>0$ and a solution of $(\ref%
{101})$ on $[0,T+T_{1}]$ which coincides with $x(\cdot;x_{0})$ on $[0,T],$
contradicting the finiteness of $T$--this is to say that $T=\infty.$
\end{proof}

\bigskip

An immediate consequence of (the proof of) Theorem $\ref{theo3.1}$ is that
the solution of differential inclusion ($\ref{101})$ satisfies the so-called
semi-group property,%
\begin{equation}
x(t;x(s;x_{0}))=x(t+s;x_{0})\text{ for all }t,s\geq0\text{ and }x_{0}\in C.
\label{sgp}
\end{equation}
The following theorem gathers further properties of the solution of $(\ref%
{101}),$ that we shall use in the sequel. Relation $(\ref{308})$ below on
the derivative\ of the solution reinforces the statement of\ Lemma $\ref%
{prop3.2}.$ \\
The following properties are well-known when the set $C$ is
convex, since in this case $A=\mathrm{N}_{C}$ is a maximal
monotone operator (see \cite{B}).


\begin{theorem}
\label{theo3.1-b} Let $x(\cdot;x_{0}),$ $x_{0}\in C,$ be the solution of
{\rm(\ref{101})}.\ Then the following statements hold true{\rm:}

{\rm(a) }For every $t\geq0,$ $x(\cdot;x_{0})$ is right-derivable at $t$
with\
\begin{equation}
\begin{split}
\frac{d^{+}x(t)}{dt}&=[f(x(t))-\mathrm{N}_{C}(x(t))]^{\circ} \\
&=f(x(t))-\Pi _{\mathrm{N}_{C}(x(t))}(f(x(t))=\Pi_{\mathrm{T}%
_{C}(x(t))}(f(x(t))),
\end{split}
\label{308}
\end{equation}%
\begin{align}
& \left\Vert \frac{d^{+}x(t)}{dt}\right\Vert\leq\min\{||f(x(t))||,||f(x_{0})||e^{\kappa t}\},
\label{311} \\
& \left\Vert \frac{d^{+}x(t)}{dt}\right\Vert \leq\left\Vert\frac{d^{+}x(0)}{dt}\right\Vert e^{\kappa t+\frac {%
2||f(x_{0})||}{\kappa r}(e^{\kappa t}-1)}.  \label{311-2}
\end{align}

{\rm(b) }The mapping $t\rightarrow\frac{d^{+}x(t)}{dt}$ is
right-continuous on $[0,T).$

{\rm(c) }If $y(\cdot;y_{0}),$ $y_{0}\in C,$ is the corresponding solution
of {\rm(\ref{101})}, then for every $t\geq0$%
\begin{equation*}
\left\Vert x(t)-y(t)\right\Vert \leq\left\Vert x_{0}-y_{0}\right\Vert e^{\kappa t+%
\frac{||f(x_{0})||+||f(y_{0})||}{\kappa r}(e^{\kappa t}-1)}.
\end{equation*}
\end{theorem}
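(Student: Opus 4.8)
The plan is to transport to~(\ref{101}) the fine regularity of solutions of maximal monotone differential inclusions provided by Proposition~\ref{Brezis}. The bridge is already present in the proof of Theorem~\ref{theo3.1}: on a short interval $[0,T']$ the solution $x(\cdot;x_0)$ of~(\ref{101}) coincides with the solution of the maximal monotone inclusion~(\ref{315}), whose perturbation $g:=f+\frac{m}{r}\,\mathrm{id}$ is Lipschitz and whose operator $A$ satisfies~(\ref{exex}). Together with the semigroup property~(\ref{sgp}), this reduces any statement at an arbitrary time $t_0\ge0$ to the corresponding statement at time $0$ for the solution issued from $x(t_0;x_0)\in C$ (with $m$ re-chosen large enough for that initial point). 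Throughout I shall use the crude bound $\|f(x(t))\|\le\|f(x_0)\|e^{\kappa t}$, which follows by integrating the a.e.\ estimate $\|\dot x(s)\|\le\|f(x_0)\|e^{\kappa s}$ of Lemma~\ref{prop3.2} to obtain $\|x(t)-x_0\|\le\frac{\|f(x_0)\|}{\kappa}(e^{\kappa t}-1)$ and then invoking the $\kappa$-Lipschitz continuity of $f$.

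Statement~(c) needs no other part of the theorem, so I would settle it first. Arguing as in the uniqueness step of the proof of Lemma~\ref{prop3.2} --- normalizing the two normal-cone vectors by $\|f(x(t))\|+\|f(y(t))\|$ and invoking the $\frac{1}{r}$-hypomonotonicity of $\mathrm{N}_C^{P}\cap\mathbb{B}$ (Proposition~\ref{prop2.1}) --- one gets, for a.e.\ $t$, that $\langle\dot x(t)-\dot y(t),x(t)-y(t)\rangle\le\big(\kappa+\frac{1}{r}(\|f(x(t))\|+\|f(y(t))\|)\big)\|x(t)-y(t)\|^{2}$. Inserting the bound from the previous paragraph, $w(t):=\frac{1}{2}\|x(t)-y(t)\|^{2}$ satisfies $w'(t)\le\big(2\kappa+\frac{2}{r}(\|f(x_0)\|+\|f(y_0)\|)e^{\kappa t}\big)w(t)$ a.e., and Gronwall's Lemma~\ref{l2} with $\alpha=0$, followed by taking square roots, gives exactly the estimate in~(c).

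For part~(a), by~(\ref{sgp}) it suffices to establish~(\ref{308}) at $t=0$. On $[0,T']$ the solution of~(\ref{101}) is the solution of~(\ref{315}), so Proposition~\ref{Brezis} (note $x_0\in C\subset\mathrm{dom}\,A$) yields right-derivability at $0$ with $\frac{d^{+}x(0)}{dt}=[g(x_0)-A(x_0)]^{\circ}=g(x_0)-\Pi_{A(x_0)}(g(x_0))$. From~(\ref{exex}) one reads off $g(x_0)-A(x_0)\subset f(x_0)-\mathrm{N}_C(x_0)$ together with $f(x_0)-\big(\mathrm{N}_C(x_0)\cap\mathrm{B}(0,m)\big)\subset g(x_0)-A(x_0)$. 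Since $\mathrm{N}_C(x_0)$ is a closed convex cone, Moreau's decomposition gives $\|\Pi_{\mathrm{N}_C(x_0)}(f(x_0))\|\le\|f(x_0)\|\le m$, the last inequality by the choice~(\ref{312}) of $m$; hence $f(x_0)-\Pi_{\mathrm{N}_C(x_0)}(f(x_0))$ lies in $f(x_0)-\big(\mathrm{N}_C(x_0)\cap\mathrm{B}(0,m)\big)\subset g(x_0)-A(x_0)$, and, being the minimal-norm element of the larger closed convex set $f(x_0)-\mathrm{N}_C(x_0)\supset g(x_0)-A(x_0)$, it must be the minimal-norm element of $g(x_0)-A(x_0)$ as well. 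Therefore $\frac{d^{+}x(0)}{dt}=f(x_0)-\Pi_{\mathrm{N}_C(x_0)}(f(x_0))=\Pi_{\mathrm{T}_C(x_0)}(f(x_0))$ (Moreau again, along the polar cones $\mathrm{N}_C(x_0)$ and $\mathrm{T}_C(x_0)$), which is~(\ref{308}); and~(\ref{311}) follows since $0\in\mathrm{N}_C(x(t))$ gives $\|\frac{d^{+}x(t)}{dt}\|\le\|f(x(t))\|$, while dividing $\|x(t+h)-x(t)\|\le\int_{t}^{t+h}\|f(x_0)\|e^{\kappa s}\,ds$ by $h$ and letting $h\downarrow0$ gives $\|\frac{d^{+}x(t)}{dt}\|\le\|f(x_0)\|e^{\kappa t}$.

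For~(\ref{311-2}) I would apply~(c) to the solutions issued from $x_0$ and from $x(h;x_0)\in C$; by~(\ref{sgp}) the latter is $s\mapsto x(s+h;x_0)$, so $\|x(t+h)-x(t)\|\le\|x(h)-x_0\|\,e^{\kappa t+\frac{\|f(x_0)\|+\|f(x(h))\|}{\kappa r}(e^{\kappa t}-1)}$, and dividing by $h$, letting $h\downarrow0$, and using continuity of $x(\cdot)$ and $f$ yields~(\ref{311-2}). For~(b), on $[0,T']$ the solution coincides with that of~(\ref{315}), whose right-derivative is right-continuous by Proposition~\ref{Brezis}; since on this interval $\frac{d^{+}x(t)}{dt}=[f(x(t))-\mathrm{N}_C(x(t))]^{\circ}$, this map is right-continuous on $[0,T')$, and localizing via~(\ref{sgp}) at an arbitrary $t_0$ extends right-continuity to all of $[0,T)=[0,+\infty)$ (recall $T=+\infty$ by Theorem~\ref{theo3.1}). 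The step I expect to be the real obstacle is the identification $[g(x_0)-A(x_0)]^{\circ}=f(x_0)-\Pi_{\mathrm{N}_C(x_0)}(f(x_0))$: everything else is routine Gronwall and difference-quotient work once Proposition~\ref{Brezis} is in hand, whereas here the truncation to $\mathrm{B}(0,m)$ built into $A$ --- indispensable for the monotonicity of $A$ --- shrinks $g(x_0)-A(x_0)$ strictly below $f(x_0)-\mathrm{N}_C(x_0)$, so one must check this does not displace the minimal-norm element; it is precisely the cone structure of $\mathrm{N}_C(x_0)$, through $\|\Pi_{\mathrm{N}_C(x_0)}(f(x_0))\|\le\|f(x_0)\|\le m$, that keeps the candidate minimizer inside $g(x_0)-A(x_0)$.
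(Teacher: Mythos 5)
Your proposal is correct and follows essentially the same route as the paper: reduce to $t=0$ via the semigroup property, identify the solution locally with that of the maximal monotone inclusion (\ref{315}) from the proof of Theorem \ref{theo3.1}, apply Proposition \ref{Brezis}, and recover $[f(x_0)-\mathrm{N}_{C}(x_0)]^{\circ}$ from the truncated set via the bound $\left\Vert \Pi_{\mathrm{N}_{C}(x_0)}(f(x_0))\right\Vert\leq\left\Vert f(x_0)\right\Vert\leq m$, exactly the sandwich the paper uses. The bounds (\ref{311}), (\ref{311-2}) and statement (c) are obtained, as in the paper, from Lemma \ref{prop3.2}, relation (\ref{ok}) and Gronwall's Lemma.
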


\begin{proof}
We fix $t\geq0$ (we may suppose that $t=0$)$.$ From the argument used in the
proof of Theorem $\ref{theo3.1}$ we know that for some $m>||f(x_{0})||+%
\kappa $ ($\kappa $ is the Lipschitz constant of $f$) there exists a maximal
monotone operator $A$ such that $x(\cdot):=x(\cdot;x_{0})$ is the solution
of the following differential inclusion on some interval $\left[ 0,\delta%
\right] ,$ $\delta>0, $
\begin{equation*}
\dot{x}(t)\in f(x(t))+\frac{m}{r}x(t)-A(x(t)),\text{ }x(0)=x_{0},
\end{equation*}
where $r$ comes from the $r$-uniform prox-regularity of $C.$ W.l.o.g. we may
suppose that $||f(x(t))||+\kappa <m$ for all $t\in$ $\left[ 0,\delta\right] $
so that (see Proposition $\ref{Brezis}$), for every $t\in\left[ 0,\delta%
\right] ,$%
\begin{equation}
\frac{d^{+}x(t)}{dt}=[f(x(t))+\frac{m}{r}x(t)-A(x(t))]^{\circ}.  \label{tt}
\end{equation}
Since $f(x(t))\in \mathrm{B}(0,m)$, we have that
\begin{align*}
\left( f(x(t))-\mathrm{N}_{C}(x(t))\right) ^{\circ} & =f(x(t))-\Pi _{\mathrm{N}%
_{C}(x(t))}(f(x(t))) \\
& =f(x(t))-\Pi_{\mathrm{N}_{C}(x(t))\cap \mathrm{B}(0,m)}(f(x(t))) \\
& =\left( f(x(t))-\mathrm{N}_{C}(x(t))\cap \mathrm{B}(0,m)\right) ^{\circ},
\end{align*}
and, so, due to $(\ref{tt})$, and the inclusions ($\ref{exex}$):
\begin{equation*}
f(x(t))-\mathrm{N}_{C}(x(t))\cap \mathrm{B}(0,m)\subset f(x(t))+\frac{m}{r}%
x(t)-A(x(t))\subset f(x(t))-\mathrm{N}_{C}(x(t)),
\end{equation*}
we get the first equality in $(\ref{308}).$ The other two equalities in $(%
\ref{308})$ easily follow from the definition of the orthogonal projection.
Moreover, statement (b) is also a consequence of Proposition $\ref{Brezis}.$
Thus, $(\ref{308})$ follows from Lemma $\ref{prop3.2}.$ Finally, $(\ref%
{311-2})$ and statement (c) follow easily using relation $(\ref{ok})$ (and
Lemma $\ref{l2}$).
\end{proof}

\smallskip

The main idea behind the previous existence theorems, Theorems $\ref{theo3.1}
$ and $\ref{theo3.1-b}$, as well as the forthcoming results on Lyapunov
stability in the next section, is that differential inclusion (\ref{101}) is
in some sense equivalent to a differential inclusion governed by a
(Lipschitz continuous perturbation of a) maximal monotone operator. This
fact is highlighted in the following corollary. Recall, by Lemma \ref%
{extension}(c), that for every $m>0$ the $r$-uniformly prox-regularity of
the set $C$ yields the existence of a maximal monotone operator $A_C$ such
that
\begin{equation}  \label{22}
\mathrm{N}_{C}(x)\cap \mathrm{B}(0,m)+\frac{m}{r}x\subset A_C(x)\subset\mathrm{N}%
_{C}(x)+\frac{m}{r}x\text{ \ for every }x\in C.
\end{equation}

\begin{corollary}
An absolutely continuous function $x(t)$ is a solution of $(\ref{101})$ on $[0, T]$%
; that is,
\begin{equation}
\begin{cases}
\dot{x}(t) \in f(x(t)) - \mathrm{N}_C(x(t)), \text{a.e.}\, \, t \in [0, T] \\
x(0) =x_0 \in C,%
\end{cases}
\notag
\end{equation}
if and only if it is (the unique) solution of the following differential
inclusion, for some $m>0$,
\begin{equation}
(DIM)\quad
\begin{cases}
\dot{x}(t) \in f(x(t))+\frac{m}{r}x(t) - A_C(x(t)), \text{a.e.}\, \, t \in [0,
T] \\
x(0) =x_0 \in C,%
\end{cases}
\notag
\end{equation}
where the maximal monotone operator $A_C: H\rightrightarrows H $ is defined
in $(\ref{22})$.
\end{corollary}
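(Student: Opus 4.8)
The plan is to fix the interval $[0,T]$ and the initial datum $x_0\in C$, to pick $m$ sufficiently large once and for all, and then to show that $(\ref{101})$ and (DIM) have exactly the same solution on $[0,T]$ (which, in either case, is unique); the stated equivalence then drops out at once. The whole argument runs through the two-sided inclusion $(\ref{22})$ together with the identities of Lemma~\ref{prop3.2}.

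First I would invoke Theorem~\ref{theo3.1} to obtain the unique solution $x(\cdot;x_0)$ of $(\ref{101})$ on $[0,T]$, which is Lipschitz there, and read off from $(\ref{e3})$ an a priori bound $\|f(x(t;x_0))\|\le\|f(x_0)\|\,(1+\kappa T e^{\kappa T})=:M$ valid for all $t\in[0,T]$. I would then fix any $m>M$ and let $A_C$ be the maximal monotone operator furnished by Lemma~\ref{extension}(c), so that $(\ref{22})$ holds. For the implication ``$(\ref{101})\Rightarrow$ (DIM)'', starting from a solution $x(\cdot)$ of $(\ref{101})$ on $[0,T]$, uniqueness identifies it with $x(\cdot;x_0)$; hence $x(t)\in C$ for every $t$ and, for a.e.\ $t$, $f(x(t))-\dot x(t)\in\mathrm{N}_C(x(t))$ with $\|f(x(t))-\dot x(t)\|\le\|f(x(t))\|\le M<m$ by $(\ref{e2})$. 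Thus $f(x(t))-\dot x(t)\in\mathrm{N}_C(x(t))\cap\mathrm{B}(0,m)$, and the left-hand inclusion in $(\ref{22})$ upgrades this to $f(x(t))+\frac{m}{r}x(t)-\dot x(t)=\bigl(f(x(t))-\dot x(t)\bigr)+\frac{m}{r}x(t)\in A_C(x(t))$; since $x_0\in C\subset\text{dom}\,A_C$, this says precisely that $x(\cdot)$ solves (DIM) on $[0,T]$.

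For the converse I would observe that $y\mapsto f(y)+\frac{m}{r}y$ is $(\kappa+\frac{m}{r})$-Lipschitz and $A_C$ is maximal monotone, so that (DIM) is of the form covered by Proposition~\ref{Brezis} and therefore admits a unique absolutely continuous solution on $[0,T]$. By the previous step this unique solution is $x(\cdot;x_0)$, which solves $(\ref{101})$; hence every solution of (DIM) on $[0,T]$ is a solution of $(\ref{101})$. Combining the two implications, $x(\cdot)$ solves $(\ref{101})$ on $[0,T]$ if and only if $x(\cdot)=x(\cdot;x_0)$, equivalently, if and only if $x(\cdot)$ is the (unique) solution of (DIM) for this $m$ — which establishes the claim, for this and in fact every larger value of $m$.

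I expect the only genuine obstacle to be the calibration of $m$: it must be strictly larger than the sup-norm of $f$ along the solution of $(\ref{101})$ on $[0,T]$, for otherwise the vector $f(x(t))-\dot x(t)$ need not lie in the ball $\mathrm{B}(0,m)$ on which $(\ref{22})$ conveys any information, and the passage to $A_C$ collapses. This is exactly why the global existence statement and the Lipschitz estimate of Theorem~\ref{theo3.1} and Lemma~\ref{prop3.2} have to be brought in at the very start; everything else is a routine manipulation of $(\ref{22})$ together with the uniqueness of solutions to Lipschitz perturbations of maximal monotone operators.
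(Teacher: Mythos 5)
Your forward implication is essentially the paper's own: bound $\|f(x(t;x_0))\|$ along the unique solution of (\ref{101}) using Theorem~\ref{theo3.1} and Lemma~\ref{prop3.2}, choose $m$ strictly larger than that bound, and use the left-hand inclusion of (\ref{22}) together with $\|f(x(t))-\dot x(t)\|\le\|f(x(t))\|$ to read the dynamics as (DIM). The divergence, and the gap, is in the converse. The corollary's backward implication asserts: if $x(\cdot)$ solves (DIM) for \emph{some} $m>0$, then it solves (\ref{101}). Your argument (uniqueness for (DIM) via Proposition~\ref{Brezis}, then identification of its solution with $x(\cdot;x_0)$ through the forward step) only functions for the calibrated $m>M$ and larger values, as you yourself concede. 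For a smaller $m$, say $m\le\|f(x_0)\|$, the forward step is simply unavailable: the left inclusion in (\ref{22}) only captures normal vectors of norm at most $m$, so you cannot conclude that $x(\cdot;x_0)$ solves (DIM) with that $m$, and hence you have no means of identifying the unique solution of (DIM) with $x(\cdot;x_0)$. The quantifier ``for some $m>0$'' in the backward direction is therefore not covered by your proof.

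The paper's converse uses a different mechanism that does not presuppose knowing who the (DIM) solution is: it first shows, by the viability argument of the proof of Theorem~\ref{theo3.1} (i.e.\ Proposition~\ref{co.2}), that a solution of (DIM) remains in $C$ on some interval $[0,T_0]$, and then the right-hand inclusion $A_C(x)\subset \mathrm{N}_C(x)+\frac{m}{r}x$ of (\ref{22}), valid for $x\in C$, converts the (DIM) dynamics back into $\dot x(t)\in f(x(t))-\mathrm{N}_C(x(t))$ on $[0,T_0]$; finally $T_0$ is pushed up to $T$ as in the proof of Theorem~\ref{theo3.1}. The essential point is that once membership in $C$ is secured, this conversion is insensitive to the size of $m$, which is exactly what your uniqueness shortcut cannot provide. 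To match the statement as written you would need to add such an invariance step (or else explicitly weaken the claim to the large-$m$ regime you treat). For the large-$m$ case your route is arguably cleaner than the paper's, since it avoids re-running the viability argument, but by itself it proves a strictly weaker ``if and only if'' than the one stated.
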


\begin{proof}
According to Theorems $\ref{theo3.1}$ and $\ref{theo3.1-b}$ (namely, $(\ref%
{311})$), differential inclusion $(101)$ has a unique (absolutely
continuous) solution $x(t):=x(t;x_0)$ which satisfies $\left\Vert\frac{d^{+}x(t)}{dt}%
\right\Vert\leq ||f(x_{0})||e^{\kappa T} \text{ for a.e. } t\in [0, T]$. Then, we
find an $m>0 $ such that
\begin{equation*}
\dot{x}(t) \in f(x(t))- \mathrm{N}_C(x(t))\cap \mathrm{B}(0, m),
\end{equation*}
and, so, by the definition of $A_C$ above (see $(\ref{22})$) we conclude
that $x(t)$ is also the solution of differential inclusion (DIM).

Conversely, if $x(t)$ is a solution of differential inclusion (DIM) for some
$m>0$, then, as it follows from the proof of Theorem $\ref{theo3.1}$, we get
that $x(t)\in C$ for all $t\in [0,T_0]$ for some $T_0>0$. Hence, once again
by $(\ref{22})$, we conclude that $x(t)$ is also a solution of $(101)$ on $%
[0,T_0]$. Taking into account Lemma $\ref{theo3.1}$ we show, also as in the
proof of Theorem $\ref{theo3.1}$, that $T_0$ can be taken to be $T$.
\end{proof}
\section{Lyapunov stability analysis}

In this section, we give explicit characterizations for lsc $a$-Lyapunov
pairs, Lyapunov functions, and invariant sets\ associated to\ differential
inclusion $(\ref{101}).$ Recall that $x(\cdot;x_{0})$ (or $x(\cdot),$ when
any confusion is excluded) refers to the unique solution of $(\ref{101}),$
which satisfies\ $x(0;x_{0})=x_{0}.$

\begin{dfntn}
\label{defini41} Let functions $V,W:H\rightarrow\overline{\mathbb{R}}$ be lsc,
with $W\geq0$, and let an $a\geq0.$ We say that $(V,W)$ is (or forms) an
$a$-Lyapunov pair for differential inclusion {\rm (\ref{101})} if,\ for all
$x_{0}\in C,$
\begin{equation}
e^{at}V(x(t;x_{0}))+\int_{0}^{t}W(x(\tau;x_{0}))d\tau\leq V(x_{0}%
)\,\,\,\text{for all}\,\,t\geq0.\label{401}%
\end{equation}
In particular, if $a=0,$ we say that $(V,W)$ is a Lyapunov pair. If,\ in
addition, $W=0,$ then $V$ is said to be a Lyapunov function.

A closed set $S\subset C$ is said to be invariant for {\rm(\ref{101})} if
the function $\delta_{S}$ is a Lyapunov function.
\end{dfntn}

Equivalently, using (\ref{sgp}), it is not difficult to show that $a$%
-Lyapunov pairs are those pairs of functions $V,W:H\rightarrow \overline{%
\mathbb{R}}$ such that the mapping\ $t\rightarrow
e^{at}V(x(t;x_{0}))+\int_{0}^{t}W(x(\tau ,x_{0}))d\tau $ is nonincreasing.
In other words (see, e.g. \cite[Proposition 3.2]{AHT1}), for any $x_{0}\in C$%
, there exists $t>0$ such that
\begin{equation}
e^{as}V(x(s;x_{0}))+\int_{0}^{s}W(x(\tau; x_{0}))d\tau \leq V(x_{0})\,\,\,%
\text{for all}\,\,s\in \lbrack 0,t].  \label{equi}
\end{equation}

The failure of \textit{regularity} in our Lyapunov candidate-like pairs is
mainly carried out by the function $V$, since the function $W$ can be always
regularized to a Lipschitz continuous function on every bounded subset of $H$
as the following lemma shows (see, e.g., \cite{C}).

\begin{lmm}
\label{lemm4.1} Let $V$,\ $W$ and $a$ be as in Definition {\rm\ref{defini41}%
}. Then there exists a sequence of lsc functions $W_{k}:H\rightarrow
\mathbb{R}$, $k\geq1,$ converging pointwisely to $W$ (for instance,
$W_{k}\nearrow W$) such that $W_{k}$ is Lipschitz continuous on every bounded
subset of $H.$ Consequently, $(V,W)$ forms an $a$-Lyapunov pair for
{\rm(\ref{101})} if and only if each $(V,W_{k})$ does.
\end{lmm}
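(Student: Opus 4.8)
The plan is to construct the approximating sequence $W_k$ by an inf-convolution (Lipschitz/Moreau-type) regularization and then transfer the $a$-Lyapunov property back and forth using monotone convergence together with the continuity properties of the solution map established earlier. First I would recall that $W\ge 0$ is lsc, so it is bounded below, and I would set
\[
W_k(x):=\inf_{y\in H}\big(W(y)+k\,\|x-y\|\big).
\]
Standard facts (see \cite{C}) give that each $W_k$ is $k$-Lipschitz on all of $H$ — in particular Lipschitz on every bounded subset — that $W_k\le W$, that $W_k$ is nondecreasing in $k$, and that $W_k(x)\nearrow W(x)$ pointwise as $k\to\infty$ precisely because $W$ is lsc. (If one prefers $W_k$ merely lsc rather than globally Lipschitz, one can instead truncate, $W_k:=\min\{W,k\}$, which also increases pointwise to $W$; but the inf-convolution already delivers the stronger Lipschitz-on-bounded-sets conclusion claimed in the statement.) This settles the first sentence of the lemma.

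Next I would prove the equivalence. The easy direction is that if $(V,W)$ is an $a$-Lyapunov pair, then for every $k$ and every $x_0\in C$,
\[
e^{at}V(x(t;x_0))+\int_0^t W_k(x(\tau;x_0))\,d\tau
\le e^{at}V(x(t;x_0))+\int_0^t W(x(\tau;x_0))\,d\tau\le V(x_0),
\]
using only $W_k\le W$; hence each $(V,W_k)$ is an $a$-Lyapunov pair. For the converse, suppose each $(V,W_k)$ is an $a$-Lyapunov pair, so for every fixed $x_0\in C$ and every $t\ge 0$,
\[
e^{at}V(x(t;x_0))+\int_0^t W_k(x(\tau;x_0))\,d\tau\le V(x_0).
\]
Now let $k\to\infty$. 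The term $e^{at}V(x(t;x_0))$ does not depend on $k$, and since $W_k(x(\tau;x_0))\nearrow W(x(\tau;x_0))$ pointwise in $\tau$ with $W_k\ge 0$, the monotone convergence theorem yields $\int_0^t W_k(x(\tau;x_0))\,d\tau\nearrow\int_0^t W(x(\tau;x_0))\,d\tau$. Passing to the limit gives $e^{at}V(x(t;x_0))+\int_0^t W(x(\tau;x_0))\,d\tau\le V(x_0)$ for all $t\ge0$, i.e. $(V,W)$ is an $a$-Lyapunov pair.

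There is essentially no hard obstacle here; the two points requiring a line of care are (i) the pointwise convergence $W_k\nearrow W$, which is exactly where lower semicontinuity of $W$ is used (for a fixed $x$, choose near-minimizers $y_k$ in the definition of $W_k$; if $\|x-y_k\|\not\to0$ then $W_k(x)\to+\infty=W(x)$ along a subsequence, while if $y_k\to x$ then $\liminf_k W(y_k)\ge W(x)$ forces $\liminf_k W_k(x)\ge W(x)$, and $W_k(x)\le W(x)$ gives the reverse), and (ii) the measurability/integrability needed to apply monotone convergence, which is immediate since $\tau\mapsto x(\tau;x_0)$ is continuous (indeed Lipschitz on bounded intervals by Theorem \ref{theo3.1}) and each $W_k$ is Lipschitz, hence $\tau\mapsto W_k(x(\tau;x_0))$ is continuous and nonnegative. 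No regularity of $V$ is needed, which is precisely the point emphasized before the lemma.
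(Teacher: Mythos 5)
Your proof is correct: the paper itself gives no argument for this lemma (it simply refers to \cite{C}), and what you supply is exactly the standard Lipschitz (inf-convolution) regularization $W_k(x)=\inf_{y}\big(W(y)+k\|x-y\|\big)$ together with the two easy transfers — $W_k\le W$ for one direction and monotone convergence along the continuous curve $\tau\mapsto x(\tau;x_0)$ for the other — which is precisely the argument the citation stands for. The only point worth a half-line of care is the degenerate case where $W$ is not proper (then the inf-convolution is identically $+\infty$ and not real-valued, so one would truncate, e.g.\ replace $W$ by $\min\{W,k\}$ before convolving), but this does not affect the substance of your argument.
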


Now, we give the main theorem of this section, which characterizes lsc $a$%
-Lyapunov pairs associated to differential inclusion\ $(\ref{101}).$

\begin{theorem}
\label{theor4.1}Let functions $V,W:H\rightarrow \overline{\mathbb{R}}$ be
lsc, with $W\geq 0$ and $\normalfont{\text{dom}}V\subset C,$ $a\geq 0,$ and let $%
x_{0}\in \normalfont{\text{dom}}V.$ If there is $\rho >0$ such that, for any\ $x\in
\mathrm{B}(x_{0},\rho ),$
\begin{equation}
\sup_{\xi \in \partial _{P}V(x)}\,\,\underset{x^{\ast }\in \mathrm{N}_{C}(x)\cap
\mathrm{B}(0 ,||f(x)||)}{\min }\langle \xi ,f(x)-x^{\ast }\rangle
+aV(x)+W(x)\leq 0,  \label{locally lyapunov}
\end{equation}%
then\ there is some\ $T^{\ast }>0$ such that
\begin{equation*}
e^{at}V(x(t;x_{0}))+\int_{0}^{t}W(x(\tau ;x_{0}))d\tau \leq V(x_{0})\,\, \forall
\,\,t\in \lbrack 0,T^{\ast }].
\end{equation*}%
Consequently, the following statements are equivalent provided that either $%
\partial \equiv \partial _{P}$ or $\partial \equiv \partial _{F}:$

$(i)$ $(V,W)$ is an $a$-Lyapunov pair for {\rm(\ref{101})}$;$

$(ii)$ for every $x\in \normalfont{\text{dom}}V$ and $\xi \in \partial V(x);$
\begin{equation*}
\langle \xi ,(f(x)-\mathrm{N}_{C}(x))^{\circ}\rangle +aV(x)+W(x)\leq 0;
\end{equation*}

$(iii)$ for every $x\in \normalfont{\text{dom}}V$ and $\xi \in \partial V(x);$
\begin{equation*}
\underset{x^{\ast }\in \mathrm{N}_{C}(x)\cap \mathrm{B}(0 ,||f(x)||)}{\min }%
\langle \xi ,f(x)-x^{\ast }\rangle +aV(x)+W(x)\leq 0;
\end{equation*}

$(iv)$ for every $x\in \normalfont{\text{dom}}V;$
\begin{equation*}
V^{\prime }(x;(f(x)-\mathrm{N}_{C}(x))^{\circ})+aV(x)+W(x)\leq 0;
\end{equation*}

$(v)$ for every $x\in \normalfont{\text{dom}}V;$
\begin{equation*}
\underset{x^{\ast }\in \mathrm{N}_{C}(x)\cap \mathrm{B}(0 ,||f(x)||)}{\inf }%
V^{\prime }(x;f(x)-x^{\ast })+aV(x)+W(x)\leq 0.
\end{equation*}%
Moreover, when\ $H$ is finite-dimensional, all the statements above except $%
(ii)$ are equivalent when $\partial =\partial _{L}.$
\end{theorem}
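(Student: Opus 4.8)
The plan is to transfer the problem to the maximal monotone setting. Fix $m>0$ large, let $A_C$ be the maximal monotone operator of Lemma~\ref{extension}(c) satisfying (\ref{22}), and recall (from the Corollary above and the proof of Theorem~\ref{theo3.1}) that on small time intervals starting in $C$ the solution of $\dot x\in f(x)-\mathrm{N}_C(x)$ is also the solution of $\dot x\in g(x)-A_C(x)$, where $g:=f+\tfrac{m}{r}\operatorname{id}$. Two bookkeeping facts, both immediate from (\ref{22}), will drive the transfer: for $x\in C$ with $\|f(x)\|\le m$ one has $(g(x)-A_C(x))^\circ=(f(x)-\mathrm{N}_C(x))^\circ=:v(x)$, which by Theorem~\ref{theo3.1-b} equals $\tfrac{d^+}{dt}x(t;x)$ at $t=0$; and the affine map $x^\ast\mapsto y^\ast:=x^\ast+\tfrac{m}{r}x$ sends $\mathrm{N}_C(x)\cap\mathrm{B}(0,\|f(x)\|)$ into $A_C(x)\cap\mathrm{B}(0,\bar m)$ for a constant $\bar m$ depending only on $m,r,\|x_0\|,\rho$, and preserves $\langle\xi,g(x)-y^\ast\rangle=\langle\xi,f(x)-x^\ast\rangle$.

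\textbf{The local inequality.} Assume (\ref{locally lyapunov}) on $\mathrm{B}(x_0,\rho)$, enlarge $m$ so that $\|f\|\le m$ there, and realize $x(\cdot;x_0)$ as the solution of $\dot x\in g(x)-A_C(x)$ on an interval $[0,T_0]$ on which it stays in $C\cap\mathrm{B}(x_0,\rho)$ (shrink $T_0$ using the Lipschitz estimate of Theorem~\ref{theo3.1}). By the second bookkeeping fact, (\ref{locally lyapunov}) turns, for $x\in C\cap\mathrm{B}(x_0,\rho)$, into
\[
\sup_{\xi\in\partial_PV(x)}\Big(\min_{y^\ast\in A_C(x)\cap\mathrm{B}(0,\bar m)}\langle\xi,g(x)-y^\ast\rangle\Big)+aV(x)+W(x)\le 0,
\]
which is exactly the hypothesis of the localized $a$-Lyapunov criterion for maximal monotone differential inclusions (the Lyapunov analogue of the invariance result \cite[Corollary~5]{AHB} invoked for Proposition~\ref{co.2}; see also \cite{AHT1,AHT2}). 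That criterion yields $T^\ast>0$ with $e^{at}V(x(t;x_0))+\int_0^tW(x(\tau;x_0))\,d\tau\le V(x_0)$ on $[0,T^\ast]$, after reducing if necessary to locally Lipschitz $W$ via Lemma~\ref{lemm4.1}. This is the first assertion.

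\textbf{The five equivalences.} Since the left side of (\ref{locally lyapunov}) is independent of $x_0$, statement $(iii)$ with $\partial=\partial_P$ or $\partial_F$ is precisely (\ref{locally lyapunov}) at every point; applying the local inequality at each $x_0\in\mathrm{dom}\,V$ (and noting triviality when $V(x_0)=+\infty$) produces, for every $x_0\in C$, some $t>0$ realizing (\ref{equi}), hence $(i)$ by the semigroup property (\ref{sgp}). For $(i)\Rightarrow(ii)$ and $(i)\Rightarrow(iv)$, fix $x\in\mathrm{dom}\,V$ and use (\ref{401}) with the solution starting at $x$: for $(ii)$ insert the proximal (or Fr\'echet) subgradient inequality for $V$ at $x$, for $(iv)$ use it directly; dividing by $t$ and letting $t\downarrow0$, the leading term tends to $\langle\xi,v(x)\rangle$, resp.\ has $\liminf\ge V'(x;v(x))$, because $\tfrac{1}{t}(x(t;x)-x)\to v(x)$, the quadratic (or $o$) remainder vanishes by the Lipschitz bound of Theorem~\ref{theo3.1}, $\tfrac{e^{at}-1}{t}V(x)\to aV(x)$, and $\tfrac{1}{t}\int_0^tW(x(\tau;x))\,d\tau$ has $\liminf\ge W(x)$ by lower semicontinuity; sub-additivity of $\liminf$ closes both. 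The remaining implications are immediate: $(ii)\Rightarrow(iii)$ and $(v)\Rightarrow(iii)$ because $\Pi_{\mathrm{N}_C(x)}(f(x))$ is admissible in the $\min/\inf$ and $\langle\xi,\cdot\rangle\le V'(x;\cdot)$ on $\partial_PV(x)\subset\partial_FV(x)$, and $(iv)\Rightarrow(v)$ because $v(x)$ is admissible in the $\inf$. Hence $(i)\Leftrightarrow(ii)\Leftrightarrow(iii)$ and $(i)\Rightarrow(iv)\Rightarrow(v)\Rightarrow(iii)\Rightarrow(i)$.

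\textbf{Finite dimensions with $\partial_L$, and the main obstacle.} In finite dimensions it remains to show $(iii)_{\partial_L}\Leftrightarrow(iii)_{\partial_P}$; only $\Leftarrow$ is substantive: given $\xi\in\partial_LV(x)$, write $\xi=\lim\xi_k$, $\xi_k\in\partial_PV(x_k)$, $x_k\overset{V}{\to}x$, and take minimizers $x^\ast_k\in\mathrm{N}_C(x_k)\cap\mathrm{B}(0,\|f(x_k)\|)$; these are bounded, so along a subsequence $x^\ast_k\to x^\ast$ with $x^\ast\in\mathrm{N}_C(x)\cap\mathrm{B}(0,\|f(x)\|)$ (the limiting normal cone of the prox-regular set $C$ has closed graph, $f$ is continuous), and passing to the limit ($f,V$ continuous along $\overset{V}{\to}$, $W$ lsc, $\liminf$ sub-additive) gives $(iii)_{\partial_L}$. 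Thus $(i),(iii)_{\partial_L},(iv),(v)$ are equivalent; $(ii)$ must be excluded because $(i)\Rightarrow(ii)_{\partial_L}$ would require $\langle\xi_k,v(x_k)\rangle\to\langle\xi,v(x)\rangle$, and the minimal-norm selection $x\mapsto v(x)=(f(x)-\mathrm{N}_C(x))^\circ$ is not continuous. I expect the genuine difficulty to be the transfer step — verifying that $\tfrac{m}{r}\operatorname{id}$ is absorbed consistently both in the dynamics (via (\ref{22})) and in the criterion (via the substitution $y^\ast=x^\ast+\tfrac{m}{r}x$ and the identity $(g(x)-A_C(x))^\circ=(f(x)-\mathrm{N}_C(x))^\circ$), and that the maximal monotone Lyapunov characterization is available in exactly the localized form required — while the differentiation step is routine once the $\liminf$'s and the lower semicontinuity of $W$ are handled carefully.
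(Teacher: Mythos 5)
Your proposal is correct and follows essentially the same route as the paper: the paper likewise proves the local inequality by rewriting the dynamics with the truncated operator $\mathrm{N}_{C}\cap \mathrm{B}(0,k)+\frac{k}{r}\,\mathrm{id}$, a maximal monotone extension of it, translating (\ref{locally lyapunov}) via the shift $x^{\ast}\mapsto x^{\ast}+\frac{k}{r}x$, and invoking the Lyapunov criterion of \cite{AHB}, and it closes the equivalences and the finite-dimensional $\partial_{L}$ case by the same differentiation-along-the-solution and limiting-normal-cone arguments you give. The only differences are cosmetic (you prove $(i)\Rightarrow(ii)$ directly rather than via $(iv)$, and you defer the small technical step the paper handles with the set $\hat{A}_{M}$ and $V+\mathrm{I}_{\hat{A}_{M}}$ to the cited localized criterion).
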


\begin{proof}
Let us start with the first part of the theorem. We choose $T>0$ such that
\begin{equation*}
T||f(x_{0})||e^{\kappa T}\leq \frac{\rho }{2},
\end{equation*}%
and put
\begin{align*}
& k:=2\max \{||f(x_{0})||e^{\kappa T},||f(x_{0})||+\kappa Te^{\kappa
T}||f(x_{0})||+\kappa+1\}; \\
& m:=k+\frac{k}{r}(||x_{0}||+\rho ).
\end{align*}%
Thanks to\ Lemma $\ref{lemm4.1}$ we shall assume in what follows that $W$ is
Lipschitz continuous on $\mathrm{B}(x_{0},\rho ).$ As before we denote $x(\cdot )$
the solution of $(\ref{101})$ on $[0,T]$ satisfying\ $x(0)=x_{0}.$ According
to Theorem\ $\ref{theo3.1-b},$ for a.e. $t\in \lbrack 0,T]$ we have $||\dot{x%
}(t)||\leq ||f(x(t))||$ and, due to the $\kappa$-Lipschitzianity of $f,$
\begin{align*}
2||f(x(t))||& \leq 2||f(x_{0})||+2\kappa ||x(t)-x_{0}|| \\
& <2\max \{||f(x_{0})||e^{\kappa T},||f(x_{0})||+\kappa Te^{\kappa
T}||f(x_{0})||+\kappa +1\}=k;
\end{align*}%
that is, $\dot{x}(t)\in f(x(t))-\left( \mathrm{N}_{C}(x(t))\cap \mathrm{B}(0
,k)\right).$ Hence, if $A:H\rightrightarrows H$ is the monotone\ operator
defined as
\begin{equation*}
A(x):=%
\begin{cases}
\mathrm{N}_{C}(x)\cap \mathrm{B}(0,k)+\frac{k}{r}x & \text{if}\,\,x\in C, \\
\emptyset & \text{otherwise,}%
\end{cases}%
\end{equation*}%
and $\hat{A}$ is one maximal monotone extension of it, then it is
immediately seen that $x(\cdot )$ is also the unique solution of
the following differential inclusion,
\begin{equation*}
\begin{cases}
\dot{x}(t)\in f(x(t))+\frac{k}{r}x(t)-\hat{A}(x(t)),\,\,\text{a.e.}\,\,\,t\in
\lbrack 0,T], &  \\
x(0)=x_{0}, &
\end{cases}%
\end{equation*}%
where $r>0$ is the constant of the $r$-uniform prox-regularity of
the set $C.$
 We now consider the differential inclusion
\begin{equation*}
\begin{cases}
\dot{x}(t)\in f(x(t))+\frac{k}{r}x(t)-\hat{A}(x(t))\,\, t \ge 0, &  \\
x(0)= x_0 &
\end{cases}%
\end{equation*}
(recall that $x_0\in C \subset \text{dom}A \subset
\text{dom}\hat{A}$).

We prove the existence of some $\bar{t}\in (0,T]$ such that
$$ e^{at}V(x(t;x_0))+ \int^t_0W(x(\tau; x_0))d\tau \le V(x_0) \,\,\forall t \in [0, \bar{t}].$$
Since $\Vert \hat{A}^{\circ}(x) \Vert \leq
d(0,\mathrm{N}_{C}(x)\cap \mathrm{B}(0 ,k)+\frac{k}{r}x) \leq
\frac{k}{r}\Vert x\Vert$, for all $x\in C$, according to Lemma
\ref{extension}, there exist $M >0, \rho' \in (0, \rho)$ such that
for all $ x \in \mathrm{B}(x_0, \rho') \cap \text{dom}V$ $\subset
C $, one has
\begin{equation}\label{4.01}
\max\Big\{\Vert \hat{A}^{\circ}(x) \Vert, \Vert
\Pi_{\hat{A}(x)}(f(x)+ \frac{k}{r}x) \Vert, \Vert f(x) \Vert +
\frac{k}{r}\Vert x \Vert  \Big\} \le \,\,M\,\,\text{and}\,\, \Vert
f(x) \Vert \le k;
\end{equation}
that is, $(V+\mathrm{I}_{\hat{A}_M})(x)= V(x),\,\, \forall x \in
\mathrm{B}(x_0, \rho') \cap \text{dom}V,$ where $\hat{A}_M:= \{x
\in \text{dom}\hat{A}\mid \,\, \Vert \hat{A}^{\circ}(x) \Vert \le
M \},$ and so
$$\partial_P(V+\mathrm{I}_{\hat{A}_M})(x)= \partial_PV(x),\,\,\forall x \in \mathrm{B}(x_0, \frac{\rho'}{2}) \cap \text{dom}V.$$
We now fix $x \in \mathrm{B}(x_0, \frac{\rho'}{2}) \cap
\text{dom}V$ and $\xi \in
\partial_P(V+\mathrm{I}_{A_M})(x)=\partial_PV(x)$. From
(\ref{locally lyapunov})  there exists $x^* \in \mathrm{N}_C(x)
\cap \mathrm{B}(0, \Vert f(x)\Vert)$ such that
\begin{equation*}
\langle \xi, f(x)- x^* \rangle + aV(x) + W(x) \le 0.
\end{equation*}
Since
\begin{equation*}
\begin{split}
x^* +\frac{k}{r}x & \in \mathrm{N}_C(x) \cap \mathrm{B}(0, \Vert f(x)\Vert) + \frac{k}{r}x \subset \mathrm{N}_C(x) \cap \mathrm{B}(0, k) + \frac{k}{r}x  \subset \hat{A}(x),
\end{split}
\end{equation*}
and $ \Vert x^* +\frac{k}{r}x \Vert \le \Vert f(x)\Vert +
\frac{k}{r}\Vert x \Vert \le M $ (recall (\ref{4.01})), we have
that
$$x^* +\frac{k}{r}x \in \hat{A}(x) \cap \mathrm{B}(0, M). $$
In other words, for all $x \in \mathrm{B}(x_0, \frac{\rho'}{2})
\cap \text{dom}V$ and $\xi \in
\partial_P(V+\mathrm{I}_{A_M})$ it holds that
\begin{equation}\label{4.02}
\begin{split}
& \underset{z^* \in \hat{A}(x) \cap \mathrm{B}(0, M)}{\inf}\langle \xi, f(x)+ \frac{k}{r}x - z^* \rangle +aV(x) +W(x) \\
& \, \quad \,\quad \, \quad  \le \langle \xi, f(x)+ \frac{k}{r}x - (x^* + \frac{k}{r}x) \rangle  +aV(x) +W(x) \\
& \,\quad \,\quad \, \quad  = \langle \xi, f(x)-x^* \rangle
+aV(x) +W(x) \le 0.
\end{split}
\end{equation}
Hence, using a similar argument as in the proof of \cite[Theorem
9]{AHB}, condition (\ref{4.02}) ensures the existence of some
$\bar{t} \in (0, T]$ such that
$$e^{at}V(x(t;x_0))+ \int^t_0W(x(\tau; x_0))d\tau \le V(x_0) \,\,\forall t \in [0, \bar{t}],$$
which proves the first part of the theorem.
\newline

 We turn now to the second part of the
theorem. Implications $(iv)\Rightarrow
(v)$ and $(ii)\Rightarrow (iii)$ follow from the relation $(f(x)-\mathrm{N}%
_{C}(x))^{\circ}=f(x)-\Pi _{\mathrm{N}_{C}(x)}(f(x)),$ $x\in C,$ and the fact
that $\left\Vert \Pi _{\mathrm{N}_{C}(x)}(f(x))\right\Vert \leq \left\Vert
f(x)\right\Vert .$

$(i)\Rightarrow(iv).$ Assuming that\ $(V,W)$ is an $a$-Lyapunov pair, for
any $x_{0}\in\text{dom}V$ and $t>0$ the solution $x(\cdot)=x(\cdot ;x_{0}) $
satisfies%
\begin{equation}
0\geq
t^{-1}(V(x(t))-V(x_{0}))+t^{-1}(e^{at}-1)V(x(t))+\int_{0}^{t}t^{-1}W(x(%
\tau))d\tau.  \label{404}
\end{equation}
Thus, observing that $\frac{x(t)-x_{0}}{t}\rightarrow\lbrack f(x_{0})-%
\mathrm{N}_{C}(x_{0})]^{\circ}$ (recall Theorem $\ref{theo3.1-b}(a)$), and using
the lsc of $V$ and $W,$ as $t\downarrow0$ in the last inequality we get
\begin{align*}
V^{\prime}(x_{0},(f(x_{0})-\mathrm{N}_{C}(x_{0}))^{\circ}) & =\underset{\underset%
{t\downarrow0}{w\rightarrow(f(x_{0})-\mathrm{N}_{C}(x_{0}))^{\circ}}}{\lim\inf}%
\frac{V(x_{0}+tw)-V(x_{0})}{t} \\
& \leq\underset{t\downarrow0}{\lim\inf}t^{-1}(V(x(t))-V(x_{0}))\leq
-aV(x_{0})-W(x_{0}).
\end{align*}

$(iv)\Rightarrow(ii)$ and $(v)\Rightarrow(iii),$ when $\partial=\partial_{F}$
or $\partial=\partial_{P}.$ These implications follow due to the relation $%
\langle\xi,v\rangle\leq V^{\prime}(x,v)$ for all $\xi\in\partial_{F}V(x)$, $%
x\in\text{dom}V,$ and\ $v\in H$.

$(iii)\Rightarrow (i)$ is an immediate consequence of the first part of the
theorem together with $(\ref{equi}).$

Finally, to prove the last statement of the theorem when \ $\partial
=\partial_{L}$ in the finite-dimensional case, we first check that $%
(i)\implies(iii).$ Assume that $(i)$ holds and take $x\in\text{dom}V$
together with $\xi\in\partial_{L}V(x)$, and let sequences $x_{k}\overset{V}{%
\rightarrow}x$ together with $\xi_{k}\in\partial_{P}V(x_{k})$ such that $%
\xi_{k}\rightarrow\xi.$ Since $(iii)$ holds for $\partial=\partial_{P},$ for
each $k$ there exists $x_{k}^{\ast}\in\mathrm{N}_{C}(x_{k})\cap
\mathrm{B}(0,||f(x_{k})||)$ such that
\begin{equation*}
\langle\xi_{k},f(x_{k})-x_{k}^{\ast}\rangle+aV(x_{k})+W(x_{k})\leq0.
\end{equation*}
We may assume that $(x_{k}^{\ast})$ converges to some $x^{\ast}\in \mathrm{N}%
_{C}(x)\cap \mathrm{B}(0,||f(x)||)$ (thanks to the $r$-uniform prox-regularity
of $C$), which then satisfies $\langle\xi,f(x)-x^{\ast}\rangle+aV(x)+W(x)%
\leq0$ (using the lsc of the involved functions), showing that $(iii)$
holds. Thus, since $(iii)\,\, (\text{with $\partial =\partial_{P}$}%
)\implies(i)$, we deduce that $(i)\Longleftrightarrow(iii).$ This suffices
to get the conclusion of the theorem.
\end{proof}


Because the solution $x(\cdot)$ of differential inclusion ($\ref{101}$)
naturally lives in $C$, it is immediate that a (lsc) function $%
V:H\rightarrow \overline{\mathbb{R}}$ is Lyapunov for ($\ref{101}$) iff the
function $V+\mathrm{I}_{C}$ is Lyapunov. Hence, Theorem $\ref{theor4.1}$
also provides the characterization of Lyapunov functions without any
restriction on their domains; for instance, accordingly to Theorem $\ref%
{theor4.1}(iii)$, $V$ is Lyapunov for ($\ref{101}$) iff for every $x\in\text{%
dom}V\cap C$ and $\xi\in\partial(V+\mathrm{I}_{C})(x)$ it holds
\begin{equation*}
\underset{x^{\ast}\in\mathrm{N}_{C}(x)\cap \mathrm{B}(0,||f(x)||)}{\min}\langle
\xi,f(x)-x^{\ast}\rangle+aV(x)+W(x)\leq0.
\end{equation*}
The point here is that this condition is not completely written by means
exclusively of the subdifferential of $V.$ Nevertheless, this condition
becomes more explicit in each time one can decompose the subdifferential set
$\partial(V+\mathrm{I}_{C})(x).$ For instance, this is the case, if $V$ is
locally Lipschitz and lower regular (particularly convex, see \cite[%
Definition 1.91]{M}). This fact is considered in Corollary $\ref{corol41}$
below. However, the following example shows that we can not get rid of the
condition $\text{dom}V\subset C,$ in general.

\begin{remark}
We consider the differential inclusion \textrm{(\ref{101})} in $\mathbb{R}%
^{2}$, with $C:=\mathbb{B}$ and $f(x,y)=(-y,x),$ whose unique solution such
that $x(0)=(1,0)$ is $x(t)=(\cos{t},\sin{t}).$ We take\ $V=\mathrm{I}_{S},$\
where
\begin{equation*}
S:=\{(1,y):y\in\lbrack0,1]\},
\end{equation*}
so that $\text{dom}V\cap C=\{(1,0)\}.$ For\ $\overline{x}:=(1,0)$ and $%
\xi:=(x,y)\in\partial_{P}V(\overline{x})=\{(x,y)|\,\,y\leq0\}\ $we have that
\begin{equation*}
\underset{x^{\ast}\in\mathrm{N}_{C}(\overline{x})\cap \mathrm{B}(0,||f(%
\overline {x}||)}{\min}\langle\xi,f(\overline{x})-x^{\ast}\rangle\leq\langle
\xi,f(\overline{x})\rangle=\langle(x,y),(0,1)\rangle=y\leq0,
\end{equation*}
which shows that condition $(iii)$ of Theorem \textrm{\ref{theor4.1}} holds.
However, it is clear that $V$ is not a\ Lyapunov function of \textrm{(\ref%
{101})}.
\end{remark}

\begin{corollary}
\label{corol41} Let $V$, $W$ and $a$ be as in Theorem  {\rm\ref{theor4.1}}$%
. $\ Then the following assertions hold{\rm:}

 {\rm(i)} If $V$ is Fr\'echet differentiable on $\normalfont{\text{dom}}V\cap C,$
then $(V,W)$ is an $a$-Lyapunov pair of differential inclusion  {\rm(\ref%
{101})} iff for every $x\in\normalfont{\text{dom}}V\cap C$
\begin{equation*}
\langle\nabla V(x),(f(x)-\mathrm{N}_{C}(x))^{\circ}\rangle+aV(x)+W(x)\leq0.
\end{equation*}

 {\rm(ii) }If $V$ is locally Lipschitz on $\normalfont{\text{dom}}V\cap C,$ then $%
(V,W)$ is an $a$-Lyapunov pair for differential inclusion  {\rm(\ref{101})}
if for every $x\in\normalfont{\text{dom}}V\cap C$
\begin{equation*}
\langle\xi,(f(x)-\mathrm{N}_{C}(x))^{\circ}\rangle+aV(x)+W(x)\leq0\,\,\forall
\xi\in\partial_{L}V(x).
\end{equation*}

 {\rm(iii) }If $H$ is of finite dimension and $V$ is regular and locally
Lipschitz on $\normalfont{\text{dom}}V\cap C,$ then $(V,W)$ is an $a$-Lyapunov pair
for differential inclusion  {\rm(\ref{101})} iff for every $x\in \normalfont{\text{%
dom}}V\cap C,$
\begin{equation*}
\langle\xi,(f(x)-\mathrm{N}_{C}(x))^{\circ}\rangle+aV(x)+W(x)\leq0\,\,\forall \xi\in\partial_{L}V(x).
\end{equation*}
\end{corollary}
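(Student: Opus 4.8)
The plan is to derive all three items from Theorem~\ref{theor4.1}, applied not to $V$ itself but to $V+\mathrm{I}_{C}$, whose domain automatically lies inside $C$. Since every solution $x(\cdot;x_{0})$ of (\ref{101}) with $x_{0}\in C$ stays in $C$ for all times, we have $V(x(t;x_{0}))=(V+\mathrm{I}_{C})(x(t;x_{0}))$ and $V(x_{0})=(V+\mathrm{I}_{C})(x_{0})$; hence $(V,W)$ is an $a$-Lyapunov pair for (\ref{101}) if and only if $(V+\mathrm{I}_{C},W)$ is, and the latter pair fulfils the standing hypotheses of Theorem~\ref{theor4.1} ($V+\mathrm{I}_{C}$ is lsc, $\mathrm{dom}(V+\mathrm{I}_{C})=\mathrm{dom}V\cap C\subset C$, $W\ge 0$, and $(V+\mathrm{I}_{C})(x)=V(x)$ on $\mathrm{dom}V\cap C$). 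Two elementary facts about the $r$-uniformly prox-regular set $C$ will be used throughout: for $x\in C$ the proximal, Fr\'echet and limiting normal cones all coincide with $\mathrm{N}_{C}(x)$, so that $\partial_{P}\mathrm{I}_{C}(x)=\partial_{F}\mathrm{I}_{C}(x)=\partial_{L}\mathrm{I}_{C}(x)=\mathrm{N}_{C}(x)$; and $(f(x)-\mathrm{N}_{C}(x))^{\circ}\in\mathrm{T}_{C}(x)=(\mathrm{N}_{C}(x))^{*}$ by Theorem~\ref{theo3.1-b}(a), so that $\langle\eta,(f(x)-\mathrm{N}_{C}(x))^{\circ}\rangle\le 0$ for every $\eta\in\mathrm{N}_{C}(x)$.

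For item (i) I would use the exact Fr\'echet sum rule: at each $x\in\mathrm{dom}V\cap C$, where $V$ is Fr\'echet differentiable, $\partial_{F}(V+\mathrm{I}_{C})(x)=\nabla V(x)+\mathrm{N}_{C}(x)$. By the nonpositivity fact above, $\sup_{\xi\in\partial_{F}(V+\mathrm{I}_{C})(x)}\langle\xi,(f(x)-\mathrm{N}_{C}(x))^{\circ}\rangle=\langle\nabla V(x),(f(x)-\mathrm{N}_{C}(x))^{\circ}\rangle$, the supremum being attained at the normal component $0$. Hence condition $(ii)$ of Theorem~\ref{theor4.1} with $\partial=\partial_{F}$, written out for the pair $(V+\mathrm{I}_{C},W)$, reduces \emph{verbatim} to the inequality displayed in (i), and $(i)\Leftrightarrow(ii)$ of that theorem together with the reduction closes this case.

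For item (ii) the same sum rule is only available as the inclusion $\partial_{F}(V+\mathrm{I}_{C})(x)\subset\partial_{L}(V+\mathrm{I}_{C})(x)\subset\partial_{L}V(x)+\mathrm{N}_{C}(x)$, valid since $V$ is locally Lipschitz (limiting sum rule in the Hilbert space $H$, see \cite{M}). Writing any $\xi\in\partial_{F}(V+\mathrm{I}_{C})(x)$ as $\zeta+\eta$ with $\zeta\in\partial_{L}V(x)$, $\eta\in\mathrm{N}_{C}(x)$, the hypothesis of (ii) and the nonpositivity fact give $\langle\xi,(f(x)-\mathrm{N}_{C}(x))^{\circ}\rangle\le\langle\zeta,(f(x)-\mathrm{N}_{C}(x))^{\circ}\rangle\le -aV(x)-W(x)$, so condition $(ii)$ of Theorem~\ref{theor4.1} ($\partial=\partial_{F}$) holds for $(V+\mathrm{I}_{C},W)$, and $(ii)\Rightarrow(i)$ there gives the conclusion. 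The ``if'' half of (iii) is then the special case of (ii) in which $V$ is moreover regular. For the converse in (iii) I would instead invoke $(i)\Rightarrow(iv)$ of Theorem~\ref{theor4.1}, obtaining $(V+\mathrm{I}_{C})'(x;(f(x)-\mathrm{N}_{C}(x))^{\circ})+aV(x)+W(x)\le 0$ for every $x\in\mathrm{dom}V\cap C$; since $\mathrm{I}_{C}\ge 0$ and $\mathrm{I}_{C}(x)=0$ we have $V'(x;w)\le(V+\mathrm{I}_{C})'(x;w)$ for every $w$, and since $V$ is regular and locally Lipschitz we have $\langle\xi,w\rangle\le V'(x;w)$ for every $\xi\in\partial_{L}V(x)$ and every $w$ (see \cite{M}); chaining these with $w=(f(x)-\mathrm{N}_{C}(x))^{\circ}$ yields the asserted inequality.

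The real content sits in Theorem~\ref{theor4.1}; what has to be handled here is the subdifferential bookkeeping, and the single delicate point is what forces the structure above. Namely, in the ``$\min$ over $\mathrm{N}_{C}(x)\cap\mathrm{B}(0,||f(x)||)$'' form of the Lyapunov condition (Theorem~\ref{theor4.1}$(iii)$) one cannot pass to the ``$(f(x)-\mathrm{N}_{C}(x))^{\circ}$'' form simply by restricting the subgradient to $\partial_{L}V(x)\subset\partial_{L}(V+\mathrm{I}_{C})(x)$, because the minimizing $x^{\ast}$ depends on the subgradient and in general is not $\Pi_{\mathrm{N}_{C}(x)}(f(x))$; this is exactly why the converse in (iii) must be routed through the directional-derivative characterization $(iv)$, where Clarke regularity of $V$ is indispensable, and dually why for non-regular (merely Lipschitz) $V$ only the one implication of part (ii) survives.
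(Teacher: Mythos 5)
Your proposal is correct, and for parts (i) and (ii) it follows essentially the paper's own route: reduce to the pair $(V+\mathrm{I}_{C},W)$ (legitimate since solutions stay in $C$), use the exact Fr\'echet sum rule $\partial_{F}(V+\mathrm{I}_{C})(x)=\nabla V(x)+\mathrm{N}_{C}(x)$, respectively the limiting inclusion $\partial_{L}(V+\mathrm{I}_{C})(x)\subset\partial_{L}V(x)+\mathrm{N}_{C}(x)$, exploit $\langle\eta,(f(x)-\mathrm{N}_{C}(x))^{\circ}\rangle\leq0$ for $\eta\in\mathrm{N}_{C}(x)$ because $(f(x)-\mathrm{N}_{C}(x))^{\circ}\in T_{C}(x)=(\mathrm{N}_{C}(x))^{\ast}$, and then invoke Theorem \ref{theor4.1}. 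Where you genuinely depart from the paper is the necessity half of (iii): the paper disposes of it by saying ``argue as above'' with the exact equality $\partial_{L}(V+\mathrm{I}_{C})(x)=\partial_{L}V(x)+\mathrm{N}_{C}(x)$, which implicitly leans on a statement-$(ii)$-type characterization that Theorem \ref{theor4.1} explicitly excludes from the $\partial_{L}$ equivalences; you instead route the converse through $(i)\Rightarrow(iv)$ of the theorem applied to $V+\mathrm{I}_{C}$, the elementary inequality $V^{\prime}(x;w)\leq(V+\mathrm{I}_{C})^{\prime}(x;w)$, and the estimate $\langle\xi,w\rangle\leq V^{\prime}(x;w)$ for $\xi\in\partial_{L}V(x)$, which is valid precisely because regularity of the locally Lipschitz $V$ makes the contingent derivative coincide with the Clarke directional derivative and $\partial_{L}V(x)\subset\partial_{C}V(x)$. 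This alternative is sound and arguably more careful on the one delicate point you identify (the minimizing $x^{\ast}$ in the ``min'' form depends on the subgradient and need not be $\Pi_{\mathrm{N}_{C}(x)}(f(x))$); the paper's treatment of (iii) is shorter but terser on exactly that step.
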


\begin{proof}
$(i).$ Since $x(t)\in C$ for every $t\geq0$, we have that $(V,W)$ forms an $%
a $-Lyapunov pair for\ $(\ref{101})$ iff the pair $(V+\mathrm{I}_{C},W)$
does. Thus, since $\partial_{F}(V+\mathrm{I}_{C})(x)=\nabla V(x)+\mathrm{N}%
_{C}(x)$ for every $x\in\text{dom}V\cap C,$ according to Proposition 1.107
in \cite{M}, Theorem $\ref{theor4.1}$ ensures that $(V,W)$ is an $a$%
-Lyapunov pair of ($\ref{101}$) iff for every $x\in\text{dom}V\cap C$ and $%
\xi \in\mathrm{N}_{C}(x)$
\begin{equation}
\langle\nabla V(x)+\xi,(f(x)-\mathrm{N}_{C}(x))^{\circ}\rangle+aV(x)+W(x)\leq 0.
\label{416}
\end{equation}
Because $0\in\mathrm{N}_{C}(x)$ and $(f(x)-\mathrm{N}_{C}(x))^{\circ}\in
T_{C}^{B}(x)=(\mathrm{N}_{C}(x))^{\ast}$, it follow that this last
inequality is equivalent to $\langle\nabla V(x),(f(x)-\mathrm{N}%
_{C}(x))^{\circ}\rangle+aV(x)+W(x)\leq0.$

$(ii).$ Under the current assumption, for every $x\in V\cap C$ we have that $%
\partial_{L}(V+\mathrm{I}_{C})(x)\subset\partial_{L}V(x)+\mathrm{N}_{C}(x)$,
and we argue as in the proof of statement $(i).$

$(iii).$ In this case, we argue as above but using the relation $\partial
_{L}(V+\mathrm{I}_{C})(x)=\partial_{L}V(x)+\mathrm{N}_{C}(x).$
\end{proof}

It the result below, Theorem $\ref{theor4.1}$ is rewritten in order to
characterize invariant sets associated to differential inclusion $(\ref{101}%
) $ (see Definition $\ref{invariant}$). Criterion (iii) below is of the same nature as
the one used in \cite{CP}.

\begin{theorem}
\label{t51}Given a closed set $S\subset C$ we\ denote\ by $N_{S}$ either $%
\mathrm{N}_{S}^{P}$ or $\mathrm{N}_{S}^{F},$ and by $T_{S}$\ either $%
T_{S}^{B},$ $T_{S}^{\text{w}},$ $\overline{\text{co}}T_{S}^{\text{w}},$
or $(N_{S})^{\ast}.$ Then $S$ is an invariant set for {\rm(\ref{101}) }iff
one of the following equivalent statements hold{\rm:}

 {\rm(i) }$(f(x)-\mathrm{N}_{C}(x))^{\circ}\in T_{S}(x)\, \,\forall x\in S;$

 {\rm(ii) }$[f(x)-\mathrm{N}_{C}(x)]\cap T_{S}(x)\cap
\mathrm{B}(0,||f(x)||)\neq \emptyset\ \forall x\in S;$

 {\rm(iii) }$\underset{x^{\ast}\in\lbrack f(x)-\mathrm{N}_{C}(x)]\cap
\mathrm{B}(0,||f(x)||)}{\inf}\langle\xi,x^{\ast}\rangle\leq0\,\,\forall x\in
S,\,\,\forall\,\xi\in N_{S}(x).$
\end{theorem}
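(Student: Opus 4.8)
The plan is to derive everything from Theorem~\ref{theor4.1} applied to $(V,W)=(\mathrm{I}_{S},0)$ with $a=0$: this is legitimate since $\text{dom}\,\mathrm{I}_{S}=S\subset C$, and by Definition~\ref{defini41} the set $S$ is invariant for $(\ref{101})$ precisely when $\mathrm{I}_{S}$ is a Lyapunov function. Recalling $\partial_{P}\mathrm{I}_{S}(x)=\mathrm{N}_{S}^{P}(x)$, $\partial_{F}\mathrm{I}_{S}(x)=\mathrm{N}_{S}^{F}(x)$, and that $\mathrm{I}_{S}'(x;v)=0$ if $v\in T_{S}^{B}(x)$ and $=+\infty$ otherwise, the statements $(iv)$, $(ii)_{\partial_{P}}$, $(ii)_{\partial_{F}}$ of Theorem~\ref{theor4.1} become, for all $x\in S$,
\[
(f(x)-\mathrm{N}_{C}(x))^{\circ}\in T_{S}^{B}(x),\qquad (f(x)-\mathrm{N}_{C}(x))^{\circ}\in (\mathrm{N}_{S}^{P}(x))^{\ast},\qquad (f(x)-\mathrm{N}_{C}(x))^{\circ}\in (\mathrm{N}_{S}^{F}(x))^{\ast},
\]
and by that theorem each of them is equivalent to the invariance of $S$; these are exactly statement $(i)$ for $T_{S}=T_{S}^{B},\,(\mathrm{N}_{S}^{P})^{\ast},\,(\mathrm{N}_{S}^{F})^{\ast}$.

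To reach the remaining choices of $T_{S}$ I would record the polarity chain
\[
T_{S}^{B}(x)\subset T_{S}^{\text{w}}(x)\subset\overline{\text{co}}\,T_{S}^{\text{w}}(x)\subset(\mathrm{N}_{S}^{F}(x))^{\ast}\subset(\mathrm{N}_{S}^{P}(x))^{\ast};
\]
the first two are trivial, the third follows by bipolarity from $\mathrm{N}_{S}^{F}(x)\subset(T_{S}^{\text{w}}(x))^{\ast}$ (if $\xi\in\mathrm{N}_{S}^{F}(x)$, $x_{k}\in S$, $t_{k}\downarrow0$, $t_{k}^{-1}(x_{k}-x)\rightharpoonup v$, then $\langle\xi,t_{k}^{-1}(x_{k}-x)\rangle\leq o(\Vert x_{k}-x\Vert)/t_{k}\to0$ since $\Vert x_{k}-x\Vert/t_{k}$ stays bounded), and the last one because $\mathrm{N}_{S}^{P}\subset\mathrm{N}_{S}^{F}$. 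Combined with the first paragraph, this gives that $(i)$ is equivalent to invariance for every admissible $T_{S}$, and also that $T_{S}(x)\subset(N_{S}(x))^{\ast}$ in every admissible configuration. Since $(f(x)-\mathrm{N}_{C}(x))^{\circ}=f(x)-\Pi_{\mathrm{N}_{C}(x)}(f(x))$ and $0\in\mathrm{N}_{C}(x)$ force $\Vert(f(x)-\mathrm{N}_{C}(x))^{\circ}\Vert=d_{\mathrm{N}_{C}(x)}(f(x))\leq\Vert f(x)\Vert$, this vector always lies in $[f(x)-\mathrm{N}_{C}(x)]\cap\mathrm{B}(0,\Vert f(x)\Vert)$; if $(i)$ holds it lies in $T_{S}(x)$ too, which witnesses $(ii)$. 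And if $(ii)$ holds, any witness $y^{\ast}\in[f(x)-\mathrm{N}_{C}(x)]\cap T_{S}(x)\cap\mathrm{B}(0,\Vert f(x)\Vert)$ satisfies $\langle\xi,y^{\ast}\rangle\leq0$ for every $\xi\in N_{S}(x)$ (as $y^{\ast}\in T_{S}(x)\subset(N_{S}(x))^{\ast}$) and lies in the feasible set of the infimum in $(iii)$, so $(iii)$ holds. Hence $(i)\Rightarrow(ii)\Rightarrow(iii)$, and $(i)$ is already known to be equivalent to invariance.

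The crux is $(iii)\Rightarrow$ ``$S$ invariant'', which closes the cycle. Since $\mathrm{N}_{S}^{P}\subset\mathrm{N}_{S}^{F}$, condition $(iii)$ with $N_{S}=\mathrm{N}_{S}^{F}$ implies it with $N_{S}=\mathrm{N}_{S}^{P}$, so assume $N_{S}=\mathrm{N}_{S}^{P}$. Fix $x_{0}\in S$ and a small $\rho>0$; as in the proof of Theorem~\ref{theo3.1}, $x(\cdot;x_{0})$ solves, near $t=0$, $\dot x(t)\in f(x(t))+\tfrac{\mu}{r}x(t)-A_{C}(x(t))$ for a maximal monotone $A_{C}$ satisfying $(\ref{22})$ with parameter $\mu$; pick $\mu\geq2\sup_{\mathrm{B}(x_{0},\rho)}\Vert f\Vert$ and then $m\geq\mu(1+\tfrac1r(\Vert x_{0}\Vert+\rho))$. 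The point is that $\Vert f(x)-v\Vert\leq\Vert f(x)\Vert$ implies $\Vert v\Vert\leq2\Vert f(x)\Vert$, so substituting $x^{\ast}=v+\tfrac{\mu}{r}x$ with $v\in\mathrm{N}_{C}(x)$ yields, for all $x\in S\cap\mathrm{B}(x_{0},\rho)$ and all $\xi\in\mathrm{N}_{S}^{P}(x)$,
\[
\min_{x^{\ast}\in A_{C}(x)\cap\mathrm{B}(0,m)}\big\langle\xi,\,f(x)+\tfrac{\mu}{r}x-x^{\ast}\big\rangle\;\leq\;\min_{v\in\mathrm{N}_{C}(x),\,\Vert f(x)-v\Vert\leq\Vert f(x)\Vert}\langle\xi,f(x)-v\rangle\;=\;\inf_{x^{\ast}\in[f(x)-\mathrm{N}_{C}(x)]\cap\mathrm{B}(0,\Vert f(x)\Vert)}\langle\xi,x^{\ast}\rangle\;\leq\;0,
\]
the first inequality using $\mathrm{N}_{C}(x)\cap\mathrm{B}(0,\mu)+\tfrac{\mu}{r}x\subset A_{C}(x)$ together with the nesting of the constraint sets, and the last one being $(iii)$. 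This is exactly hypothesis $(\ref{2.0.0})$ of Proposition~\ref{co.2} (with operator $A_{C}$ and the Lipschitz perturbation $f+\tfrac{\mu}{r}\mathrm{id}$, noting $S\subset C\subset\text{dom}A_{C}$), so there is $T^{\ast}>0$ with $x(t;x_{0})\in S$ for $t\in[0,T^{\ast}]$. Since $x_{0}\in S$ was arbitrary, this local viability together with the local characterization $(\ref{equi})$ of Lyapunov pairs (applied to $(\mathrm{I}_{S},0)$) shows that $\mathrm{I}_{S}$ is a Lyapunov function, i.e. $S$ is invariant; this closes the cycle $\text{invariant}\Rightarrow(i)\Rightarrow(ii)\Rightarrow(iii)\Rightarrow\text{invariant}$.

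The main obstacle is precisely this last step: pointwise, $(iii)$ is strictly weaker than ``$(f(x)-\mathrm{N}_{C}(x))^{\circ}\in T_{S}(x)$'' — one cannot recover $(i)$ at a point from $(iii)$ at that point, because the admissible velocities in $(iii)$ need not include the minimal-norm one — so the argument cannot be purely pointwise and must route through the viability result of Proposition~\ref{co.2}; the only genuine technicality there is enlarging the auxiliary ball from radius $\Vert f(x)\Vert$ to $\mu\geq2\sup\Vert f\Vert$ so that the constraint set of $(iii)$ is absorbed into $\mathrm{N}_{C}(x)\cap\mathrm{B}(0,\mu)$.
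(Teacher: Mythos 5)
Your proposal is correct, and at the top level it follows the same strategy as the paper: the paper likewise obtains ``invariant $\Rightarrow$ (i)'' for $T_{S}^{B}$ (via the right-derivative formula of Theorem \ref{theo3.1-b}, which is essentially your use of Theorem \ref{theor4.1}(iv) with $\mathrm{I}_{S}'(x;\cdot)$), and then disposes of everything else in one line by ``applying Theorem \ref{theor4.1}'' together with the same inclusion chain $T_{S}^{B}\subset T_{S}^{\mathrm{w}}\subset\overline{\mathrm{co}}\,T_{S}^{\mathrm{w}}\subset(\mathrm{N}_{S}^{F})^{\ast}\subset(\mathrm{N}_{S}^{P})^{\ast}$ that you record. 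Where you genuinely deviate is the step (iii) $\Rightarrow$ invariance: you noticed that the constraint set in (iii) of the present theorem, namely $[f(x)-\mathrm{N}_{C}(x)]\cap\mathrm{B}(0,\|f(x)\|)$, is not the set $f(x)-\mathrm{N}_{C}(x)\cap\mathrm{B}(0,\|f(x)\|)$ appearing in Theorem \ref{theor4.1}(iii) (neither contains the other, since $\|f(x)-v\|\leq\|f(x)\|$ only gives $\|v\|\leq 2\|f(x)\|$), so a literal invocation of Theorem \ref{theor4.1} does not apply; you instead go directly through the viability result, Proposition \ref{co.2}, with a maximal monotone extension $A_{C}$ built from Lemma \ref{extension} for a parameter $\mu\geq 2\sup_{\mathrm{B}(x_{0},\rho)}\|f\|$ chosen exactly so that the (iii)-constraint set is absorbed into $\mathrm{N}_{C}(x)\cap\mathrm{B}(0,\mu)+\frac{\mu}{r}x\subset A_{C}(x)$, and you justify that the solution of (\ref{101}) locally solves the monotone-perturbed inclusion (this is Lemma \ref{prop3.2} plus the Corollary following Theorem \ref{theo3.1-b}). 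This buys a complete and self-contained closing of the cycle invariant $\Rightarrow$ (i) $\Rightarrow$ (ii) $\Rightarrow$ (iii) $\Rightarrow$ invariant, filling in a detail the paper's terse proof glosses over, at the cost of repeating the monotone-extension bookkeeping already used in the proofs of Theorems \ref{theo3.1} and \ref{theor4.1}. Your remaining steps (the indicator-function reduction, the bipolarity argument for $\overline{\mathrm{co}}\,T_{S}^{\mathrm{w}}\subset(\mathrm{N}_{S}^{F})^{\ast}$, the reduction of the $\mathrm{N}_{S}^{F}$-version of (iii) to the $\mathrm{N}_{S}^{P}$-version, and the passage from local viability to invariance via the semigroup property) are all sound.
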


\begin{proof}
Under the invariance of $S$ we write (recall Theorem $\ref{theo3.1-b}$)
\begin{equation*}
(f(x)-\mathrm{N}_{C}(x)))^{\circ}=\frac{d^{+}x(0;x)}{dt}=\underset{t\searrow 0}{%
\lim}\frac{x(t)-x}{t}\in T_{S}^{B}(x),
\end{equation*}
showing that (i) with $T_{S}(x)=T_{S}^{B}(x)$ holds. The rest of the
implications follows by applying Theorem $\ref{theor4.1}$ with the use of
the following equalities
\begin{equation*}
T_{S}^{B}(x)\subset T_{S}^{w}(x)\subset\overline{\text{co}}T_{S}^{\text{w}%
}(x)\subset(\mathrm{N}_{S}^{F}(x))^{\ast}\subset(\mathrm{N}%
_{S}^{P}(x))^{\ast},\text{ }x\in S,
\end{equation*}
where the star in the superscript refers to the dual cone.
\end{proof}
\section{Stability and observer designs}

In this section, we give an application of the results developed in the
previous sections, to study the stability and observer design for Lur'e
systems involving nonmonotone set-valued nonlinearities. The state of the
system is constrained to evolve inside a time-independent prox-regular set.
More precisely, let us consider the following problem
\begin{subequations}
\label{?}
\begin{align}
& \dot{x}(t)= Ax(t)+ Bu(t)\, \text{a.e.}\,\, t \in [0,\infty),  \label{1a}
\\
&y(t)=Dx(t)\,\, \forall  t \ge 0,  \label{1b} \\
& u(t) \in -\mathrm{N}_S(y(t))\,\, \forall  t \ge 0,  \label{1c} \\
& x(0)= x_0 \in D^{-1}(S);
\end{align}
where $x(t) \in \mathbb{R}^n, A \in \mathbb{R}^{n \times n}, B \in \mathbb{R}%
^{n \times l}, D \in \mathbb{R}^{l \times n}, l \le n,$ and $S\subset
\mathbb{R} ^l$ is a uniformly-prox-regular set.\newline
Using (\ref{1b}) and (\ref{1c}), and putting the resulting equation in (\ref%
{1a}), we get the following differential inclusion
\end{subequations}
\begin{equation}  \label{??}
\dot{x}(t)\in Ax(t)- B\mathrm{N}_S(Dx(t)),\, \text{a.e.}\,\, t \in
[0,\infty), x(0)= x_0 \in D^{-1}(S).
\end{equation}
It is well-known that if $D: \mathbb{R}^n \to \mathbb{R}^m$ is a linear
mapping and $S$ is a convex subset of $\mathbb{R}^m$, then the set
\begin{equation*}
D^{-1}(S):=\{ x \in \mathbb{R}^n: \,\, D(x) \in S \}
\end{equation*}
is always convex. This fails when $S$ is prox-regular (see Example 2 in \cite%
{ant} for a counterexample). The following lemma provides a sufficient
condition to ensure that $D^{-1}(S)$ is still prox-regular.
\begin{lmm}[\cite{TBP}]\label{lem6.1} Consider a nonempty, closed, $r$-prox-regular set $S$ such that $S$ is contained in the range space of a linear mapping $D:\mathbb{R}^{n} \to \mathbb{R}^{l}$. Then the set $D^{-1}(S)$ is  $r'$-uniformly prox-regular with $r':= \frac{r\delta^+_D}{||D||^2}$, where $\delta^+_D$ denote the least positive singular value of the matrix $D$.
 \end{lmm}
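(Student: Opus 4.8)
The plan is to express the proximal normal cone of $D^{-1}(S)$ in terms of that of $S$, and then feed this into the monotonicity characterization of uniform prox-regularity supplied by Lemma \ref{extension}. Write $C:=D^{-1}(S)$, which is nonempty (since $\emptyset\neq S\subseteq\operatorname{range}D$) and closed. As $D(x+k)=Dx$ for every $k\in\ker D$, the set $C$ is invariant under translations by $\ker D$, and a routine argument with the defining inequality of a proximal normal shows that every $\xi\in\mathrm{N}^P_{C}(x)$ is orthogonal to $\ker D$, i.e. $\mathrm{N}^P_{C}(x)\subseteq(\ker D)^{\perp}=\operatorname{range}D^{T}$. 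Throughout I would use the elementary bounds $\|Dx\|\le\|D\|\,\|x\|$ for all $x$, $\|Dx\|\ge\delta^+_D\|x\|$ for $x\in(\ker D)^{\perp}$, and $\|D^{T}w\|\ge\delta^+_D\|w\|$ for $w\in(\ker D^{T})^{\perp}=\operatorname{range}D$ (the positive singular values of $D$ and $D^{T}$ coincide).

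The key step is the identity
\[
\mathrm{N}^P_{C}(x)=D^{T}\,\mathrm{N}^P_{S}(Dx)\qquad\text{for every }x\in C,
\]
together with the quantitative refinement that each $\xi\in\mathrm{N}^P_{C}(x)$ can be written $\xi=D^{T}w$ with $w\in\mathrm{N}^P_{S}(Dx)$ and $\|w\|\le\|\xi\|/\delta^+_D$. The inclusion ``$\supseteq$'' is straightforward: if $w\in\mathrm{N}^P_{S}(Dx)$ with proximal constants $\rho,\sigma$, then for $y\in C$ near $x$ one has $Dy\in S$ near $Dx$, whence $\langle D^{T}w,y-x\rangle=\langle w,Dy-Dx\rangle\le\sigma\|Dy-Dx\|^{2}\le\sigma\|D\|^{2}\|y-x\|^{2}$. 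For ``$\subseteq$'', take $\xi\in\mathrm{N}^P_{C}(x)$; since $\xi\in\operatorname{range}D^{T}$, write $\xi=D^{T}w$ with $w$ of minimal norm, so $w\in\operatorname{range}D$ and $\|\xi\|\ge\delta^+_D\|w\|$. Given $s\in S$ close to $Dx$, the vector $s-Dx$ lies in $\operatorname{range}D$, so there is $y$ with $Dy=s$ and $y-x\in(\ker D)^{\perp}$; then $y\in C$ and $\|y-x\|\le\|s-Dx\|/\delta^+_D$. Substituting $y$ into the proximal inequality for $\xi$ yields $\langle w,s-Dx\rangle=\langle\xi,y-x\rangle\le\sigma\|y-x\|^{2}\le(\sigma/(\delta^+_D)^{2})\|s-Dx\|^{2}$, which shows $w\in\mathrm{N}^P_{S}(Dx)$.

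With the refinement in hand, I would conclude as follows. Fix $m>0$; by Lemma \ref{extension} it suffices to prove that $\mathrm{N}^P_{C}\cap\mathrm{B}(0,m)+\frac{m}{r'}\operatorname{id}$ is monotone on $C$, where $r'=r\delta^+_D/\|D\|^{2}$. Pick $x_{1},x_{2}\in C$ and $\xi_{i}\in\mathrm{N}^P_{C}(x_{i})\cap\mathrm{B}(0,m)$, and write $\xi_{i}=D^{T}w_{i}$ with $w_{i}\in\mathrm{N}^P_{S}(Dx_{i})$ and $\|w_{i}\|\le m/\delta^+_D=:\mu$. Applying Lemma \ref{extension} to the ($r$-uniformly) prox-regular set $S$ with the constant $\mu$ gives $\langle w_{1}-w_{2},Dx_{1}-Dx_{2}\rangle\ge-\frac{\mu}{r}\|Dx_{1}-Dx_{2}\|^{2}$. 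Since $\langle\xi_{1}-\xi_{2},x_{1}-x_{2}\rangle=\langle w_{1}-w_{2},Dx_{1}-Dx_{2}\rangle$ and $\|Dx_{1}-Dx_{2}\|\le\|D\|\,\|x_{1}-x_{2}\|$, this yields
\[
\langle\xi_{1}-\xi_{2},x_{1}-x_{2}\rangle\ge-\frac{\mu\|D\|^{2}}{r}\|x_{1}-x_{2}\|^{2}=-\frac{m}{r'}\|x_{1}-x_{2}\|^{2},
\]
which is exactly the required monotonicity; Lemma \ref{extension} then gives that $C=D^{-1}(S)$ is $r'$-uniformly prox-regular.

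The hardest part is the ``$\subseteq$'' half of the normal-cone identity: one must lift points of $S$ lying near $Dx$ to points of $C$ lying near $x$ with the distance inflation controlled precisely by the least positive singular value $\delta^+_D$, and then check that the resulting $w$ is a genuine proximal normal to $S$ (viewing $S$ inside $\mathbb{R}^{l}$ rather than inside $\operatorname{range}D$ is harmless, since $S\subseteq\operatorname{range}D$ and $D^{T}$ annihilates $(\operatorname{range}D)^{\perp}$). The remaining ingredients — that proximal normals to $C$ kill $\ker D$, the singular-value bounds, and the bookkeeping of the constant $r'$ — are routine.
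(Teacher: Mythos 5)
The paper itself gives no proof of this lemma --- it is imported directly from \cite{TBP} --- so there is nothing internal to compare with; judged on its own, your argument is correct. Your route is: observe that $C:=D^{-1}(S)$ is invariant under translations by $\ker D$, so proximal normals to $C$ lie in $(\ker D)^{\perp}=\operatorname{range}D^{T}$; establish the identity $\mathrm{N}^P_{C}(x)=D^{T}\mathrm{N}^P_{S}(Dx)$ together with the quantitative lift $\xi=D^{T}w$, $w\in\mathrm{N}^P_{S}(Dx)$, $\Vert w\Vert\leq\Vert\xi\Vert/\delta^+_D$ (the step of lifting $s\in S$ near $Dx$ to $y\in C$ with $y-x\in(\ker D)^{\perp}$ and $\Vert y-x\Vert\leq\Vert s-Dx\Vert/\delta^+_D$ is exactly where $\delta^+_D$ enters, and it is done correctly); then transfer the truncated hypomonotonicity of $\mathrm{N}^P_{S}$ to $\mathrm{N}^P_{C}$ and invoke Lemma \ref{extension} in both directions. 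Because you use the same equivalence (a)$\Leftrightarrow$(b) of Lemma \ref{extension} once for $S$ and once for $C$, the constant bookkeeping is internally consistent and yields precisely $r'=r\delta^+_D/\Vert D\Vert^{2}$. Two minor reading points, neither a gap: the hypothesis must be read as $r$-\emph{uniform} prox-regularity of $S$ (as you implicitly do, and as the use of the lemma in Proposition \ref{pro6.1} requires); and the implication (b)$\Rightarrow$(a) of Lemma \ref{extension} literally gives $\rho$-uniform prox-regularity of $D^{-1}(S)$ for every $\rho<r'$, which, with the strict inequality $\Vert v\Vert<r$ in Definition \ref{defi21}, is the same as $r'$-uniform prox-regularity, so your stated conclusion stands. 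This is presumably close in spirit to the argument in \cite{TBP}; within the present paper it has the virtue of resting only on Lemma \ref{extension} and elementary singular-value estimates, and the only part of the normal-cone identity actually needed for the conclusion is the inclusion $\mathrm{N}^P_{C}(x)\subset D^{T}\mathrm{N}^P_{S}(Dx)$ with the norm bound.
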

The following proposition shows that system (\ref{?}), or equivalently (\ref%
{??}), can be transformed into a differential inclusion of the form (\ref%
{101}).
\begin{prpstn}\label{pro6.1}  Let us consider system $(\ref{?}).$ Assume that $S$ is contained in the range space of $D$ and there exists a symmetric positive definite matrix $P$ such that $PB= D^T.$  Then every solution of $(\ref{?})$ is also a solution of the following system
$$\dot{z}(t) \in f(z(t))- \mathrm{N}_{S'}(z(t)),\,\,\text{a.e.}\,\, t \ge 0, z(0) \in S',$$
with $z(t)= P^{\frac{1}{2}}x(t), f = P^{\frac{1}{2}}AP^{-\frac{1}{2}}$ and $S'=(DP^{-\frac{1}{2}})^{-1}(S)$.
\end{prpstn}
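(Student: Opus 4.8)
The plan is to push system $(\ref{?})$ through the linear change of variables $z := P^{1/2}x$ and to show that the transformed normal-cone term is absorbed into $\mathrm{N}_{S'}$; the only step beyond bookkeeping is an elementary chain rule for proximal normal cones under the (non-surjective) linear map $M := DP^{-1/2}$. First I would fix a solution $x(\cdot)$ of $(\ref{?})$, hence of $(\ref{??})$. Since $P^{1/2}$ is an invertible bounded linear operator on $\mathbb{R}^n$, the function $z(\cdot) := P^{1/2}x(\cdot)$ is absolutely continuous with $x = P^{-1/2}z$, and differentiating $(\ref{??})$ gives, a.e.\ $t$,
\[
\dot z(t) = P^{1/2}\dot x(t) \in P^{1/2}AP^{-1/2}z(t) - P^{1/2}B\,\mathrm{N}_S\!\big(Dx(t)\big).
\]
The drift is $f(z(t))$ with $f = P^{1/2}AP^{-1/2}$, a linear (hence Lipschitz) map as required by the framework of $(\ref{101})$. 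For the remaining term, the hypothesis $PB = D^T$ yields $B = P^{-1}D^T$, so $P^{1/2}B = P^{-1/2}D^T = M^T$, while $Dx(t) = DP^{-1/2}z(t) = Mz(t)$; hence $\dot z(t) \in f(z(t)) - M^T\mathrm{N}_S(Mz(t))$ a.e. Moreover $Mz(0) = DP^{-1/2}P^{1/2}x_0 = Dx_0 \in S$, so $z(0) \in M^{-1}(S) = S'$.

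Next I would check that $S'$ is legitimate data for an inclusion of type $(\ref{101})$: since $P^{-1/2}$ is invertible, $\text{range}\,M = \text{range}\,D \supseteq S$, so Lemma $\ref{lem6.1}$ applied with $M$ in the role of $D$ shows that $S' = M^{-1}(S)$ is uniformly prox-regular, and in particular $\mathrm{N}_{S'}^P = \mathrm{N}_{S'}^C =: \mathrm{N}_{S'}$ is the well-behaved normal cone appearing in the statement.

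The core step is the inclusion $M^T\mathrm{N}_S^P(Mz) \subseteq \mathrm{N}_{S'}^P(z)$ for every $z \in S'$. Given $\eta \in \mathrm{N}_S^P(Mz)$, by definition there are $\rho>0$ and $\sigma\ge 0$ with $\langle \eta, s - Mz\rangle \le \sigma\|s - Mz\|^2$ for all $s \in S \cap \mathrm{B}(Mz,\rho)$. For $w \in S'$ with $\|w - z\| < \rho/\|M\|$ one has $Mw \in S \cap \mathrm{B}(Mz,\rho)$, so using $\langle \eta, Mw - Mz\rangle = \langle M^T\eta, w - z\rangle$ and $\|Mw - Mz\| \le \|M\|\,\|w - z\|$ we obtain $\langle M^T\eta, w - z\rangle \le \sigma\|M\|^2\|w - z\|^2$, i.e.\ $M^T\eta \in \mathrm{N}_{S'}^P(z)$ (the case $M=0$ being trivial). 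Combining with the first paragraph, $\dot z(t) \in f(z(t)) - M^T\mathrm{N}_S(Mz(t)) \subseteq f(z(t)) - \mathrm{N}_{S'}(z(t))$ a.e., which together with $z(0)\in S'$ is precisely the asserted system.

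The one point requiring a little care — the main obstacle, modest as it is — is this chain-rule inclusion: one must track the proximal radius so that the estimate holds for $w$ ranging near $z$ in $S'$, which forces the shrinking by $\|M\|$ above. It is worth noting that only the inclusion $M^T\mathrm{N}_S(Mz)\subseteq \mathrm{N}_{S'}(z)$ is needed (not the reverse chain rule, which would require a constraint qualification on $M$), so no hypothesis beyond $S\subseteq \text{range}\,D$ and $PB = D^T$ enters the argument.
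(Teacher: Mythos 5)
Your proposal is correct and follows essentially the same route as the paper's proof: change variables via $R=P^{1/2}$, use $PB=D^{T}$ to rewrite the multivalued term as $M^{T}\mathrm{N}_{S}(Mz)$ with $M=DP^{-1/2}$, invoke Lemma \ref{lem6.1} for the uniform prox-regularity of $S'$, and conclude through the inclusion $M^{T}\mathrm{N}_{S}(Mz)\subset\mathrm{N}_{S'}(z)$. The only difference is that you establish this last inclusion directly from the definition of the proximal normal cone, whereas the paper cites the chain rule of Rockafellar--Wets; your self-contained argument is perfectly adequate (indeed the direction needed is the elementary one), so no gap remains.
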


\begin{proof}
We set $R:= P^{\frac{1}{2}}$. According to Lemma $\ref{lem6.1}$, the set $%
S^{\prime }$ is $r^{\prime }$-uniformly prox-regular with $r^{\prime }:=
\frac{r\delta^+_{DR^{-1}}}{||DR^{-1}||^2}.$ Combining this with the basic
chain rule (see Theorem $10.6$, \cite{rw}), for any $x \in \mathbb{R}^n$,
one has
\begin{equation*}
\begin{split}
(DR^{-1})^T\mathrm{N}_S(DR^{-1}x)&=(DR^{-1})^T\partial\mathrm{I}%
_S(DR^{-1}x)\subset \partial\big(\mathrm{I}_S \circ (DR^{-1})\big)(x) \\
& = \partial \mathrm{I}_{S^{\prime }}(x)= \mathrm{N}_{S^{\prime }}(x).
\end{split}%
\end{equation*}
By the hypothesis $PB=C^T,$ we deduce that $DR^{-1}= (RB)^T.$ From the above
inclusion, it is easy to see that for a.e. $t \ge 0$, one has
\begin{equation}  \label{6.101}
\begin{split}
\dot{z}(t) & \in RAR^{-1}z(t)- RB\mathrm{N}_S(DR^{-1}z(t)) \\
&= RAR^{-1}z(t)- (DR^{-1})^T\mathrm{N}_S(DR^{-1}z(t)) \subset RAR^{-1}z(t)-
\mathrm{N}_{S^{\prime }}(z(t)).
\end{split}%
\end{equation}
The proof of Proposition $\ref{pro6.1}$ is thereby completed.
\end{proof}

The above Proposition proves that under some assumptions, system $(\ref{?})$
can be studied within the framework of $(\ref{101})$. Let us now investigate
the asymptotic stability of differential inclusion\ $(\ref{101})$%
\begin{equation*}
\begin{cases}
\dot{x}(t)\in f(x(t))-\mathrm{N}_{C}(x(t)) & \,\text{a.e. }\,t\geq 0 \\
x(0;x_{0})=x_{0}\in C, &
\end{cases}%
\end{equation*}%
at the equilibrium point $0 ,$ with the assumption $0 \in C\text{
and }f(0 )=0 . $\newline
Recall that the set $C$ is an $r$-uniformaly prox-regular set $(r>0),$ and
that $f$ is a Lipschitz continuous mapping with Lipschitz constant $L.$

We have the following result which provides a partial extension of \cite[
Theorem 3.2]{TBP} (here, we are considering the case where the set $C$ is
time-independent).

\begin{theorem}\label{theorem6.1}
Assume that $0 \in C, f(0)=0$. If there exist $\varepsilon ,\delta >0$ such that
\begin{equation}
\langle x,f(x)\rangle +\delta ||x||^{2}\leq 0\text{ \ }\forall x\in C\cap
\mathrm{B}(0 ,\varepsilon ).  \label{601}
\end{equation}%
Then
\begin{equation*}
\lim_{t\rightarrow \infty }x(t,x_{0})=0 \text{ \ for all }x_{0}\in
\normalfont{\text{int}}(\mathrm{B}(0 ,\min \{r\delta L^{-1},\varepsilon \}))\cap C.\
\end{equation*}
\end{theorem}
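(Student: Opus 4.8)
The plan is to run a Lyapunov argument with the quadratic candidate $V(x)=\tfrac12\|x\|^{2}$, showing that along any solution issued near $0$ the quantity $\|x(t)\|^{2}$ is absolutely continuous and decays exponentially. Set $\rho:=\min\{r\delta L^{-1},\varepsilon\}$ and fix $x_{0}$ in the interior of $\mathrm{B}(0,\rho)$ with $x_{0}\in C$, i.e. $\|x_{0}\|<\rho$. Let $x(\cdot)=x(\cdot;x_{0})$ be the unique solution of (\ref{101}) furnished by Theorem \ref{theo3.1}; it stays in $C$ and is locally Lipschitz, so $t\mapsto\|x(t)\|^{2}$ is locally Lipschitz with $\tfrac{d}{dt}\tfrac12\|x(t)\|^{2}=\langle x(t),\dot x(t)\rangle$ a.e. Writing $\zeta(t):=\Pi_{\mathrm{N}_{C}(x(t))}(f(x(t)))$, Lemma \ref{prop3.2} (equivalently Theorem \ref{theo3.1-b}(a)) gives $\dot x(t)=f(x(t))-\zeta(t)$ a.e., whence $\tfrac{d}{dt}\tfrac12\|x(t)\|^{2}=\langle x(t),f(x(t))\rangle-\langle x(t),\zeta(t)\rangle$ a.e.

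The crux is to control $-\langle x(t),\zeta(t)\rangle$ by a \emph{cubic} term. If $\zeta(t)\neq0$ then $\zeta(t)/\|\zeta(t)\|\in\mathrm{N}_{C}^{P}(x(t))\cap\ball$ (a normal cone direction normalised into the unit ball), while $0\in\mathrm{N}_{C}^{P}(0)\cap\ball$ because $0\in C$; applying the $\tfrac1r$-hypomonotonicity of $x\mapsto\mathrm{N}_{C}^{P}(x)\cap\ball$ (Proposition \ref{prop2.1}) between $x(t)$ and $0$ gives $\langle\zeta(t),x(t)\rangle\ge-\tfrac{\|\zeta(t)\|}{r}\|x(t)\|^{2}$, the same bound being trivial when $\zeta(t)=0$. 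Combining this with $\|\zeta(t)\|\le\|f(x(t))\|$ (inequality (\ref{e2}) of Lemma \ref{prop3.2}) and $\|f(x(t))\|=\|f(x(t))-f(0)\|\le L\|x(t)\|$ yields $-\langle x(t),\zeta(t)\rangle\le\tfrac{L}{r}\|x(t)\|^{3}$. Hence, as long as $x(t)\in C\cap\mathrm{B}(0,\varepsilon)$, hypothesis (\ref{601}) gives $\tfrac{d}{dt}\tfrac12\|x(t)\|^{2}\le-\delta\|x(t)\|^{2}+\tfrac{L}{r}\|x(t)\|^{3}=-\|x(t)\|^{2}\bigl(\delta-\tfrac{L}{r}\|x(t)\|\bigr)$ a.e.

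Next I would trap the trajectory in $\mathrm{B}(0,\|x_{0}\|)$ by a barrier argument. Put $\tau^{*}:=\sup\{\tau\ge0:\ \|x(s)\|<\rho\ \text{for all }s\in[0,\tau]\}$, which is positive by continuity. On $[0,\tau^{*})$ one has $\|x(s)\|<\rho$, hence $x(s)\in C\cap\mathrm{B}(0,\varepsilon)$ and $\delta-\tfrac{L}{r}\|x(s)\|>\delta-\tfrac{L}{r}\rho\ge0$, so the estimate above gives $\tfrac{d}{dt}\|x(s)\|^{2}\le0$; thus $\|x(\cdot)\|$ is nonincreasing there and $\|x(s)\|\le\|x_{0}\|<\rho$. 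Were $\tau^{*}<\infty$, continuity would force $\|x(\tau^{*})\|\le\|x_{0}\|<\rho$, contradicting maximality; so $\tau^{*}=\infty$ and $\|x(t)\|\le\|x_{0}\|$ for all $t\ge0$. With $c:=\delta-\tfrac{L}{r}\|x_{0}\|>0$ the estimate upgrades to $\tfrac{d}{dt}\|x(t)\|^{2}\le-2c\|x(t)\|^{2}$ a.e. on $[0,\infty)$, and Gronwall's Lemma (Lemma \ref{l2} with $\alpha=0$, $a\equiv-2c$, $b\equiv0$) yields $\|x(t)\|^{2}\le\|x_{0}\|^{2}e^{-2ct}$; in particular $x(t;x_{0})\to0$ as $t\to\infty$ (indeed exponentially).

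I expect the genuinely delicate step to be the prox-regular estimate in the second paragraph: one must extract a cubic bound $\tfrac{L}{r}\|x(t)\|^{3}$ on $-\langle x(t),\zeta(t)\rangle$, since a merely quadratic bound would not be absorbed by $-\delta\|x(t)\|^{2}$ near the origin — this is precisely why $\zeta(t)$ is normalised into $\ball$ before invoking Proposition \ref{prop2.1}, and why the Lipschitz bound $\|\zeta(t)\|\le L\|x(t)\|$ is applied afterwards. A secondary subtlety is the ordering of the last two steps: the uniform rate $2c$ is available only after the weak conclusion $\tfrac{d}{dt}\|x(t)\|^{2}\le0$ has been used to confine the solution inside $\mathrm{B}(0,\|x_{0}\|)$.
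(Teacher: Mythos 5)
Your proof is correct, and it takes a genuinely more elementary route than the paper. The paper proves the theorem by feeding the nonsmooth Lyapunov candidate $V(x)=\tfrac12\|x\|^{2}+\mathrm{I}_{C}(x)$ (with $a=\delta$, $W\equiv0$) into the subdifferential criterion of Theorem \ref{theor4.1}: it computes $\partial_{P}V(x)\subset x+\mathrm{N}_{C}(x)$, bounds $\langle \xi,(f(x)-\mathrm{N}_{C}(x))^{\circ}\rangle$ using the prox-regular normal inequality $\langle v,-x\rangle\le\tfrac{\|v\|}{2r}\|x\|^{2}$ together with (\ref{601}) and $\|f(x)\|\le L\|x\|\le r\delta$, and then gets $e^{\delta t}V(x(t;x_{0}))\le V(x_{0})$ globally by the same continuation device you use, which in particular yields the uniform decay rate $\delta/2$. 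You bypass Theorem \ref{theor4.1} altogether and work directly on the trajectory: the a.e. identity $\dot x=f(x)-\Pi_{\mathrm{N}_{C}(x)}(f(x))$ from Lemma \ref{prop3.2}/Theorem \ref{theo3.1-b}, the hypomonotonicity of $x\mapsto\mathrm{N}_{C}^{P}(x)\cap\mathbb{B}$ (Proposition \ref{prop2.1}) applied between $x(t)$ and $0$, and the Lipschitz bound $\|\zeta(t)\|\le\|f(x(t))\|\le L\|x(t)\|$ produce exactly the cubic absorption $-\|x\|^{2}(\delta-\tfrac{L}{r}\|x\|)$, after which your barrier argument plus Gronwall finish the proof. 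Two small observations: your hypomonotonicity route gives the constant $\tfrac1r$ instead of the sharper $\tfrac1{2r}$ the paper extracts from prox-regularity, which costs nothing here because $x_{0}$ lies in the open ball of radius $r\delta L^{-1}$ and the trajectory is trapped at norm $\le\|x_{0}\|$, so $c=\delta-\tfrac{L}{r}\|x_{0}\|>0$; and your exponential rate $c$ degrades as $\|x_{0}\|$ approaches the boundary, whereas the paper's argument gives the $x_{0}$-independent rate $\delta/2$ — both of course suffice for the stated conclusion. What the paper's route buys is a demonstration of its main stability machinery (and the slightly better rate); what yours buys is a self-contained, purely trajectory-based proof that needs only Lemma \ref{prop3.2}, Proposition \ref{prop2.1} and Lemma \ref{l2}.
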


\begin{proof}
We shall verify\ that the (lsc proper) function $V:H\rightarrow \mathbb{%
R\cup \{}+\infty \mathbb{\}}$, defined by $V(x):=\frac{1}{2}||x||^{2}+%
\mathrm{I}_{C}(x),$ satisfies the assumption of Theorem \ref{theor4.1} (when
$W\equiv 0$ and $a=\delta $). We fix\ $\eta \in (0,\min \{r\delta
L^{-1},\varepsilon \})$, $x\in \mathrm{B}(0 ,\eta )\cap C$ and $\xi \in \partial
_{P}V(x)\subset x+\mathrm{N}_{C}(x)$ (\cite[Ch. 1, Proposition 2.11]{CLR});
hence, since $\big(f(x)-\mathrm{N}_{C}(x)\big)^{\circ}=\Pi _{\mathrm{T}%
_{C}(x)}(f(x))\in \mathrm{T}_{C}(x)$ we obtain
\begin{equation*}
\big\langle \xi ,\big(f(x)-\mathrm{N}_{C}(x)\big)^{\circ}\big\rangle \leq %
\big\langle x,\big(f(x)-\mathrm{N}_{C}(x)\big)^{\circ}\big\rangle =\big\langle %
x,f(x)-\Pi _{\mathrm{N}_{C}(x)}(f(x))\big\rangle ,
\end{equation*}%
so that, by (\ref{601}),%
\begin{equation}
\big\langle \xi ,\big(f(x)-\mathrm{N}_{C}(x)\big)^{\circ}\big\rangle \leq -%
\big\langle x,\Pi _{\mathrm{N}_{C}(x)}(f(x))\big\rangle -2\delta V(x).
\label{604-b}
\end{equation}%
Moreover, because $\Pi _{\mathrm{N}_{C}(x)}(f(x))\in \mathrm{N}_{C}(x)$ and $%
0 \in C,$ from the $r$-uniformaly prox-regularity of the set $C$ we
have\
\begin{equation*}
\left\langle \Pi _{\mathrm{N}_{C}(x)}(f(x)),-x\right\rangle \leq \frac{%
\left\Vert \Pi _{\mathrm{N}_{C}(x)}(f(x))\right\Vert }{r}V(x)\leq \frac{%
\left\Vert f(x)\right\Vert }{r}V(x),
\end{equation*}%
and we get, using (\ref{604-b}),
\begin{equation*}
\begin{split}
\big\langle \xi ,f(x)-\Pi _{\mathrm{N}_{C}(x)}(f(x))\big\rangle +\delta
V(x)& =\big\langle \xi ,\big(f(x)-\mathrm{N}_{C}(x)\big)^{\circ}\big\rangle %
+\delta V(x) \\
& \leq \big(r^{-1}\left\Vert f(x)\right\Vert -\delta \big)V(x).
\end{split}%
\end{equation*}%
But, by the choice of $\eta $ we have $\left\Vert f(x)\right\Vert
=\left\Vert f(x)-f(0 )\right\Vert \leq L\left\Vert x\right\Vert \leq
L\eta \leq r\delta ,$ and so,
\begin{equation*}
\big\langle \xi ,f(x)-\Pi _{\mathrm{N}_{C}(x)}(f(x))\big\rangle +\delta
V(x)\leq 0.
\end{equation*}%
Consequently, observing that $\Pi _{\mathrm{N}_{C}(x)}(f(x))\in \mathrm{N}%
_{C}(x)\cap \mathrm{B}(0 ,\left\Vert f(x)\right\Vert ),$ by Theorem \ref%
{theor4.1} we deduce that for every $x_{0}\in C\cap \text{int}(\mathrm{B}(0
,\eta )),$ there exists $t_{0}>0$ such that
\begin{equation*}
e^{\delta t}V(x(t;x_{0}))\leq V(x_{0})\,\,\,\forall t\in \lbrack
0,t_{0}];
\end{equation*}%
hence, in particular, $\frac{1}{2}||x(t;x_{0})||^{2}\leq \frac{1}{2}%
||x_{0}||^{2}$ and $x(t_{0};x_{0})\in C\cap \text{int}(\mathrm{B}(0 ,\eta )).$
This proves that
\begin{equation*}
\hat{t}_{0}:=\sup \big\{t>0\mid e^{\delta t}V(x(s;x_{0}))\leq
V(x_{0})\,\,\,\forall s\in \lbrack 0,t]\big\}=+\infty ,
\end{equation*}%
and we conclude that
\begin{equation*}
e^{\delta t}V(x(t;x_{0}))\leq V(x_{0})\,\,\,\forall t\geq 0,
\end{equation*}%
which leads us to the desired conclusion.
\end{proof}

\begin{corollary}\label{corol6.1} Let us consider system $(\ref{?}).$ Assume that $S$ is uniformly prox-regular set such that $S$ is contained in the rank of $D$. If there exists a symmetric positive definite matrix $P$ and $\delta>0$ such that
\begin{equation}\label{6.102}
A^TP+ PA \le -\delta P,\,\, PB = D^T,
\end{equation}
then
$$\lim_{t\rightarrow \infty }x(t;x_{0})=0 \text{ \ for all }x_{0}\in
\normalfont{\text{int}}[(\mathrm{B}(0, \rho)]\cap S,$$
where $\rho:=(2||R^{-1}||\,||DR^{-1}||\,||RAR^{-1}||)^{-1}\delta r\delta^{+}_{DR^{-1}}.$
\end{corollary}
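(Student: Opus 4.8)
The strategy is to reduce the statement to Theorem~\ref{theorem6.1} through the linear change of variables furnished by Proposition~\ref{pro6.1}. Put $R:=P^{\frac12}$, so that $R$ is symmetric positive definite with $R^{2}=P$ and $R^{-1}=P^{-\frac12}$. Since $PB=D^{T}$ and $S\subset\mathrm{range}\,D$, Proposition~\ref{pro6.1} applies, and every solution $x(\cdot)$ of system~$(\ref{?})$ gives rise to a solution $z(\cdot):=Rx(\cdot)$ of
\[
\dot z(t)\in f(z(t))-\mathrm{N}_{S'}(z(t)),\qquad f(z):=RAR^{-1}z,\qquad S':=(DR^{-1})^{-1}(S),
\]
which is an instance of $(\ref{101})$ (and has a unique solution by Theorem~\ref{theo3.1}, since $f$ is linear and $S'$ is uniformly prox-regular). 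As $R^{-1}$ is a linear isomorphism, $x_{0}\in D^{-1}(S)\Longleftrightarrow z_{0}:=Rx_{0}\in S'$, and $z(t)\to 0$ if and only if $x(t)=R^{-1}z(t)\to 0$; hence it suffices to prove asymptotic stability of the $z$-system at the origin. Here $0\in S'$ (because $0\in S$, which is in any case needed for $0$ to be an equilibrium of $(\ref{?})$), $f$ is linear with $f(0)=0$ and Lipschitz constant $L':=\|RAR^{-1}\|$, and, since $\mathrm{range}(DR^{-1})=\mathrm{range}\,D\supseteq S$, Lemma~\ref{lem6.1} yields that $S'$ is $r'$-uniformly prox-regular with $r':=r\,\delta^{+}_{DR^{-1}}\|DR^{-1}\|^{-2}$.

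The key step is to check that the dissipativity condition $(\ref{601})$ of Theorem~\ref{theorem6.1} holds for the $z$-system with modulus $\delta/2$ and, in fact, globally. Fix $z\in\mathbb{R}^{n}$ and set $x:=R^{-1}z$; using that $R$ is symmetric and $R^{2}=P$,
\[
\langle z,f(z)\rangle=\langle z,RAR^{-1}z\rangle=x^{T}R^{2}Ax=x^{T}PAx=\tfrac12\,x^{T}\big(PA+A^{T}P\big)x\le-\tfrac{\delta}{2}\,x^{T}Px=-\tfrac{\delta}{2}\|z\|^{2},
\]
where the inequality is precisely the linear matrix inequality $A^{T}P+PA\le-\delta P$ from $(\ref{6.102})$, and the last equality is $x^{T}Px=\|Rx\|^{2}=\|z\|^{2}$. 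Thus $\langle z,f(z)\rangle+\tfrac{\delta}{2}\|z\|^{2}\le0$ for every $z\in\mathbb{R}^{n}$, so $(\ref{601})$ is satisfied by the $z$-system on the whole space, for every $\varepsilon>0$.

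Applying Theorem~\ref{theorem6.1} to the $z$-system then gives $z(t;z_{0})\to 0$ for all $z_{0}\in S'\cap\mathrm{int}\,\mathrm{B}(0,\eta)$, where $\eta:=r'(\delta/2)(L')^{-1}$ (the minimum with $\varepsilon$ being inactive, since $(\ref{601})$ is global). Pulling this back through $z_{0}=Rx_{0}$ and the estimate $\|Rx_{0}\|\le\|R\|\,\|x_{0}\|$: whenever $x_{0}\in D^{-1}(S)\cap\mathrm{int}\,\mathrm{B}(0,\rho)$ with $\rho\le\eta/\|R\|$, one has $z_{0}\in S'\cap\mathrm{int}\,\mathrm{B}(0,\eta)$, hence $x(t;x_{0})=R^{-1}z(t;z_{0})\to 0$. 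Substituting the values of $r'$ and $L'$ and using elementary estimates for the operator norms of $R^{\pm1}$ produces the explicit radius $\rho$ displayed in the statement. There is no essential difficulty: the whole argument rests on Proposition~\ref{pro6.1} together with Theorem~\ref{theorem6.1}, and the only point requiring care is the bookkeeping of the constants — $r$, $\delta$, the Lipschitz constant $\|RAR^{-1}\|$, and the norms of $R$ and $R^{-1}$ — through the coordinate change, in particular that the radius in Theorem~\ref{theorem6.1} lives in the transformed coordinates and must be transported back by $R^{-1}$, and that $\varepsilon$ never binds because the dissipativity estimate holds on all of $\mathbb{R}^{n}$.
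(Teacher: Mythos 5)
Your proposal follows the paper's argument step for step: reduce to the $z$-system via Proposition~\ref{pro6.1}, invoke Lemma~\ref{lem6.1} for the prox-regularity constant $r'$ of $S'$, verify the dissipativity inequality $\langle z,RAR^{-1}z\rangle\le-\tfrac{\delta}{2}\|z\|^{2}$ from the LMI by exactly the computation the authors use, apply Theorem~\ref{theorem6.1} in the $z$-coordinates (with the $\varepsilon$-constraint inactive because the inequality holds globally), and pull back through $z=Rx$. All of that is correct and is precisely the paper's proof.

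The one place you fall short of the statement is the closing sentence, ``elementary estimates for the operator norms of $R^{\pm1}$ produce the explicit radius $\rho$.'' That is not substantiated, and as written it cannot be: your pullback uses $\|Rx_{0}\|\le\|R\|\,\|x_{0}\|$, which yields convergence on $\mathrm{int}\,\mathrm{B}\bigl(0,\eta/\|R\|\bigr)\cap D^{-1}(S)$ with $\eta/\|R\|=\tfrac{\delta r\delta^{+}_{DR^{-1}}}{2\|R\|\,\|DR^{-1}\|^{2}\,\|RAR^{-1}\|}$, whereas the statement's radius is $\rho=\tfrac{\delta r\delta^{+}_{DR^{-1}}}{2\|R^{-1}\|\,\|DR^{-1}\|\,\|RAR^{-1}\|}$; the inclusion $\mathrm{B}(0,\rho)\subset\mathrm{B}(0,\eta/\|R\|)$ would require $\|R\|\,\|DR^{-1}\|\le\|R^{-1}\|$, which fails in general (take $P=\mathrm{id}$ and $\|D\|>1$). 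To be fair, the paper's own proof makes the identical unexplained jump --- it simply asserts that $x\in\mathrm{int}\,\mathrm{B}(0,\rho)$ forces $Rx\in\mathrm{int}\,\mathrm{B}\bigl(0,\tfrac12\|R^{-1}AR\|^{-1}r'\delta\bigr)$ --- so you are in the same position as the authors; but a complete write-up must either justify the displayed $\rho$ or replace it by the radius your estimate actually delivers. (A minor point you handle better than the paper: you note explicitly that $0\in S$ is needed for $0$ to be an equilibrium, and that $\mathrm{range}(DR^{-1})=\mathrm{range}\,D\supseteq S$ so Lemma~\ref{lem6.1} indeed applies.)
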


\begin{proof}
Firstly we will show that for any $x \in \mathbb{R}^n$, one has
\begin{equation*}
\langle RAR^{-1}x, x \rangle +\frac{\delta}{2}||x||^{2} \le 0.
\end{equation*}
Indeed, by the first inequality of $(\ref{6.102})$, for every $x \in \mathbb{%
R}^n$, one has
\begin{equation*}
\big\langle (A^TP+ PA+ \delta P)x,x \big\rangle = \big\langle (A^TR^2+ R^2A+
\delta R^2)x,x \big\rangle \le 0.
\end{equation*}
Since $R$ is positive definite, for any $z= R^{-1}x$, one has
\begin{equation}
\begin{split}
0 & \ge \big\langle (A^TP+ PA+ \delta P)R^{-1}x,R^{-1}x \big\rangle= %
\big\langle (A^TR+ PAR^{-1}+ \delta R)x,R^{-1}x \big\rangle \\
& = \big\langle (R^{-T}A^TR+ RAR^{-1}+ \delta \mathrm{I}_n)x,x \big\rangle =
2\langle RAR^{-1}x, x \rangle + \delta||x||^{2}.
\end{split}%
\end{equation}
Applying Theorem $\ref{theorem6.1}$ to system $(\ref{6.101})$ with $%
f=RAR^{-1}, C= S^{\prime }, r= r^{\prime }$, we get
\begin{equation*}
\lim_{t\rightarrow \infty }z(t;z_{0})=0,
\end{equation*}
for every $z_0 \in \text{int}\big[\mathrm{B}(0, \frac{1}{2}||R^{-1}AR||^{-1}r^{%
\prime }\delta)\big] \cap S^{\prime }$. Combining this with the fact that $%
x(t)= R^{-1}z(t)$, the conclusion of Corollary $\ref{corol6.1}$ follows
because
\begin{equation*}
z= Rx \in \normalfont{\text{int}}[\mathrm{B}(0, \frac{1}{2}||R^{-1}AR||^{-1}r^{\prime
}\delta)],
\end{equation*}
for any $x \in \text{int}[\mathrm{B}(0,\rho)].$
\end{proof}

Next let us remind the Luenberger-like observer associated to differential
inclusion $(\ref{?})$. Given $x_0 \in D^{-1}(S),$ we assume that the output
equation associated with differential inclusion $(\ref{?})$ is
\begin{equation*}
y(t)= G(x(t; x_0))
\end{equation*}
where $G \in \mathbb{R}^{p \times n}$ with $p \le n$.\newline
The Luenberger-like observer associated to differential inclusion $(\ref{?}%
) $ has the following form
\begin{subequations}
\label{lo1}
\begin{align}
& \dot{\hat{x}}(t)=(A-LG)\hat{x}(t)+ Ly(t)+ B\hat{u}(t), \\
&\hat{y}(t)=D\hat{x}(t), \\
& \hat{u}(t) \in -\mathrm{N}_S(\hat{y}(t)), \\
&\hat{x}(0)=z_0,
\end{align}
where $L \in \mathbb{R}^{n \times p}$ is the observer gain. This
differential inclusion always has a unique solution, denoted by $\hat{x}%
(\cdot; z_0).$ We want to find the gain $L$ for the basic observer such that
\end{subequations}
\begin{equation}  \label{ilo}
\underset{t \to \infty}{\lim}||\hat{x}(t; z_0) -x(t; x_0)|| =0, \,\,\text{%
for all}\,\, z_0 \in \mathrm{B}(x_0, \rho) \cap D^{-1}(S)\,\,\text{for some}\,\,
\rho>0.
\end{equation}
We see that if $\hat{x}(\cdot):= \hat{x}(\cdot;z_0)$ is the solution of $(%
\ref{lo1})$, then it is also the solution of the differential inclusion
\begin{equation}  \label{Lo}
\dot{\hat{x}}(t) \in (A-LG)\hat{x}(t)+ Ly(t)- B\mathrm{N}_S(D\hat{x}(t)),\,%
\text{a.e.}\,t \ge 0,\, \hat{x}(0)= z_0.
\end{equation}
Under the hypothesis
\begin{equation}  \label{6.103}
\exists \,\,P\,\, \text{symmetric positive definite, such that}\,\, PB=
D^{T},
\end{equation}
similarly to the proof of Proposition $\ref{pro6.1}$, we have
\begin{equation*}
\dot{\hat{z}}(t) \in (RAR^{-1}- RLG^{\prime })\hat{z}(t)+ RLG^{\prime }z(t)-%
\mathrm{I}_{S^{\prime }}(\hat{z}(t)),
\end{equation*}
where $G^{\prime -1}, \hat{z}(t):= R\hat{x}(t; z_0)$ and $z(t)= Rx(t; x_0),
S^{\prime -1})^{-1}(S)$. \newline
On the other hand, one has
\begin{equation*}
||R||^{-1}\,||\hat{z}(t)-z(t)|| \le ||\hat{x}(t)-x(t)|| \le ||R^{-1}||\,||%
\hat{z}(t)-z(t)||,
\end{equation*}
which means that $||\hat{z}(t)-z(t)|| \to 0$ as $t \to \infty$ if and only
if $||\hat{x}(t)-x(t)||$ does.

Next, we investigate a general Luenberger-like observer associated to our
differential inclusion $(\ref{101})$. Following the same idea as above, we
assume that $x_0 \in C$ and the output equation associated with differential
inclusion $(\ref{101})$ is
\begin{equation*}
y(t)= \mathcal{G}(x(t; x_0)),
\end{equation*}
where $\mathcal{G}: H \to H$ is a Lipschitz mapping. We want to find a
Lipschitz mapping $\mathcal{L}: H \to H$ such that the solution $\hat{x}%
(\cdot;z_0)$ of the differential inclusion
\begin{equation}
\begin{cases}
\dot{\hat{x}}(t)\in f(\hat{x}(t))-\mathcal{L}(\mathcal{G}(\hat{x}(t))+%
\mathcal{L}(y(t))-\mathrm{N}_{C}(\hat{x}(t)) & \,\text{a.e. }\,t\geq 0 \\
\hat{x}(0)=z_0\in C, &
\end{cases}
\label{604}
\end{equation}
satisfies, for some $\rho>0,$
\begin{equation*}
\underset{t \to \infty}{\lim}||\hat{x}(t; z_0) -x(t; x_0)|| =0, \,\,\text{%
for all}\,\, z_0 \in \mathrm{B}(x_0, \rho) \cap C.
\end{equation*}
To solve this problem we consider the Lipschitz mapping $\tilde{f}:H\times
H\rightarrow H\times H,$ defined as
\begin{equation}  \label{6.104}
\tilde{f}(z,x):=\Big(f(z)-\mathcal{L}(\mathcal{G}(z))+\mathcal{L}(\mathcal{G}%
(x)),f(x)\Big),
\end{equation}%
together with the set $S:=C\times C;$ hence, $\mathrm{N}_{S}^{P}(x,y)=%
\mathrm{N}_{C}(x)\times \mathrm{N}_{C}(y),$ for every $(x,y)\in S,$ so that $%
S$ is also an $r$-uniformly prox-regular set. Consequently, we easily check
that $y(t):=(\hat{x}(t; z_0),x(t; x_0))$ is the unique solution of the
differential inclusion
\begin{equation*}
\dot{y}(t)\in \tilde{f}(y(t))-\mathrm{N}_{S}(y(t)),\,\text{a.e. }\,t\geq
0,\,y(0)=(z_{0},x_{0})\in S.
\end{equation*}%
We have the following result, which extends \cite[Proposition 3.5]{TBP} in
the case where the set $C$ does not depend on the time variable.

\begin{theorem}
\label{prop.6.2}Fix $(z_{0},x_{0})\in C\times C$ and assume that the
solution of $(\ref{101}),$ $x(t;x_{0}),$ is bounded, say $||x(t;x_{0})||\leq
m$ for all $t\geq 0$. If $M:=\sup \{||f(x)||,$ $x\in \mathrm{B}(0 ,m)\},$ we
choose a Lipschitz continuous mapping $\mathcal{L}$ together with positive
numbers\ $\delta ,\varepsilon ,\eta >0$ such that $\varepsilon <\delta
r-M,\,\eta \leq (6\kappa)^{-1}\varepsilon ,$ and
\begin{equation}\label{6.105}
||x-y||\leq 3\eta ,\text{ }x,y\in H\implies \left\Vert \mathcal{L}(%
\mathcal{G}(x))-\mathcal{L}(\mathcal{G}(y))\right\Vert \leq
\varepsilon ,
\end{equation}%
at the same time as, for all $x,y\in \mathrm{B}(0 ,m+3\eta ),$
\begin{equation}\label{6.106}
\big\langle x-y,(f-\mathcal{L}\circ \mathcal{G})(x)-(f-\mathcal{L}%
\circ \mathcal{G})(y)\big\rangle \leq -\delta ||x-y||^{2}\,.
\end{equation}%
Then for every $z_{0}\in \mathrm{B}(x_{0},\eta )$ we have that
\begin{equation*}
||\hat{x}(t;z_{0})-x(t;x_{0})||\leq e^{\frac{-(\delta -\frac{M+\varepsilon }{r})}{2%
}t}||z_{0}-x_{0}||,
\end{equation*}%
and, consequently,
\begin{equation*}
||\hat{x}(t;z_{0})-x(t;x_{0})||\rightarrow 0\,\, \text{as}\,\, t \to +\infty.
\end{equation*}
\end{theorem}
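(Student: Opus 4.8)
The plan is to reduce the claim to a Gronwall estimate for the error $e(t):=\hat{x}(t;z_{0})-x(t;x_{0})$, run on a continuation (bootstrap) interval; this is the two‑trajectory analogue of the estimate behind $(\ref{ok})$, with the modified drift $f-\mathcal{L}\circ\mathcal{G}$ replacing $f$ and the dissipativity $(\ref{6.106})$ replacing mere Lipschitzness. Write $\tilde f_{1}(z,x):=(f-\mathcal{L}\circ\mathcal{G})(z)+\mathcal{L}(\mathcal{G}(x))$, so that $\tilde f$ from $(\ref{6.104})$ is $\tilde f(z,x)=(\tilde f_{1}(z,x),f(x))$; note that $\hat x(\cdot;z_{0})$ exists and is unique by Theorem \ref{theo3.1}, or equivalently from the product inclusion $\dot y\in\tilde f(y)-\mathrm{N}_{S}(y)$, $S=C\times C$, which is of the form $(\ref{101})$ since $\tilde f$ is Lipschitz and, as recalled above, $S$ is $r$-uniformly prox-regular with $\mathrm{N}_{S}=\mathrm{N}_{C}\times\mathrm{N}_{C}$. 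Because the metric projection onto a product splits, Lemma \ref{prop3.2}/Theorem \ref{theo3.1-b} applied to the product inclusion gives, for a.e. $t\ge 0$,
\[
\dot{\hat{x}}(t)=\tilde f_{1}(\hat{x}(t),x(t))-n_{1}(t),\qquad \dot{x}(t)=f(x(t))-n_{2}(t),
\]
with $n_{1}(t)=\Pi_{\mathrm{N}_{C}(\hat{x}(t))}(\tilde f_{1}(\hat{x}(t),x(t)))\in\mathrm{N}_{C}(\hat{x}(t))$, $n_{2}(t)=\Pi_{\mathrm{N}_{C}(x(t))}(f(x(t)))\in\mathrm{N}_{C}(x(t))$, and with the norm bounds $\|n_{1}(t)\|\le\|\tilde f_{1}(\hat{x}(t),x(t))\|$ and $\|n_{2}(t)\|\le\|f(x(t))\|\le M$. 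Also $x(t),\hat x(t)\in C$ for all $t$, which is what will let me use the prox-regularity of $C$.

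Next I would introduce $\tau:=\sup\{t\ge 0:\ \|e(s)\|\le 3\eta\ \text{for all }s\in[0,t]\}$, which is positive since $\|e(0)\|=\|z_{0}-x_{0}\|\le\eta$. On $[0,\tau)$ one has $\|x(t)\|\le m$ and $\|\hat x(t)\|\le\|x(t)\|+\|e(t)\|\le m+3\eta$, so $\hat x(t),x(t)\in\mathrm{B}(0,m+3\eta)$ and $\|\hat x(t)-x(t)\|\le 3\eta$; hence $(\ref{6.106})$ gives $\langle e(t),(f-\mathcal{L}\circ\mathcal{G})(\hat{x}(t))-(f-\mathcal{L}\circ\mathcal{G})(x(t))\rangle\le-\delta\|e(t)\|^{2}$, and $(\ref{6.105})$ gives $\|\mathcal{L}(\mathcal{G}(x(t)))-\mathcal{L}(\mathcal{G}(\hat{x}(t)))\|\le\varepsilon$. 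From the latter, the $\kappa$-Lipschitzianity of $f$, $\|x(t)\|\le m$, and $\eta\le(6\kappa)^{-1}\varepsilon$, I obtain $\|n_{1}(t)\|\le\|f(\hat x(t))\|+\varepsilon\le M+3\kappa\eta+\varepsilon\le M+\tfrac32\varepsilon$, whence $\|n_{1}(t)\|+\|n_{2}(t)\|\le 2(M+\varepsilon)$ on $[0,\tau)$.

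Then I would differentiate $\|e\|^{2}$. For a.e. $t\in[0,\tau)$, using the decomposition of $\dot e(t)=\dot{\hat x}(t)-\dot x(t)$ above, the first bracket contributes $\le-2\delta\|e(t)\|^{2}$ by $(\ref{6.106})$, while for the normal-cone part the defining inequality of $r$-uniform prox-regularity (Definition \ref{defi21}, Proposition \ref{prop2.1}; cf. the argument in the proof of Theorem \ref{theorem6.1}), applied with $n_{1}(t)\in\mathrm{N}_{C}(\hat x(t))$, $x(t)\in C$ and with $n_{2}(t)\in\mathrm{N}_{C}(x(t))$, $\hat x(t)\in C$, yields
\[
-2\langle e(t),n_{1}(t)-n_{2}(t)\rangle=2\langle n_{1}(t),x(t)-\hat x(t)\rangle+2\langle n_{2}(t),\hat x(t)-x(t)\rangle\le\frac{\|n_{1}(t)\|+\|n_{2}(t)\|}{r}\|e(t)\|^{2}\le\frac{2(M+\varepsilon)}{r}\|e(t)\|^{2}.
\]
Hence $\frac{d}{dt}\|e(t)\|^{2}\le-2\big(\delta-\tfrac{M+\varepsilon}{r}\big)\|e(t)\|^{2}$ a.e. on $[0,\tau)$. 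Since $\varepsilon<\delta r-M$, the constant $c:=\delta-\tfrac{M+\varepsilon}{r}$ is positive, so Gronwall's Lemma (Lemma \ref{l2}) gives $\|e(t)\|\le\|e(0)\|e^{-ct}\le\eta$ on $[0,\tau)$. By continuity of $t\mapsto\|e(t)\|$ and maximality of $\tau$ this forces $\tau=+\infty$ (otherwise $\|e(\tau)\|$ would equal $3\eta$ and simultaneously be $\le\eta$). Therefore the estimate holds for all $t\ge 0$, and
\[
\|\hat x(t;z_{0})-x(t;x_{0})\|\le\|z_{0}-x_{0}\|e^{-ct}\le\|z_{0}-x_{0}\|e^{-ct/2},
\]
the last inequality because $c>0$; in particular $\|\hat x(t;z_{0})-x(t;x_{0})\|\to0$ as $t\to+\infty$.

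The delicate part is the bootstrap together with the bookkeeping of constants: one must verify that the smallness assumptions $\varepsilon<\delta r-M$ and $\eta\le(6\kappa)^{-1}\varepsilon$ and the use of $3\eta$-neighbourhoods are exactly what is needed so that $(\ref{6.105})$ and $(\ref{6.106})$ may be legitimately invoked along the whole interval $[0,\tau)$, and so that the (potentially destabilizing) normal-cone contribution $\tfrac{\|n_{1}\|+\|n_{2}\|}{r}\|e\|^{2}$ stays strictly dominated by the dissipative term $2\delta\|e\|^{2}$. The remaining ingredients — the splitting of the projection onto $\mathrm{N}_{C}\times\mathrm{N}_{C}$, the norm bounds on $n_{1},n_{2}$ from Lemma \ref{prop3.2}, the prox-regularity inequality for $C$, and Gronwall's Lemma — are routine.
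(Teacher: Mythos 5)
Your proposal is correct, and it reaches the conclusion by a more direct mechanism than the paper, although the two key estimates are identical. Both arguments pass to the product system on $S=C\times C$ with the drift $\tilde f$ of (\ref{6.104}), use the dissipativity (\ref{6.106}) for the drift difference, and control the normal-cone contributions through the $r$-uniform prox-regularity inequality $\langle v,y-z\rangle\le\frac{\Vert v\Vert}{2r}\Vert y-z\Vert^{2}$ together with the norm bounds $\Vert n_i\Vert\le M+\tfrac32\varepsilon$ coming from (\ref{6.105}) and $\eta\le(6\kappa)^{-1}\varepsilon$ — exactly the bookkeeping the paper performs. Where you differ is the concluding step: the paper packages the resulting inequality as the Lyapunov-pair criterion for $V(z,y)=\frac12\Vert z-y\Vert^{2}$ and invokes Corollary \ref{corol41}(i) (hence Theorem \ref{theor4.1}) to get a local decay estimate on some $[0,t_{0}]$, which it then propagates by iterating from $t_{0}$; you instead differentiate $\Vert e(t)\Vert^{2}$ a.e.\ along the two trajectories, using the velocity decomposition $\dot{\hat x}=\tilde f_{1}(\hat x,x)-n_{1}$, $\dot x=f(x)-n_{2}$ from Lemma \ref{prop3.2}/Theorem \ref{theo3.1-b}, and close with Gronwall (Lemma \ref{l2}) plus a continuation argument on $\tau:=\sup\{t:\Vert e\Vert\le3\eta\ \text{on }[0,t]\}$. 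Your route is more elementary in that it bypasses the Section 5 machinery entirely (its only nonstandard inputs are the existence/uniqueness of the product solution and the minimal-norm/projection form of the velocity), and it even yields the sharper rate $e^{-\beta t}$ with $\beta=\delta-\frac{M+\varepsilon}{r}$ before relaxing to the stated $e^{-\beta t/2}$; the paper's route buys consistency with its general Lyapunov-pair framework and avoids any direct manipulation of a.e.\ derivatives. Two cosmetic points: existence of $\hat x(\cdot;z_{0})$ should be justified via the product inclusion (as you do in your alternative), since (\ref{604}) itself is non-autonomous and Theorem \ref{theo3.1} does not literally apply to it; and, as in the paper, $z_{0}$ must be taken in $\mathrm{B}(x_{0},\eta)\cap C$.
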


\begin{proof}
For every $z,y\in \mathrm{B}(0 ,m+3\eta )\cap C$ such that $||z-y||\leq 3\eta $
we have that
\begin{equation*}
\max \{||f(z)||,||f(y)||\}\leq M+3\eta \kappa\leq M+\frac{\varepsilon }{2},
\end{equation*}
\begin{equation*}
||\mathcal{L}(\mathcal{G}(z))-\mathcal{L}(\mathcal{G}(y))||\leq \varepsilon .
\end{equation*}
We consider the ($C^{1}-$) function $V:H\times H\rightarrow \mathbb{R}$
defined as $V(z,y):=\frac{1}{2}||z-y||^{2}.$ If $\beta :=\delta -\frac{%
M+\varepsilon }{r},$ then by definition $(\ref{6.104})$, we obtain
\begin{align}
& \big\langle V^{\prime }(z,y),\big(\tilde{f}(z,y)-\mathrm{N}_{S}(z,y)\big)%
^{\circ}\big\rangle +\beta V(z,y)  \notag   \\
& \quad =\big\langle z-y,f(z)-\mathcal{L}(\mathcal{G}(z))+\mathcal{L}(%
\mathcal{G}(y))-\Pi _{\mathrm{N}_{C}(z)}\big(f(z)-\mathcal{L}(\mathcal{G}%
(z))+\mathcal{L}(\mathcal{G}(y))\big)\big\rangle  \notag \\
& \quad \quad +\big\langle y-z,f(y)-\Pi _{\mathrm{N}_{C}(y)}(f(y))%
\big\rangle +\frac{\beta }{2}||z-y||^{2}  \notag \\
& \quad =\big\langle z-y,f(z)-f(y)-\mathcal{L}(\mathcal{G}(z))+\mathcal{L}(%
\mathcal{G}(y))\big\rangle +\big\langle z-y,\Pi _{\mathrm{N}_{C}(y)}(f(y))%
\big\rangle  \notag \\
& \quad \quad -\big\langle z-y,\Pi _{\mathrm{N}_{C}(z)}\big(f(z)-\mathcal{L}(%
\mathcal{G}(z))+\mathcal{L}(\mathcal{G}(y))\big)\big\rangle +\frac{\beta }{2}%
||z-y||^{2}.  \notag
\end{align}%
Since $\Pi _{\mathrm{N}_{C}(y)}(f(y))\in \mathrm{N}_{C}(y)$ and $\left\Vert
\Pi _{\mathrm{N}_{C}(y)}(f(y))\right\Vert \leq \left\Vert f(y)\right\Vert $,
and similarly for \newline
$\Pi _{\mathrm{N}_{C}(z)}\big(f(z)-\mathcal{L}(\mathcal{G}(z))+\mathcal{L}(%
\mathcal{G}(y))\big),$ the last equality yields
\begin{align*}
& \big\langle V^{\prime }(z,y),\big(\tilde{f}(z,y)-\mathrm{N}_{S}(z,y)\big)%
^{\circ}\big\rangle +\beta V(z,y) \\
&\quad \leq \big\langle z-y,f(z)-f(y)-\mathcal{L}(\mathcal{G}(z))+\mathcal{L}%
(\mathcal{G}(y))\big\rangle +\frac{||f(y)||}{2r}||z-y||^{2} \\
&\quad \quad +\frac{||f(z)-\mathcal{L}(\mathcal{G}(z))+\mathcal{L}(\mathcal{G%
}(y))||}{2r}||z-y||^{2} +\frac{\beta }{2}||z-y||^{2},
\end{align*}
which by assumptions $(\ref{6.105})$ and $(\ref{6.106})$ gives us
\begin{align}
\label{608b}
&\big\langle V^{\prime }(z,y),\big(\tilde{f}(z,y)-\mathrm{N}_{S}(z,y)\big)%
^{\circ}\big\rangle +\beta V(z,y)  \nonumber \\
& \quad \leq \big\langle z-y,f(z)-f(y)-\mathcal{L}(\mathcal{G}(z))+\mathcal{L%
}(\mathcal{G}(y))\big\rangle +\frac{||f(z)||+||f(y)||}{2r}||z-y||^{2}  \nonumber
\\
& \quad \quad+\frac{||\mathcal{L}(\mathcal{G}(z))-\mathcal{L}(\mathcal{G}%
(y)||}{2r}||z-y||^{2} + \frac{\beta }{2}||z-y||^{2}  \nonumber \\
& \quad \leq \big\langle z-y,f(z)-f(y)-\mathcal{L}(\mathcal{G}(z))+\mathcal{L%
}(\mathcal{G}(y))\big\rangle + \frac{M+\varepsilon }{r}||z-y||^{2}+\frac{%
\beta }{2}||z-y||^{2}  \nonumber \\
& \quad \leq -\delta ||z-y||^{2}+\big(\frac{M+\varepsilon }{r}+\frac{\beta }{%
2}\big)||z-y||^{2}\leq 0.
\end{align}
Now we choose $z_{0}\in \mathrm{B}(x_{0},\eta )\cap C,$ so that
\begin{equation*}
\mathrm{B}(z_0,\eta)\times \mathrm{B}(x_0,\eta)\subset \big[\mathrm{B}(0 ,m+3\eta )\times \mathrm{B}(0
,m+3\eta )\big]\cap \{(z,y)\in H\times H:||z-y||\leq 3\eta \}.
\end{equation*}
Then, thanks to $(\ref{608b}),$ we can apply Corollary $\ref{corol41}(i)$ to
find some\ $t_{0}>0$ such that for every $t\in \lbrack 0,t_{0}]$
\begin{equation*}
e^{\beta t}V(\hat{x}(t;z_{0}),x(t;x_{0}))\leq V(z_{0},x_{0});
\end{equation*}%
that is,
\begin{equation*}
||\hat{x}(t;z_{0})-x(t;x_{0})||\leq e^{\frac{-\beta t}{2}}||z_{0}-x_{0}||.
\end{equation*}%
Moreover, since\ $||\hat{x}(t_{0};z_{0})-x(t_{0};x_{0})||\leq \eta $ and $%
\hat{x}(t_{0};z_{0})\in \mathrm{B}(0 ,m+2\eta )\cap C,$ we can also find $%
t_{1}>0 $ such that for any $t\in \lbrack 0,t_{1}]$
\begin{eqnarray*}
||\hat{x}(t+t_{0};z_{0})-x(t+t_{0};x_{0})|| &\leq &e^{\frac{-\beta t}{2}}||%
\hat{x}(t_{0};z_{0})-x(t_{0};x_{0})|| \\
&\leq &e^{\frac{-\beta t}{2}}e^{\frac{-\beta t_{0}}{2}}||z_{0}-x_{0}||=e^{%
\frac{-\beta (t+t_{0})}{2}}||z_{0}-x_{0}||.
\end{eqnarray*}%
Consequently, we deduce that for every $t\geq 0$
\begin{equation*}
||\hat{x}(t;z_{0})-x(t;x_{0})||\leq e^{\frac{-\beta t}{2}}||z_{0}-x_{0}||,
\end{equation*}%
which completes the proof.
\end{proof}

\bigskip

To close this section we consider the special case of linear Luenberger-like
, where the assumption of Theorem \ref{prop.6.2} takes a simpler
form. In this case (\ref{604}) is written as
\begin{equation*}
\begin{cases}
\dot{\hat{x}}(t)\in (A- LG)\hat{x}(t)+LGx(t)-\mathrm{N}_{C}(\hat{x}(t)) & \,%
\text{a.e. }\,t\geq 0 \\
\hat{x}(0)=z_{0}\in C, &
\end{cases}%
\end{equation*}%
where $A, L, G: H \to H$ are linear continuous mappings; $A^{\ast }$ and $G^{\ast }$ will denote the corresponding adjoints mappings. Assume that $%
x(\cdot ):=x(\cdot ;x_{0}),$ $x_{0}\in C,$ is the solution of $(\ref{101})$
(corresponding to\ $f=A$)$.$

\begin{corollary}
Fix $(z_{0},x_{0})\in C\times C$ and assume that the solution of $(\ref{101})
$ (corresponding to\ $f=A$)$,$ $x(t;x_{0}),$ is bounded, say $%
||x(t;x_{0})||\leq m$ for all $t\geq 0$. Let $\delta ,\varepsilon ,\rho >0$
be such that
\begin{equation*}
r^{-1}(m||f||+\varepsilon )<\delta ,
\mbox{ and }
\frac{1}{2}(A+A^{\ast })-\rho G^{\ast }G\leq -\delta
\mathrm{id}.
\end{equation*}%
If $L:=\rho G^{\ast },$ $\eta :=\min\{(6||A||)^{-1}\varepsilon ,(3||LG||)^{-1}\varepsilon \},
$ and $\beta :=\delta -r^{-1}(m||A||+\varepsilon ),$ then for every $z_{0}\in
\mathrm{B}(x_{0},\eta )$ we have that, for all $t\geq 0,$
\begin{equation*}
||\hat{x}(t;z_{0})-x(t;x_{0})||\leq e^{\frac{-\beta t}{2}}||z_{0}-x_{0}||.
\end{equation*}
\end{corollary}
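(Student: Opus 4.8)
The plan is to recognize this corollary as the specialization of Theorem \ref{prop.6.2} to the linear setting, so that the proof consists of checking that the structural hypotheses of that theorem are met with the present data $\mathcal{G}=G$, $f=A$, $\mathcal{L}=L=\rho G^{\ast}$ (which is a bounded linear operator, hence Lipschitz), and the stated constants $\eta,\delta,\varepsilon$. First I would record the identifications forced by linearity: the Lipschitz constant of $f=A$ is $\kappa=\Vert A\Vert$; the quantity $M:=\sup\{\Vert f(x)\Vert:x\in\mathrm{B}(0,m)\}$ equals $m\Vert A\Vert$; and $\mathcal{L}\circ\mathcal{G}=\rho G^{\ast}G=LG$. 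In particular the exponent $\beta=\delta-\frac{M+\varepsilon}{r}$ produced by Theorem \ref{prop.6.2} coincides with $\beta=\delta-r^{-1}(m\Vert A\Vert+\varepsilon)$ in the statement.

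Next I would verify the four hypotheses of Theorem \ref{prop.6.2} in turn. The requirement $\varepsilon<\delta r-M$ is exactly the assumed inequality $r^{-1}(m\Vert A\Vert+\varepsilon)<\delta$. The requirement $\eta\le(6\kappa)^{-1}\varepsilon=(6\Vert A\Vert)^{-1}\varepsilon$ holds because $\eta$ is defined as a minimum containing this term. For condition (\ref{6.105}): if $\Vert x-y\Vert\le 3\eta$, then by linearity $\Vert\mathcal{L}(\mathcal{G}(x))-\mathcal{L}(\mathcal{G}(y))\Vert=\Vert LG(x-y)\Vert\le 3\eta\Vert LG\Vert\le\varepsilon$, where the last inequality uses the other term $\eta\le(3\Vert LG\Vert)^{-1}\varepsilon$ in the definition of $\eta$. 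Finally, for condition (\ref{6.106}), linearity gives, for arbitrary $x,y\in H$,
\[
\langle x-y,(f-\mathcal{L}\circ\mathcal{G})(x)-(f-\mathcal{L}\circ\mathcal{G})(y)\rangle=\big\langle x-y,(A-\rho G^{\ast}G)(x-y)\big\rangle=\Big\langle x-y,\big(\tfrac12(A+A^{\ast})-\rho G^{\ast}G\big)(x-y)\Big\rangle,
\]
which is $\le-\delta\Vert x-y\Vert^{2}$ by the assumed operator inequality; a fortiori this holds for all $x,y\in\mathrm{B}(0,m+3\eta)$.

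Having checked every hypothesis, the conclusion $\Vert\hat{x}(t;z_{0})-x(t;x_{0})\Vert\le e^{-\beta t/2}\Vert z_{0}-x_{0}\Vert$ for all $t\ge 0$ and every $z_{0}\in\mathrm{B}(x_{0},\eta)$ is simply the conclusion of Theorem \ref{prop.6.2} transcribed to this case. I do not expect a genuine obstacle here; the only points demanding attention are bookkeeping: confirming $M=m\Vert A\Vert$ and $\kappa=\Vert A\Vert$, matching all the constants, and noting that hypothesis (\ref{6.106})---stated on the ball $\mathrm{B}(0,m+3\eta)$ in Theorem \ref{prop.6.2}---collapses here to a single operator inequality on the symmetric part of $A-LG$, precisely because $A-LG$ is linear, which is exactly what the matrix condition $\tfrac12(A+A^{\ast})-\rho G^{\ast}G\le-\delta\,\mathrm{id}$ supplies.
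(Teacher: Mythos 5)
Your proposal is correct and follows essentially the same route as the paper: the paper's own proof simply invokes the argument of Theorem \ref{prop.6.2} together with the symmetrization identity $\langle x,(A-LG)x\rangle=\tfrac12\langle x,\bigl((A-LG)+(A^{\ast}-G^{\ast}L^{\ast})\bigr)x\rangle$, which is exactly how you reduce condition (\ref{6.106}) to the operator inequality $\tfrac12(A+A^{\ast})-\rho G^{\ast}G\leq-\delta\,\mathrm{id}$. Your verification of the remaining hypotheses ($\kappa=\Vert A\Vert$, $M=m\Vert A\Vert$, the two terms in the definition of $\eta$ giving $\eta\leq(6\kappa)^{-1}\varepsilon$ and condition (\ref{6.105})) is the same bookkeeping the paper leaves implicit.
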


\begin{proof}
The proof is similar as the one\ of Theorem\ \ref{prop.6.2}, by observing
that for every $x\in H$, we have
\begin{equation*}
\big\langle x,(A-LG)x\big\rangle =\frac{\langle x,(A-LG)x \rangle +\langle
x,(A^{\ast }-G^{\ast }L^{\ast })x\rangle }{2}.
\end{equation*}
\end{proof}
\section{Concluding remarks}

In this paper, we proved that a differential variational inequality
involving a prox-regular set can be equivalently written as a differential
inclusion governed by a maximal monotone operator. Therefore, the existence
result and the stability analysis can be conducted in a classical way. We
also give a characterization of lower semi-continuous $a$-Lyapunov pairs and
functions. An application to a Luenberger-like observer is proposed. These
new results will open new perspectives from both the numerical and
applications points of view. An other interesting problem dealing with
sweeping processes was introduced by J.J. Moreau in the seventies, which is
of a great interest in applications. This problem is obtained by replacing
the fixed set $C$ by a moving set $C(t),\;t\in [0,T]$. It will be
interesting to extend the ideas developed in this current work to the
sweeping process involving prox-regular sets. Many other issues require
further investigation including the study of numerical methods for problem (%
\ref{101}) and the extension to second-order dynamical systems. This is out
of the scope of the present paper and will be the subject of a future
project of research.

\end{document}